\newtheorem{theorem}{Теорема}[section]
\newtheorem{lemma}{Лемма}[section]
\numberwithin{equation}{section}
\def\ov{\overline}
\def\wt{\widetilde}
\def\EE {{\mathbf E}\,}
\def\le{\leqslant}
\def\ge{\geqslant}
\renewcommand{\Im}{\mathop{\mathrm{Im}}\nolimits}
\begin{document}
\title[Принцип больших уклонений]
{Large deviation principle for moderate deviation probabilities of bootstrap empirical measures }
\author{M. S. Ermakov}
\address{Mechanical Engineering Problems Institute RASc\\
Bolshoy pr. V.O., 61\\
199178 St.Petersburg, Russia\\
Saint-Petersburg State University\\
University pr. 28, Petrodvoretz,\\
198504, St.-Petersburg, Russia}
\email{erm2512@mail.ru}
\date{12 november 2012.}
\keywords{large deviation principle, moderate deviations, bootstrap, empirical measure}

 \thanks{Paper was supported RFFI Grant 11-01-00769-а}


\maketitle

\section{Introduction}
Large deviation principle (LDP) for i.i.d.r.v.'s  ( \cite{ar, bor, ac, dem, ei02, er95,  gor, san}) allows
to study a large number of different problems on large deviation probabilities of statistics. LDP for the bootstrap empirical measures has been studied not in such a large number of papers ( \cite{ch96} and \cite{ch97}).
The goal of the paper is to prove LDP for the conditional distribution of bootstrap empirical measure given empirical measure and similar LDP for the common distribution of bootstrap empirical measure and empirical measure.  To simplify the terminology these LDP will be called moderate deviation principles (MDP) (see \cite{ar}). MDP for the conditional distribution of bootstrap empirical  measure  given empirical measure will be called the conditional principle of moderate deviation probabilities.

For  bootstrap sample means the conditional LDP  has been established in \cite{li}. For bootstrap empirical measures such a version of LDP has been proved in \cite{ch96}.  The strong asymptotics of moderate deviation probabilities of bootstrap sample means have been studied in \cite{gu} and \cite{wo}.

The interest to the problem under consideration is caused the following reasons.

The conditional MDP for bootstrap empirical measures holds for significantly wider zones of moderate deviation probabbilities than MDP for empirical measures. Thus the normal approximation for differentiable statistical functionals depending on bootstrap empirical measures works for significantly wider zones than the normal approximation for the functionals depending on empirical measures.
 At the same time MDP for the common distributions of  bootstrap empirical measures and empirical measures holds for a much narrower zone of moderate deviation probabilities than MDP for empirical probability measures. This result shows significant instability of bootstrap if the empirical probability measure lies in the moderate deviation zone.

As have been shown in
 \cite{er95} and \cite{gao}, MDP for empirical measures and  technique of differentiation in functional spaces allow to establish MDP for differentiable statistical functionals. It turns out that this technique works to the same extent as in the proof of asymptotic normality \cite{we}. The paper allows to obtain similar results for differentiable functionals depending on the bootstrap empirical measures.

Suppose that

--- $S$ is Hausdorff topological space;

--- $\EuScript{F}$ is $\sigma$-field of Borel sets on $S$;

--- $\Lambda$ is the set of all probability measures on
$(S,\EuScript{F})$.

Let $X_1,\ldots,X_n$ be independent identically distributed random variables having probability measure
 ${\mathbf P}\in \Lambda$.

Denote $\widehat{{\mathbf P}}_n$ empirical measure of
$X_1,\ldots,X_n$.

In 1979, in a landmark paper Efron \cite{ef1} proposed to analyze the distributions of statistics $V(X_1,\ldots,X_n)$  with the help of the bootstrap
procedure.
In the bootstrap  we consider the empirical measure $\widehat {\mathbf P}_n$ as
an estimator of the probability measure (pm) ${\mathbf P}$  and simulate the
distribution of statistics $V(X_1,\ldots,X_n)$ on the base of
pm $\widehat {\mathbf P}_n$. In other words, we simulate independent copies
$(X^*_{1i},\ldots,X^*_{ni})_{i\in[1,k]}$ of i.i.d random variables
such that $X^*_{1i}$ is distributed  according to
$\widehat {\mathbf P}_n$.  After that the empirical distribution of
$(V(X^*_{1i},\ldots,X^*_{ni}))_{i\in[1,k]}$ is postulated as an
estimate of the distribution of $V(X_1,\ldots,X_n)$.

It is
of  interest to estimate  large and moderate
deviation probabilities of $V(X_1,\ldots,X_n)$.
Such problems emerge constantly in  confidence estimation and
hypothesis testing. The significant levels in the confidence
estimation and the type I error probabilities in  hypothesis testing are (usually)
of small values and thus are compatible with LDP - MDP
analysis. Hence it appears natural to
compare $V(X_1,\ldots,X_n)$ and $V(X^*_1,\ldots,X_n^*)$ in terms
of LDP - MDP approach.

\medskip
In this paper  we carry out such an MDP based comparisons  in the following setup.

We represent $V(X_1,\ldots,X_n)$ and $V(X^*_1,\ldots,X^*_n)$  as
functionals of $\widehat{{\mathbf P}}_n$ and $ {\mathbf P}^*_n$ respectively,  where $ {\mathbf P}^*_n$ is the
empirical probability measure of $X^*_1,\ldots,X^*_n$ called the bootstrap empirical measure, i.e.
 $$
\begin{aligned}
V(X_1,\ldots,X_n)&= T(\widehat{{\mathbf P}}_n),
\\
V(X^*_1,\ldots,X^*_n)&= T({\mathbf P}^*_n).
\end{aligned}
$$
Thus we reduce the problem to the study of moderate deviation probabilities of
 $T({\mathbf P}^*_n) -T(\widehat{{\mathbf P}}_n)$.

 The paper is organized as follows.

 Theorems \ref{t2.5} and \ref{t2.6} on conditional MDP for the bootstrap empirical measures are provided in section 2.  Theorem \ref{t2.5} states that the conditional MDP holds almost surely. Theorem \ref{t2.6} explores rates of convergence in the conditional MDP.  The results were established in terms of the $\tau_\Theta$ - topology allowing to study moderate deviation probabilities for unbounded statistical functionals.

In section 3 MDP for the common distribution of empirical measures and bootstrap empirical measures are provided. The example given in section 3 shows that the weak $\tau_\Phi$-topology considered in this MDP could not be improved significantly.

 In section  4 we discuss the extensions of these results on the case of differentiable statistical functionals. We show that the technique developed in \cite{er95} and \cite{gao} can be also implemented for the bootstrap setup. In particular MDP for the bootstrap empirical quantile processes and the bootstrap empirical copula functions are provided.

 Probabilities of moderate deviations of statistics have been studied in numerous works (see \cite{al, ar6, in92, jur} and references therein).
Last time this problem was explored in terms of differentiable statistical functionals  \cite{er95, gao}.

In sections 5, 6 and 7 the proofs of Theorems of sections 2 and 3 are provided.

We shall implement the following notations:
\smallskip

-- ${\mathbf Q} << {\mathbf P}$, if ${\mathbf Q} \in \Lambda$ is absolutely continuous with respect to ${\mathbf P} \in \Lambda$;

\smallskip
-- ${\mathbf Q}_2 \times {\mathbf Q}_1$ -- the Cartesian product of probability measures
${\mathbf Q}_2, {\mathbf Q}_1 \in \Lambda$;
\smallskip

-- $\Lambda^2 = \Lambda \times \Lambda$ denote the set of all probability measures
 ${\mathbf Q}_2 \times {\mathbf Q}_1$ with ${\mathbf Q}_2, {\mathbf Q}_1 \in \Lambda$;
\smallskip

-- $C$, $c$ -- positive constants;
\smallskip

-- $\chi(A)$ -- indicator of event $A$;
\smallskip

-- $\int$ denote always $\int\limits_S.$

\section{Conditional MDP for bootstrap empirical measures}

\subsection{The $\tau_\Sigma$-topologies}
Let $\Sigma$ be a set of functions $f: S \to R^1$
such that $\EE[|f(X)|] < \infty$. We suppose that $\Sigma$ contains the set of all bounded functions.

Denote
\begin{equation*}
\Lambda_\Sigma= \left\{{\mathbf P} \in \Lambda: \int |f(X)|d{\mathbf P}<\infty, \quad
f \in \Sigma \right\}.
\end{equation*}
Topology of weak convergence in $\Lambda_\Sigma$ providing the
continuous mapping
$$
{\mathbf Q}\Rightarrow\int f\,d{\mathbf Q}\ \mbox{ for all } \ f\in\Sigma, \ {\mathbf Q}\in\Lambda_\Sigma,
$$
is known as the $\tau_\Sigma$-topology (henceforth,  all topological concepts
 refer to the
 $\tau_\Sigma$-topology). Denote $\sigma_\Sigma$ the smallest $\sigma$-field
that makes all  these mapping measurable.
For any set $\Omega \subset \Lambda_\Sigma$ the notations:
$\mathfrak{cl}(\Omega)$ and $\mathfrak{int}(\Omega)$ are used for the
closure and the interior of $\Omega$ respectively.

For the set $\Sigma=\Theta_0$ of all bounded measurable functions, the
 $\tau_{\Theta_0}$-topology is called the
 $\tau$-topology (see \cite{dem, ei02, gor}).
 Define the set
  $\Lambda_{0\Sigma}$ of all signed measures ${\mathbf G}, {\mathbf G}(S)=0,$ having bounded variation and such that
$$
\int |f| d\, |{\mathbf G}| < \infty.
$$
The measure $|{\mathbf G}|$ is defined as follows. For any set $A\in \EuScript{F}$ , $|{\mathbf G}|(A)$ is variation of set $A$ for signed measure ${\mathbf G}$.

 The
 $\tau_\Sigma$-topology in $\Lambda_{0\Sigma}$  is defined by a standard way.
The definitions of
 $\mathfrak{cl}(\Omega_0)$ and
$\mathfrak{int}(\Omega_0), \Omega_0 \subset \Lambda_{0\Sigma},$ are also standard.

\subsection{Rate function} %
For ${\mathbf G} \in \Lambda_0$, let
$$
\rho^2_0 ({\mathbf G},{\mathbf P})=
 \begin{cases}

\frac{1}{2} \int \left(\frac{d{\mathbf G}}{d{\mathbf P}}\right)^2 d{\mathbf P} , & {\mathbf G}\ll {\mathbf P},
\\
\infty, & \text{otherwise} .
 \end{cases}
$$
be the rate function (in statistical terms, $2\rho_0^2(G|P)$ is
the Fisher information) which arises naturally  in the MDP
analysis of empirical measures $\hat{P}_n$ (see  \cite{bor};  \cite{gao},  \cite{ar} and
  \cite{er95} ).

For the set $\Omega_0 \subset \Lambda_{0\Sigma}$ denote
$$
\rho^2_0 (\Omega_0,{\mathbf P})= \inf\{\rho^2_0 ({\mathbf G},{\mathbf P}), {\mathbf G} \in \Omega_0\}.
$$

\subsection{Outer and inner probabilities}
The empirical distribution function is not measurable (see \cite{ac, gao, leo}).
By this reason, the results will be given in terms of outer and inner probabilities. Let $(\Upsilon, \EuScript{F} ,{\mathbf P})$ be probability space.
The outer probability of set $B \subset \Upsilon$ equals

$$
( {\mathbf P})^*(B) = \inf\{{\mathbf P}(A); B \subseteq A, A \in \Im\},
$$
and its inner probability equals
$( {\mathbf P})_{*}(B)=1 - ( {\mathbf P})^*(\Lambda_{0\Sigma}\setminus B)$.

For a sequence of random variables $Z_n:\Upsilon \to R^1$ ($Z_n$ are not necessary measurable
) we say that $\liminf_{n\to\infty} Z_n \ge c $
inner almost surely ($a.s_*$), if there are measurable random variables  $\Delta_n$,
such that $\Delta_n \le Z_n$ and ${\mathbf P}(\liminf\limits_{n\to\infty} \Delta_n \ge c) =1$.

We say that $\limsup\limits_{n\to\infty} Z_n \le c$ inner almost surely  ($a.s^*.$)  if \break $\liminf\limits_{n\to\infty} -Z_n \ge -c \,\, a.s_*$.

We say that $\limsup\limits_{n\to\infty} Z_n = -\infty$ outer almost surely  ($a.s^*.$) if $\liminf\limits_{n\to\infty} -Z_n \ge -c \,\, a.s_*$ for all $c>0$.

\subsection{Conditional moderate deviation principle for bootstrap empirical measures}
Theorem  \ref{t2.5} given below shows that  MDP holds almost surely (a.s.)
for the conditional distribution of bootstrap empirical
measure given  empirical probability measure.  In this setup we allow  the sample
size $k=k_n$ of the bootstrap   to have  values different from $n$.

The results will be provided in terms of  the $\tau_\Theta$-topologies.

 For each $t> 2$
define the set $\Theta=\Theta_t$ of real functions $f: S \to R^1$ such that
 $E[|f(X)|^t] < \infty$.

 For decreasing function $h :
R^1_+ \to R^1_+ $ and $t \ge 2$ define the set $\Theta=\Theta_{t,h}$ of real functions $f$
such that
\begin{equation}\label{2.13}
P(|f(X)| > s^{-1}) < h(s), \quad s>0
\end{equation}
and
\begin{equation}\label{2.14}
E[|f(X)|^t] < \infty.
\end{equation}

Let
$X^*_1,\ldots,X_{k_n}^*$ be i.i.d.r.v.'s having pm $\hat P_n$.  Denote $P^*_{k_n}$ the
empirical probability measure of  $X_1^*,\ldots,X^*_{k_n}$.
Suppose that $\frac{k_n}{n} < c <\infty$ and $k_n \to \infty$
as $n \to \infty$.
\begin{theorem}\label{t2.5} Let a decreasing sequence $a_n>0, a_n \to 0, a_{n+1}/a_n\to 1,
k_na_n^2 \to \infty$ as $n \to \infty$ be provided. Let
\begin{equation}\label{qqq9}
\sum_{n=1}^\infty h(ca_n) < \infty
\end{equation}
 for all
$c>0$.

Let $\Omega_0 \subset\Lambda_{0\Theta_{2,h}}$. Then
  there hold
 \begin{equation}\label{2.15a}
\liminf_{n\to\infty}(k_na_n^2)^{-1}\ln (\widehat {\mathbf P}_n)_{*} ({\mathbf P}^*_{k_n} \in \widehat {\mathbf P}_n + a_n\Omega_0) \ge
-\rho_0^2 (\mathfrak{int}(\Omega_0),{\mathbf P}) \quad a.\,s_*
\end{equation}
 and
\begin{equation}\label{2.16a}
\limsup_{n\to\infty}(k_na_n^2)^{-1}\ln (\widehat {\mathbf P}_n)^* ({\mathbf P}^*_{k_n}
\!\in\! \widehat {\mathbf P}_n \!+\! a_n\Omega_0)\! \le\!
-\rho_0^2 (\mathfrak{cl}(\Omega_0),{\mathbf P}) \quad a.\,s^*,
\end{equation}
where the closure and the interior of the set $\Omega_0$ in
{\rm(\ref{2.15a})} and {\rm(\ref{2.16a})} are considered with respect to $\tau_{\Theta_{2,h}}$
-topology. The outer probability measure $(\widehat {\mathbf P}_n)^{*}$  and the inner probability measure $(\widehat {\mathbf P}_n)_{*}$ are considered with respect to $\sigma_{\Theta_{2,h}}$-algebra.

Let $\Omega_0 \subset\Lambda_{0\Theta_{t}}$,
$t>2$ and let $a_n = o(n^{-1/t})$. Then
{\rm(\ref{2.15a})} and {\rm(\ref{2.16a})} are valid if $\mathfrak{int}(\Omega_0)$ and $\mathfrak{cl}(\Omega_0)$ are considered with respect to $\tau_{\Theta_{t}}$-topology. Outer probability measure $(\widehat {\mathbf P}_n)^{*}$ and inner probability measure $(\widehat {\mathbf P}_n)_{*}$ are considered with respect to $\sigma_{\Theta_{t}}$-algebra.
\end{theorem}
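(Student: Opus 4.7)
The plan is to condition on the empirical measure $\widehat{\mathbf{P}}_n$ so that $X_1^*,\ldots,X_{k_n}^*$ becomes an i.i.d.\ sample from the random base law $\widehat{\mathbf{P}}_n$, establish a conditional Cram\'er-type MDP with the random rate function $\rho_0^2(\cdot,\widehat{\mathbf{P}}_n)$, and then replace this rate function by $\rho_0^2(\cdot,\mathbf{P})$ using the strong law of large numbers applied to second moments of test functions $f\in\Theta$. Because $\tau_\Theta$-open and closed sets are generated by the evaluations $\mathbf{G}\mapsto\int f\,d\mathbf{G}$, both bounds reduce to finite-dimensional estimates on the random vector $(\mathbf{P}^*_{k_n}(f_j)-\widehat{\mathbf{P}}_n(f_j))_{j=1}^m$ for finitely many $f_1,\ldots,f_m\in\Theta$; the limiting rate on $\mathbb{R}^m$ is the Gaussian quadratic form built from the covariance $\Sigma_{ij}=\int f_if_j\,d\mathbf{P}-\int f_i\,d\mathbf{P}\int f_j\,d\mathbf{P}$, and by duality this recovers $\rho_0^2(\cdot,\mathbf{P})$.

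For the upper bound \eqref{2.16a}, I would apply the conditional exponential Chebyshev inequality. The conditional log moment generating function $k_n\log\widehat{\mathbf{P}}_n[\exp(a_n\lambda\cdot f)]$ expands to second order as $\tfrac12 k_na_n^2\lambda^{T}\widehat{\Sigma}_n\lambda$ plus a remainder of order $k_na_n^3\widehat{\mathbf{P}}_n(|f|^3)$. Since $\widehat{\Sigma}_n\to\Sigma$ almost surely by the strong law, it suffices to make the remainder negligible. This is where conditions \eqref{2.13}--\eqref{2.14} together with the summability $\sum h(ca_n)<\infty$ enter: they allow truncation of $f$ at level $a_n^{-1}$ via a Borel--Cantelli argument, after which $a_n\widehat{\mathbf{P}}_n(|f|^3)\le \widehat{\mathbf{P}}_n(f^2)\to\mathbf{P}(f^2)$ so that the remainder is $o(k_na_n^2)$. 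In the $\Theta_t$ regime, the role of the $h$-truncation is played by $a_n=o(n^{-1/t})$ together with Markov's inequality.

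For the lower bound \eqref{2.15a}, I fix $\mathbf{G}_0\in\mathfrak{int}(\Omega_0)$ with $\rho_0^2(\mathbf{G}_0,\mathbf{P})<\infty$ (else the bound is trivial) and set $\phi=d\mathbf{G}_0/d\mathbf{P}$. Perform a conditional change of bootstrap law from $\widehat{\mathbf{P}}_n$ to a tilted $\widetilde{\mathbf{P}}_n$ with density proportional to $1+a_n\phi$ (truncated as above). Under $\widetilde{\mathbf{P}}_n$ a conditional LLN shows $\mathbf{P}^*_{k_n}(g)-\widehat{\mathbf{P}}_n(g)\to a_n\widehat{\mathbf{P}}_n(g\phi)$, which tends almost surely to $a_n\mathbf{G}_0(g)$; so the tilted probability of $\{\mathbf{P}^*_{k_n}\in\widehat{\mathbf{P}}_n+a_n U\}$ for a neighborhood $U$ of $\mathbf{G}_0$ tends to one. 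The conditional log-likelihood ratio has $\widetilde{\mathbf{P}}_n$-expectation $-\tfrac12 k_na_n^2\widehat{\mathbf{P}}_n(\phi^2)\to -k_na_n^2\rho_0^2(\mathbf{G}_0,\mathbf{P})$ a.s., and a variance bound confirms this value is typical; substituting the Radon--Nikodym density yields \eqref{2.15a}.

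The principal difficulties I expect are: upgrading these exponential-moment bounds to almost-sure statements, which I would handle by applying Borel--Cantelli along a geometric subsequence and then using $a_{n+1}/a_n\to 1$ together with the monotonicity of $a_n$ to interpolate for all $n$; managing the non-measurability of $\widehat{\mathbf{P}}_n$ and $\mathbf{P}^*_{k_n}$ as points of $\Lambda_\Theta$ by keeping the finite-dimensional projections $\sigma_\Theta$-measurable and only invoking outer/inner probabilities at the final step; and obtaining uniformity of the third-moment remainder over the uncountable class $\Theta_{t,h}$, which I would resolve by passing to a countable subclass sufficient to generate $\sigma_{\Theta_{t,h}}$ and to describe all $\tau_{\Theta_{t,h}}$-open neighborhoods appearing in the definitions of $\mathfrak{int}(\Omega_0)$ and $\mathfrak{cl}(\Omega_0)$.
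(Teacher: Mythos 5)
Your overall strategy lines up well with the paper's: condition on $\widehat{\mathbf{P}}_n$, reduce to finitely many test functions $f\in\Theta$, expand the conditional exponential moments of $k_n^{-1}\sum f(X_i^*)$, control the higher-order remainder by truncating at level $\delta a_n^{-1}$, and replace the random variance $s_n^2(f)$ by $\sigma^2(f)$ via the strong law. The paper's proof is built on top of Theorem~\ref{t2.6}, whose finite-dimensional estimate comes from a Saulis--Statulevi\v{c}ius exponential bound (its Theorem~\ref{t6}) rather than a generic Cram\'er expansion, and its lower bound uses an explicit simplex construction (Lemma~\ref{l5.2}) rather than a tilting argument; your conditional change of measure is a legitimate alternative to that construction and yields the same Gaussian rate. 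The truncation hypotheses $\sum h(ca_n)<\infty$ and $a_n=o(n^{-1/t})$ enter in exactly the way you say, via a Borel--Cantelli-type bound on $\max_{1\le s\le n}|f(X_s)|a_n$ and the SLLN $s_n^2(f)\to\sigma^2(f)$.

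The one step that will not go through as written is the claim that uniformity over $\Theta$ can be handled by ``passing to a countable subclass sufficient to generate $\sigma_{\Theta_{t,h}}$ and to describe all $\tau_{\Theta_{t,h}}$-open neighborhoods appearing in the definitions of $\mathfrak{int}(\Omega_0)$ and $\mathfrak{cl}(\Omega_0)$.'' The $\tau_\Theta$-topology is not first or second countable, $\Omega_0$ is arbitrary, and its $\tau_\Theta$-interior and closure are in general not determined by any countable family of evaluations. What actually makes the reduction to finitely many test functions legitimate is not countability but compactness of the rate-function level sets $\Gamma_{0r}=\{\mathbf{G}:\rho_0^2(\mathbf{G},\mathbf{P})\le r\}$ in the $\tau_\Theta$-topology (the content of Lemma~\ref{l5.1}). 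For the upper bound, $\Gamma_{0,\eta-\delta}$ is a $\tau_\Theta$-compact set disjoint from $\mathfrak{cl}(\Omega_0)$, so it admits a finite subcover by basic neighborhoods, each cut out by finitely many functions; the complement of the union of these neighborhoods contains $\mathfrak{cl}(\Omega_0)$, and the exponential Chebyshev estimate then runs on just those finitely many $f$'s. For the lower bound, a single interior point together with a single finite-dimensional neighborhood suffices. Once this is observed, the ``uniformity of the third-moment remainder'' you worry about is a non-issue, because the remainder need only be controlled for finitely many functions at a time.

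A smaller point: in the $\Theta_t$ regime, plain Markov plus Borel--Cantelli along the full sequence is insufficient, since $a_n=o(n^{-1/t})$ gives only $\mathbf{P}(|f(X_n)|>\delta a_n^{-1})=o(1/n)$, which need not be summable. The paper instead bounds $\mathbf{P}(\max_{i>k}a_i|f(X_i)|>\delta)$ by a rearrangement that leads to a tail integral of the form $\int_{a_k^{-1}}^\infty x^t\,dF(x)\to 0$; your geometric-subsequence idea can be made to work but needs this type of refinement rather than bare Markov.
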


\subsection{Rates of convergence in conditional moderate deviation principle}
\begin{theorem}\label{t2.6} Let a decreasing sequence $a_n>0, a_n \to 0, a_{n+1}/a_n\to 1,
k_na_n^2 \to \infty$ as $n \to \infty$ be given. Let function $h :
R^1_+ \to R^1_+ $ be such that
\begin{equation}\label{qqq5}
\lim_{n\to\infty} nh(c a_n) = 0
\end{equation}
 for each $c>0$.
Let $\Omega_0 \subset\Lambda_{0\Theta_{t,h}}$, $t>2$. Then
 for any
 $$
 \epsilon > 0\ \mbox{ and } \ n > n_0(\epsilon,\{k_i\}_{i=1}^\infty,\Omega_0)
 $$
 there hold
\begin{equation}\label{2.15}
(k_na_n^2)^{-1}\log (\widehat {\mathbf P}_n)_{*} ({\mathbf P}^*_{k_n} \in \widehat {\mathbf P}_n + a_n\Omega_0) \ge
-\rho_0^2 (\mathfrak{int}(\Omega_0),{\mathbf P})- \epsilon
\end{equation}
 and, if $\rho_0^2 (\mathfrak{cl}_{\Theta_{t,h}}(\Omega_0),{\mathbf P}) <\infty$ additionally, then
\begin{equation}\label{2.16}
(k_na_n^2)^{-1}\log (\widehat {\mathbf P}_n)^* ({\mathbf P}^*_{k_n} \in \widehat {\mathbf P}_n + a_n\Omega_0) \le
-\rho_0^2 (\mathfrak{cl}(\Omega_0),{\mathbf P}) + \epsilon
\end{equation}
 on the sets of events having the inner  probabilities more than
$\kappa_n = \kappa_n(\epsilon,\Omega_0)=1 -
C(\epsilon,\Omega_0)[\beta_{1n}+\beta_{2n}]$ where
$\beta_{1n}=n h(\frac{a_n}{\epsilon C_1(\epsilon,\Omega_0)})$ and
$\beta_{2n}= C_2(\epsilon,\Omega_0) n^{1-t/2}$.

If $\rho_0^2 (\mathfrak{cl}(\Omega_0),{\mathbf P}) =\infty$, then for any $L>0$
\begin{equation}\label{2.16ab}
(k_na_n^2)^{-1}\log (\widehat {\mathbf P}_n)^* ({\mathbf P}^*_{k_n} \in \widehat {\mathbf P}_n + a_n\Omega_0) \le
-L
\end{equation}
on the sets of events having the inner  probabilities more than
$\kappa_{1n} = \kappa_{1n}(L,\Omega_0)=1 -
C(L,\Omega_0)[\beta_{1n}+\beta_{2n}]$ with
$\beta_{1n}=n h(\frac{a_n}{ C_1(L,\Omega_0)})$ and
$\beta_{2n}= C_2(L,\Omega_0) n^{1-t/2}$.
\end{theorem}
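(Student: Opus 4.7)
\smallskip

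\noindent\textbf{Proof sketch.} The strategy is to view this as a quantitative refinement of Theorem \ref{t2.5}: condition on the empirical measure $\widehat{\mathbf P}_n$ and reduce the problem to a standard MDP for i.i.d.\ sums drawn from $\widehat{\mathbf P}_n$, while carrying along explicit bounds on the measure of the exceptional sets on which the approximation fails. The two terms $\beta_{1n}=nh(a_n/(\epsilon C_1))$ and $\beta_{2n}=C_2 n^{1-t/2}$ are exactly the probabilities of the two kinds of failure that arise below.

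\smallskip

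\noindent\emph{Upper bound \eqref{2.16}.} By the definition of the $\tau_{\Theta_{t,h}}$-topology, any closed set $\mathfrak{cl}(\Omega_0)$ with $\rho_0^2(\mathfrak{cl}(\Omega_0),{\mathbf P})<\infty$ can, after an arbitrarily small enlargement, be written as a finite intersection of half-spaces $\{{\mathbf G}:\int f_j\,d{\mathbf G}\le c_j\}$ with $f_j\in\Theta_{t,h}$; a finite $\epsilon$-grid on the relevant dual functions reduces \eqref{2.16} to estimating, conditionally on $\widehat{\mathbf P}_n$, a finite collection of scalar probabilities $\PP(\int f_j\,d({\mathbf P}^*_{k_n}-\widehat{\mathbf P}_n)>a_nc_j\mid\widehat{\mathbf P}_n)$. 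For each such $f=f_j$ I would truncate at level $T_n=C_1/a_n$, writing $f=f^{[T_n]}+f^{(T_n)}$. The untruncated part contributes at most $\widehat{\mathbf P}_n\{|f|>T_n\}$, and since $\widehat{\mathbf P}_n\{|f|>T_n\}=0$ whenever every $|f(X_i)|\le T_n$, this holds outside an event of probability at most $n\PP(|f(X)|>T_n)\le nh(a_n/C_1)=\beta_{1n}$. Conditionally on $\widehat{\mathbf P}_n$ and on this event, the truncated partial sums satisfy a standard Cram\'er/exponential Chebyshev bound yielding $\exp(-k_na_n^2c_j^2/(2\sigma^2_{j,n})(1+o(1)))$, where $\sigma^2_{j,n}=\mathrm{Var}_{\widehat{\mathbf P}_n}(f^{[T_n]})$. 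Finally, to replace $\sigma^2_{j,n}$ by the true variance and thus by the correct rate $\rho_0^2$, I would apply a Marcinkiewicz-type moment inequality: $\PP(|\widehat{\mathbf P}_n f^2-{\mathbf P} f^2|>\epsilon)\le C\epsilon^{-t}n^{-t/2}\EE|f|^t$, and a union bound over the finitely many $f_j$ gives $\beta_{2n}=C_2n^{1-t/2}$. The case $\rho_0^2(\mathfrak{cl}(\Omega_0),{\mathbf P})=\infty$ yielding \eqref{2.16ab} is obtained from the same machinery, replacing the half-space description by one with arbitrarily large $c_j$.

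\smallskip

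\noindent\emph{Lower bound \eqref{2.15}.} Pick $\widetilde{\mathbf G}\in\mathfrak{int}(\Omega_0)$ with $\rho_0^2(\widetilde{\mathbf G},{\mathbf P})<\rho_0^2(\mathfrak{int}(\Omega_0),{\mathbf P})+\epsilon/2$ and let $g=d\widetilde{\mathbf G}/d{\mathbf P}$. Choosing a neighborhood of $\widetilde{\mathbf G}$ inside $\Omega_0$ cut out by finitely many functions $f_1,\dots,f_m\in\Theta_{t,h}$, I would carry out a classical Cram\'er-type change of measure at the level of the bootstrap sample, with tilting density $1+a_ng$ against $\widehat{\mathbf P}_n$. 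After the same truncation/variance-control step as above, which works on a set of inner probability at least $1-C(\beta_{1n}+\beta_{2n})$, the tilted probability assigns mass at least $\exp(-k_na_n^2\rho_0^2(\widetilde{\mathbf G},\widehat{\mathbf P}_n)(1+o(1)))$ to the chosen neighborhood, and the same moment estimate guarantees $|\rho_0^2(\widetilde{\mathbf G},\widehat{\mathbf P}_n)-\rho_0^2(\widetilde{\mathbf G},{\mathbf P})|<\epsilon/2$ on the same good set.

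\smallskip

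\noindent\emph{Main obstacle.} The delicate part is not any single estimate but the bookkeeping: one must design a single ``good event'' $G_n$, defined in terms of only finitely many functions in $\Theta_{t,h}$, on which simultaneously (i) no observation $X_i$ lies in the $h$-tail region, (ii) all relevant second moments $\widehat{\mathbf P}_n f_j^2$ are close to ${\mathbf P} f_j^2$, and (iii) the resulting measurable outer sets still cover $\{{\mathbf P}^*_{k_n}\in\widehat{\mathbf P}_n+a_n\Omega_0\}$ up to negligible terms, while the probability of $G_n^c$ is exactly absorbed by $\beta_{1n}+\beta_{2n}$. Checking measurability and aligning the topology of $\Omega_0$ with the truncation level $C_1/a_n$ is what forces the precise form of $C_1(\epsilon,\Omega_0)$ and $C_2(\epsilon,\Omega_0)$ appearing in the statement.
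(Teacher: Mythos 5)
Your sketch is correct in outline, and the two failure probabilities $\beta_{1n}$ and $\beta_{2n}$ are identified exactly as in the paper: $\beta_{1n}=nh(\cdot)$ comes from the event that some $|f(X_i)|$ exceeds the truncation level $c\,a_n^{-1}$, and $\beta_{2n}=Cn^{1-t/2}$ comes from a Marcinkiewicz--Zygmund (Petrov, Theorem 28) bound on $|s_n^2(f)-\sigma^2(f)|$. The paper also conditions on $\widehat{\mathbf P}_n$, restricts to this same good event, and then applies a single quantitative one-dimensional moderate deviation theorem (Saulis--Statulevi\v{c}ius, stated as Theorem~\ref{t6}) rather than re-deriving the Cram\'er estimate and the tilting bound by hand. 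Where you genuinely diverge is in the geometry of the reduction. For the upper bound the paper does not enlarge $\mathfrak{cl}(\Omega_0)$; instead it uses that the level set $\Gamma_{0,\eta-\delta}$ of the rate function is $\tau_\Theta$-compact and convex (Lemma~\ref{l5.1}), covers it by finitely many cylinder neighborhoods disjoint from $\mathfrak{cl}(\Omega_0)$, and then constructs a convex slab intersection $V$ squeezed between two level sets, so that $\mathfrak{cl}(\Omega_0)\subset V^c=\cup_i V_i^c$, a finite \emph{union} of half-slabs --- you wrote ``finite intersection,'' which should read ``finite union'' for the union bound to apply, and the compactness/convexity of $\Gamma_{0,\eta-\delta}$ is precisely what licenses this reduction and should be flagged explicitly. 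For the lower bound the paper avoids an explicit change of measure: it constructs a simplex $\wt U\subset U_1$ (Lemma~\ref{l5.2}) whose ``near'' face carries the dominant rate and whose remaining faces carry strictly larger rates, then takes a difference of two one-dimensional tail probabilities and applies Theorem~\ref{t6} to each; your plan of tilting $\widehat{\mathbf P}_n$ by $1+a_n g$ is the more classical Cram\'er route and also works (note that the truncation event $A_{nf}$ is exactly what keeps $1+a_n g(X_i)>0$), but to extract the explicit threshold $n_0(\epsilon,\{k_i\},\Omega_0)$ and the explicit $\kappa_n$ in the statement you would need to make the tilting bound quantitative to the same degree that Theorem~\ref{t6} already is.
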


\section{Moderate deviation principle for the common distributions  of empirical measures and bootstrap empirical measures}
In section we prove MDP for the common distribution of $({\mathbf P}^*_{k_n} - \widehat {\mathbf P}_n)\times(\widehat {\mathbf P}_n - {\mathbf P})$. We suppose that $k_n/n \to \nu$ as $n \to \infty$.

\subsection{Basic definitions}
Define sequence $b_n$ such that
\begin{equation*}
\left.
\begin{array}{ll}
 b_n&\to 0

 \\
 n b_n^2&\to \infty
 \\
 \frac{b_n}{b_{n+1}}&\to 1
\end{array}
\right\}\quad\text{as}\quad n \to\infty.
\end{equation*}
MDP is provided in terms of the $\tau_\Phi$-topology with the set
$\Phi$ of measurable functions $f$ such that
\begin{equation}\label{2.2}
\lim_{n\to \infty}\frac{1}{nb_n^2}\log ( n{\mathbf P}(|f(X)| > b_n^{-1})) =
- \infty.
\end{equation}
Define the $\tau_\Phi$-topology in $\Lambda^2_\Phi$ and $\Lambda^2_{0\Phi}$ as the product of $\tau_\Phi$-topologies.

For any
 $\ov{{\mathbf G}} = {\mathbf G}_2\times {\mathbf G}_1 \in \Lambda_0^2$ the rate function equals
 $$
\rho_{0b}^2(\ov{{\mathbf G}},\,{\mathbf P}) = \nu\, \rho_0^2({\mathbf G}_2,\,{\mathbf P}) +
\rho_0^2({\mathbf G}_1,\,{\mathbf P}).
$$
For any set
$\ov{\Omega}_0 \subset \Lambda_{0\Phi}^2$ denote
$$
\rho^2_{0b}(\ov{\Omega}_0,\,{\mathbf P}) =
\inf \{\,\rho^2_{0b}(\ov{{\mathbf G}}\,,{\mathbf P})\,: \ov{{\mathbf G}} \in \ov{\Omega}_0\, \}.
$$
We fix signed measures ${\mathbf H}, {\mathbf H}_n \in \Lambda_{0\Phi}$ satisfying the following assumptions.
 \vskip 0.3cm

{\bf A.} There hold
$$
{\mathbf P}_n = {\mathbf P} + b_n {\mathbf H}_n \in \Lambda_\Phi,\quad
{\mathbf P} + b_n {\mathbf H} \in \Lambda_\Phi
$$
 and ${\mathbf H}_n \to {\mathbf H}$ as $n \to \infty$ in the $\tau_\Phi$-toplogy.

{\bf B1.} For any $f \in \Phi$
$$
\limsup_{n\to\infty}\sup_{m}(nb_n^2)^{-1}\log\left(nb_n\int\chi(|f(x)|> b_n^{-1})\, d|{\mathbf H}_m|\right)=-\infty.
$$
Define the signed measure ${\mathbf O}\in \Lambda_{0\Phi}$, such that ${\mathbf O}(A)= 0$ for any set $A \in \EuScript{F}$.
For all ${\mathbf G} \in \Lambda_{0\Phi}$ denote $\widetilde {\mathbf G} = {\mathbf O}\times {\mathbf G}$.

\begin{theorem} \label{t2.1} Assume
{\rm A } and {\rm B1}. Let $\ov{\Omega}_0 \subset
\Lambda^2_{0\Phi}$ be $\sigma_\Phi$-measurable set in $\Lambda^2_{0\Phi}$. Then the following MDP holds
\begin{multline*}
\liminf_{n \to \infty} (nb_n^2)^{-1} \log {\mathbf P}_n(
({\mathbf P}^*_n-\widehat {\mathbf P}_n) \times (\widehat{{\mathbf P}}_n-{\mathbf P}_0)\in b_n \ov{\Omega}_0) \\
\ge
-\rho_{0b}^2(\mathfrak{int}(\ov{\Omega}_0-\wt{\mathbf H}), {\mathbf P})
\end{multline*}
and
\begin{multline*}
\limsup_{n \to \infty} (nb_n^2)^{-1} \log
{\mathbf P}_n( ({\mathbf P}^*_n-\widehat {\mathbf P}_n)\times (\widehat{{\mathbf P}}_n-{\mathbf P})\in b_n \ov{\Omega}_0)
\\
\le-\rho_{0b}^2(\mathfrak{cl}(\ov{\Omega}_0-\wt{\mathbf H}), {\mathbf P}).
\end{multline*}
\end{theorem}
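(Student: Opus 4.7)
The strategy is to combine the conditional bootstrap MDP of Theorems~\ref{t2.5}--\ref{t2.6} (applied with $a_n=b_n$ and test class $\Theta=\Phi$) with a shifted classical MDP for the empirical measure $\widehat{\mathbf P}_n$ under the perturbed sampling law ${\mathbf P}_n={\mathbf P}+b_n{\mathbf H}_n$. The translation $-\wt{\mathbf H}$ appearing in the rate function is produced by Assumption~A: writing $\widehat{\mathbf P}_n-{\mathbf P}=(\widehat{\mathbf P}_n-{\mathbf P}_n)+b_n{\mathbf H}_n$ and using ${\mathbf H}_n\to{\mathbf H}$ in $\tau_\Phi$, the event $\{(\widehat{\mathbf P}_n-{\mathbf P})/b_n\in U\}$ differs from $\{(\widehat{\mathbf P}_n-{\mathbf P}_n)/b_n\in U-{\mathbf H}\}$ by an arbitrarily small $\tau_\Phi$-perturbation; the latter event is governed under ${\mathbf P}_n$ by the classical empirical MDP with rate $\rho_0^2(\,\cdot\,,{\mathbf P}_n)\to\rho_0^2(\,\cdot\,,{\mathbf P})$. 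Assumption~B1 is what keeps that empirical MDP valid in the $\tau_\Phi$-topology uniformly along the sequence ${\mathbf P}_n$.

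\textbf{Lower bound.} Fix $\bar{\mathbf G}=(\bar{\mathbf G}_2,\bar{\mathbf G}_1)\in\mathfrak{int}(\ov{\Omega}_0-\wt{\mathbf H})$ with finite rate and a $\tau_\Phi$-neighborhood $U_2\times U_1$ of $\bar{\mathbf G}$ so small that $U_2\times(U_1+{\mathbf H})\subset\mathfrak{int}(\ov{\Omega}_0)$. Conditioning on the original sample, the joint probability dominates
\[
\int_{\{\widehat{\mathbf P}_n-{\mathbf P}\,\in\,b_n(U_1+{\mathbf H})\}}{\mathbf P}^*\bigl({\mathbf P}^*_n-\widehat{\mathbf P}_n\in b_nU_2\bigm|\widehat{\mathbf P}_n\bigr)\,d{\mathbf P}_n.
\]
Theorem~\ref{t2.6} provides, off an exceptional set of ${\mathbf P}_n$-mass $\le 1-\kappa_n=o(1)$, the pointwise lower bound $\exp(-k_nb_n^2(\rho_0^2(\bar{\mathbf G}_2,{\mathbf P})+\epsilon))$ on the integrand. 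For the ${\mathbf P}_n$-measure of the domain of integration, substitute $\widehat{\mathbf P}_n-{\mathbf P}_n+b_n{\mathbf H}_n$ for $\widehat{\mathbf P}_n-{\mathbf P}$, use ${\mathbf H}_n\to{\mathbf H}$, and invoke the classical Borovkov-type empirical MDP to bound it below by $\exp(-nb_n^2(\rho_0^2(\bar{\mathbf G}_1,{\mathbf P})+\epsilon))$. Multiplying, using $k_n/n\to\nu$, and letting $\epsilon\downarrow 0$ gives the desired inequality.

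\textbf{Upper bound.} By exponential tightness of $(\widehat{\mathbf P}_n-{\mathbf P})/b_n$ and $({\mathbf P}^*_n-\widehat{\mathbf P}_n)/b_n$ in the $\tau_\Phi$-topology (immediate from B1 and the definition of $\Phi$), we may intersect $\ov{\Omega}_0$ with a $\tau_\Phi$-compact set $K$ modulo error negligible at the MDP scale. Cover $\mathfrak{cl}(\ov{\Omega}_0-\wt{\mathbf H})\cap K$ by finitely many closed product neighborhoods $V_2({\mathbf G})\times V_1({\mathbf G})$ on each of which $\rho_{0b}^2(\,\cdot\,,{\mathbf P})\ge\rho_{0b}^2({\mathbf G},{\mathbf P})-\epsilon$. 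Bound the corresponding piece of the joint probability by the product of the conditional bootstrap upper bound of Theorem~\ref{t2.6}, applied uniformly for $\widehat{\mathbf P}_n$ lying in the $\tau_\Phi$-neighborhood ${\mathbf P}+b_n(V_1({\mathbf G})+{\mathbf H})$, and the empirical MDP upper bound for the event $\{\widehat{\mathbf P}_n-{\mathbf P}\in b_n(V_1({\mathbf G})+{\mathbf H})\}$ under ${\mathbf P}_n$. Summing the finite cover and letting $\epsilon\downarrow 0$ completes the argument.

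\textbf{Main obstacle.} The delicate point is the upper bound, where the bootstrap conditional MDP must hold \emph{uniformly} over $\widehat{\mathbf P}_n$ in a $\tau_\Phi$-neighborhood, not merely at a single realization --- this is precisely why the quantitative Theorem~\ref{t2.6} was formulated. One needs to check that, for the cover chosen first and $n$ large enough, Assumption~B1 forces the bad-set probability $1-\kappa_n$ to decay faster than any $\exp(-nb_n^2 C)$ relevant to the MDP rate, so that averaging over $\widehat{\mathbf P}_n$ does not spoil the bound. Keeping the shifts by ${\mathbf H}_n$ aligned with the target shift by ${\mathbf H}$ and matching the two exponential scales $nb_n^2$ and $k_nb_n^2=\nu n b_n^2(1+o(1))$ is the main book-keeping difficulty.
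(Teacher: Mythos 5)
Your proposal takes a conditioning route that is genuinely different from the paper's argument. The paper never decomposes the joint event into a conditional bootstrap probability times an empirical probability. Instead, it reduces Theorem~\ref{t2.1} by the Dawson--G\"artner projective limit theorem to a finite-dimensional LDP for the vector
$$
\vec U_n(\vec X)=\Big(\tfrac1n\sum f_1(X_i),\dots,\tfrac1n\sum f_{k_1}(X_i),\ \tfrac1n\sum g_1(X_i^*)-\bar g_1,\dots,\tfrac1n\sum g_{k_2}(X_i^*)-\bar g_{k_2}\Big),
$$
then proves that LDP via the G\"artner--Ellis theorem by computing the scaled joint log-moment-generating function directly (Lemmas~\ref{l4.2}--\ref{l4.5}), after truncating the test functions on the event where some $|f_i(X_l)|$ or $|g_i(X_l^*)|$ exceeds $b_n^{-1}$ and showing the exceptional event is super-exponentially negligible (Lemma~\ref{l4.3}). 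The bootstrap conditional expectation is absorbed \emph{inside} the outer expectation, so Theorems~\ref{t2.5}--\ref{t2.6} play no role whatever in the proof of Theorem~\ref{t2.1}.

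Your approach has a genuine gap, and it lies exactly where you flag ``the main obstacle'' --- but the obstacle is not merely bookkeeping, it is a mismatch of scales. Theorem~\ref{t2.6} delivers its conditional bound only off a bad set of outer probability $1-\kappa_n\asymp n\,h(ca_n)+C\,n^{1-t/2}$. The second term is \emph{polynomially} small in $n$, uniformly in the choice of $\Omega_0$, $\epsilon$. The MDP events you want to combine it with are of size $\exp\{-nb_n^2\cdot\text{const}\}$, and $nb_n^2\to\infty$ is allowed to grow as fast as any $o(n)$. Whenever $nb_n^2$ outpaces $\log n$ (the generic case), the polynomial $n^{1-t/2}$ dominates $\exp\{-nb_n^2\eta\}$. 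In the upper bound you then cannot absorb ${\mathbf P}_n(\text{bad set})$ into the target rate; and in the lower bound you cannot guarantee that the good set intersects $\{\widehat{\mathbf P}_n-{\mathbf P}\in b_n V_1\}$ at the full exponential order, since $p_n-\beta_n$ may well be negative. To close this you would need a \emph{conditional} version of the bad-set estimate --- that ${\mathbf P}_n(\text{bad set}\,|\,\widehat{\mathbf P}_n-{\mathbf P}\in b_nV_1)=o(1)$ --- but that is a nontrivial tilted-measure argument which neither your proposal nor the paper's Theorem~\ref{t2.6} supplies. There is also a secondary mismatch: Theorem~\ref{t2.6} is stated in the $\tau_{\Theta_{t,h}}$ or $\tau_{\Theta_t}$ topology under hypotheses (\ref{qqq5}) or $a_n=o(n^{-1/t})$, while Theorem~\ref{t2.1} lives in the $\tau_\Phi$ topology defined by (\ref{2.2}); simply setting ``$\Theta=\Phi$, $a_n=b_n$'' does not make Theorem~\ref{t2.6} applicable, and reconciling the two function classes would require a separate argument. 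The paper's G\"artner--Ellis route avoids all of this by never leaving the domain where the exceptional truncation events are super-exponentially negligible.
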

Bolthausen \cite{bol}  has proved the Donsker-Varadhan LDP \cite{do} when the laws of random variables converge weakly and a uniform exponential
integration condition is satisfied. Theorem \ref{t2.1}  can be considered as  a version of these results.
 \medskip

 \noindent\textbf{Remark 2.1.} In hypothesis testing, the  type II error probabilities
are often analyzed for the alternatives ${\mathbf P}_n$
converging to the hypothesis ${\mathbf P}$.
 Theorem~\ref{t2.1} allows to study moderate deviation probabilities
 for this setup. The analysis of importance sampling efficiency is also based on MDP with a sequence of p.m.'s
${\mathbf P}_n$ converging to p.m. ${\mathbf P}$ (see \cite{er95}).
Naturally, if we suppose that ${\mathbf H}_n$, ${\mathbf H}$ are absent,
we get usual form of MDP.

The modern form of  LDP-MDP (see.~\cite{ac, gao, leo}) covers the case of unmeasurable sets $ \ov{\Omega}_0$ and is provided in terms of outer and inner probabilities (see
Theorems~\ref{t2.6} and \ref{t2.5}). Theorem~\ref{t2.1} can be also provided in such a form.

Theorem \ref{t2.4} provided below shows that we can not make significantltly larger the zones of moderate  deviation probabilities in Theorem ~\ref{t2.1}. \vskip 0.2cm

\begin{theorem}\label{t2.4} Let random variable $Y = |f(X)|$ satisfies
{\rm(\ref{2.2})}. Let sequences $r_n$ and $e_n$ be such that
$b_n^{-1} < r_ n$, $b_n^{-1}e_n \to \infty$, $ne_n/r_n \to \infty$
as $n \to \infty$ and
\begin{equation}\label{2.10}
\lim_{n \to \infty}(ne_n^2)^{-1}\log \left(n{\mathbf P}\left(Y > r_{n}\right)\right) = 0,
\end{equation}

\begin{equation}\label{2.11}
\lim_{n\to\infty} (r_ne_n)^{-1}\log\frac{ne_n}{r_n}= 0.
\end{equation}

Let $Y_1,\ldots,Y_n$ be independent copies of $Y$ and let
$Y_1^*,\ldots,Y_n^*$ be bootstrap sample obtained from $Y_1,\ldots,Y_n$.
Then
\begin{equation*}
\lim_{n \to \infty} (ne_n^2)^{-1} \log {\mathbf P}\left(\sum_{i=1}^n Y^*_i > ne_n\right) = 0.
\end{equation*}
\end{theorem}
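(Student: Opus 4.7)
The plan is to derive the lower bound on $\mathbf{P}(\sum_{i=1}^n Y_i^* > ne_n)$ from the single mechanism in which one original observation is outsize (exceeds $r_n$) and the bootstrap happens to resample it enough times to push the sum past $ne_n$. The matching upper bound $\limsup(ne_n^2)^{-1}\log\mathbf{P}(\cdots) \le 0$ is immediate from $\mathbf{P}(\cdots) \le 1$, so the content of the theorem is entirely in the lower bound.

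First I would set $A_n = \{\max_{1\le i\le n} Y_i > r_n\}$. Since $\mathbf{P}(A_n) = 1 - (1-\mathbf{P}(Y>r_n))^n \ge \tfrac{1}{2}\min\{1,\,n\mathbf{P}(Y>r_n)\}$ for large $n$, hypothesis (\ref{2.10}) yields $(ne_n^2)^{-1}\log\mathbf{P}(A_n) \to 0$. Conditionally on $Y_1,\dots,Y_n$, let $N$ count the bootstrap indices falling into $\{j: Y_j > r_n\}$; then $N \sim \mathrm{Bin}(n,\,|\{j: Y_j > r_n\}|/n)$, and on $A_n$ this stochastically dominates $N' \sim \mathrm{Bin}(n,1/n)$. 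For $k_n = \lceil ne_n/r_n\rceil + 1$, the event $\{N \ge k_n\}$ forces $\sum Y_i^* \ge N r_n > ne_n$, so
\begin{equation*}
\mathbf{P}\Bigl(\sum_{i=1}^n Y_i^* > ne_n\Bigr) \ge \mathbf{P}(A_n)\,\mathbf{P}(N' \ge k_n).
\end{equation*}

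The central estimate is then a lower bound on $\mathbf{P}(N'\ge k_n)\ge\mathbf{P}(N'=k_n)$ via Stirling's formula, giving
\begin{equation*}
\log\mathbf{P}(N'=k_n) \ge -k_n\log(k_n/e) - \frac{k_n^2}{2n} - C\log n
\end{equation*}
for an absolute constant $C$. With $k_n \sim ne_n/r_n$, dividing by $ne_n^2$ produces three terms to inspect: $k_n\log(k_n/e)/(ne_n^2) = \log(ne_n/r_n)/(r_ne_n) - 1/(r_ne_n)$ vanishes by (\ref{2.11}) and by $r_ne_n \ge e_n/b_n \to \infty$; $k_n^2/(n\cdot ne_n^2)=1/r_n^2$ vanishes since $r_n > b_n^{-1}\to \infty$; and $\log n/(ne_n^2)\to 0$ because $ne_n^2 \ge nb_n^2\to\infty$. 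Combining these with the bound on $\log\mathbf{P}(A_n)$ gives $\liminf(ne_n^2)^{-1}\log\mathbf{P}(\sum Y_i^* > ne_n)\ge 0$, which with the trivial upper bound proves the claim.

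The main obstacle I foresee is the Stirling step: the expansion must retain the correction $k_n^2/n$ rather than just the Poisson-limit approximation $1/k_n!$, because $k_n$ need not be $o(\sqrt{n})$. It is precisely the hypothesis $r_n > b_n^{-1}$ that renders this correction $1/r_n^2$ negligible at the $ne_n^2$ scale, while (\ref{2.10}), (\ref{2.11}), $b_n^{-1}e_n\to\infty$, and $ne_n/r_n\to\infty$ are calibrated so that every remaining term in the logarithm also contributes $o(ne_n^2)$. Beyond this Stirling bookkeeping, the argument is routine.
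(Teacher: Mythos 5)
Your proof is correct and takes a genuinely cleaner route than the paper's. The paper conditions on the event that exactly $k_n$ of the original observations land in $(r_n,\infty)$ (with $k_n\to\infty$ chosen to balance the cost), estimates this probability via Stirling, then conditions further on the bootstrap drawing $m_n\approx cne_n/r_n$ of those large values and estimates that via a second Stirling expansion, and finally controls the contribution of the small values with a Chebyshev bound. You instead observe that a single large original observation suffices: conditionally, the bootstrap hit-count dominates $\mathrm{Bin}(n,1/n)$, and since $Y=|f(X)|\ge 0$ no Chebyshev step is needed to prevent the small values from dragging the sum down. This reduces the argument to one Stirling estimate for a Poisson-like tail, and the same hypotheses (\ref{2.10}), (\ref{2.11}), $r_n>b_n^{-1}$, $r_ne_n\to\infty$ make every term vanish at the $ne_n^2$ scale. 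Your route costs roughly $k_n\log k_n$ in the exponent versus the paper's smaller $\approx k_n\log\log k_n$ after its optimization of the intermediate $k_n$, but both are $o(ne_n^2)$ under (\ref{2.11}), so nothing is lost for this theorem. One small imprecision: you justify $\log n/(ne_n^2)\to 0$ by ``$ne_n^2\ge nb_n^2\to\infty$,'' but divergence alone does not give that; fix it by noting the Stirling residual in $\log\mathbf{P}(N'=k_n)$ is actually $O(\log k_n)$ (the $\log n!$ and $\log(n-k_n)!$ contributions cancel the $n$-dependence), and $\log k_n/(ne_n^2)\to 0$ follows from writing $ne_n^2=(ne_n/r_n)(r_ne_n)$ with both factors $\to\infty$ and $\log x/x\to 0$.
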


Proof of Theorem \ref{t2.4} are provided in section 7.

\medskip
\noindent\textbf{Example.} Let ${\mathbf P}(Y> t) = \exp\{-t^\gamma\}$,
$0 <\gamma <1$. Then $b_n = o(n^{-\frac{1}{2+\gamma}})$.
By
straightforward calculations, we get that (\ref{2.10}),
(\ref{2.11}) hold for any sequence $r_n =
n^{\frac{1}{2+\gamma}}f_n$, $e_n =
n^{-\frac{1}{2+\gamma}}f_n^{\frac{\gamma}{2}-\delta}$, with $(\log
n)^{\frac{1}{1 + \frac{\gamma}{2}-\delta}}<<f_n <<
n^{\frac{\gamma}{(2+\gamma)(1 + \delta)}}$ and
$0<\delta<\frac{\gamma}{2}$. Therefore we can not improve significantly the moderate deviation zone in Theorem ~\ref{t2.1} for this asymptotic of ${\mathbf P}(Y>t)$.

\subsection{Moderate deviation principle for empirical measure}
Theorem \ref{t2.3} provided below can be considered as a version of moderate deviation principle established in  \cite{ar} for empirical processes. In such a form this MDP has been proved in  \cite{er95} and is provided here for comparison with the bootstrap results.

Define the set $\Psi$ of measurable functions $f:S\to R^1$ such that
\begin{equation} \label{2.6}
\lim_{n\to \infty} (nd_n^2)^{-1}\log ( n{\mathbf P}(|f(X)| > nd_n)) = - \infty
\end{equation}
where $d_n \to 0$, $nd_n^2 \to \infty$, $d_{n+1}/d_n \to 1$ as $n \to \infty$.

Suppose the following.

{\bf B2.} For each $f \in \Psi$, there holds
\begin{equation*}
\lim_{n\to\infty}(nd_n^2)^{-1}\sup_{m}\log\left(nd_n\int\chi(|f(x)|> nd_n)\, d|H_m|\right)=-\infty.
\end{equation*}
Using the reasoning of Lemma 2.5 in \cite{ei03},
we get that B1 and B2 imply
\begin{equation} \label{e1}
{\rm sup}_{m} \int f^2 d |H_m| < \infty
\end{equation}
and (\ref{2.2}) or (\ref{2.6}) implies
\begin{equation} \label{e2}
\int f^2 d {\mathbf P} < \infty.
\end{equation}
In Lemma 2.5 in \cite{ei03}, (\ref{e2}) has been proved, if $d_n$ is decreasing and $n^{1/2}d_n$ is increasing.
Since $d_n/d_{n-1} \to 1$ as $n \to \infty$ we can choose a subsequence $d_{n_k}$ such that $n_k^{1/2}d_{n_k}$ is increasing and $d_{n_k}/d_{n_{k-1}} \to 1$ as $k \to \infty$. After that we can choose a subsequence $d_{n_{k_i}}$ such that $d_{n_{k_i}}$ is decreasing and $d_{n_{k_i}}/d_{n_{k_{i-1}}}\to 1$ as $i\to \infty$. Implementing to the subsequence $d_{n_{k_i}}$ the same reasoning as in the proof of Lemma 2.5 in \cite{ei03} we get (\ref{e2}) without assuming that the sequences $d_n$ and $n^{1/2}d_n$ are monotone.

\begin{theorem}\label{t2.3}
Assume {\rm A} with $\Phi = \Psi$ and ~{\rm B2}.
Let the set $\Omega_0$ is the $\sigma_\Psi$-measurable subset of
~$\Lambda_{0\Psi}$. Then  MDP holds
\begin{equation*}
\liminf_{n \to \infty} (nd_n^2)^{-1} \log {\mathbf P}_n (\widehat {\mathbf P}_n \in {\mathbf P} + d_n \Omega_0)\ge
-\rho^2_{0}(\mathfrak{int}(\Omega_0 - {\mathbf H}), {\mathbf P}_0)
\end{equation*} and
\begin{equation*}
\limsup_{n \to \infty} (nd_n^2)^{-1} \log {\mathbf P}_n (\widehat {\mathbf P}_n \in {\mathbf P} + d_n \Omega_0)\le
-\rho^2_{0}(\mathfrak{cl}(\Omega_0 - {\mathbf H}), {\mathbf P}_0)
\end{equation*}
\end{theorem}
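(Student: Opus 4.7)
The plan is to reduce Theorem~\ref{t2.3} to the standard moderate deviation principle for i.i.d.\ empirical measures centered at their mean, by absorbing the contiguous shift $d_n {\mathbf H}_n$ into the target set, and then to establish that standard MDP by the two classical ingredients: an exponential Chebyshev upper bound and a change-of-measure lower bound, both implemented with truncation adapted to the unboundedness of functions in $\Psi$.

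First I would write $\widehat{{\mathbf P}}_n - {\mathbf P} = (\widehat{{\mathbf P}}_n - {\mathbf P}_n) + d_n {\mathbf H}_n$, so the event of interest becomes $d_n^{-1}(\widehat{{\mathbf P}}_n - {\mathbf P}_n) \in \Omega_0 - {\mathbf H}_n$. Since by~A the convergence ${\mathbf H}_n \to {\mathbf H}$ holds in the $\tau_\Psi$-topology, every open $\tau_\Psi$-neighbourhood of $\Omega_0 - {\mathbf H}$ eventually contains $\Omega_0 - {\mathbf H}_n$, and symmetrically for closed thickenings. This reduces the theorem to an MDP for $d_n^{-1}(\widehat{{\mathbf P}}_n - {\mathbf P}_n)$ with rate function $\rho_0^2(\cdot,{\mathbf P})$ on $\Omega_0 - {\mathbf H}$, with $X_i \sim {\mathbf P}_n$; the replacement of $\rho_0^2(\cdot,{\mathbf P}_n)$ by $\rho_0^2(\cdot,{\mathbf P})$ is then routine, using $\int f^2\, d{\mathbf P}_n \to \int f^2\,d{\mathbf P}$ for $f \in \Psi$ (consequences~(\ref{e1}) and (\ref{e2}) of B2 plus (\ref{2.6})).

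For the upper bound, fix $f \in \Psi$ with $\int f\,d{\mathbf P} = 0$ and bound ${\mathbf P}_n(\int f\,d(\widehat{{\mathbf P}}_n - {\mathbf P}_n) > c d_n)$ by $\exp(-\lambda c n d_n)\,E_{{\mathbf P}_n}[\exp(\lambda f(X))]^n$ with $\lambda = \lambda_\star d_n$. Splitting $f = f\chi(|f|\le nd_n) + f\chi(|f|>nd_n)$, the second piece contributes a negligible amount on the $(nd_n^2)^{-1}\log$-scale thanks to~(\ref{2.6}) and to B2 applied to ${\mathbf H}_n$, while the Taylor expansion of the log-MGF of the truncated piece yields $\exp(n\lambda^2 \int f^2\,d{\mathbf P}/2\cdot(1+o(1)))$. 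Optimising in $\lambda_\star$ produces the single-functional bound $-c^2/(2\int f^2\,d{\mathbf P})$. A finite $\tau_\Psi$-subbase/compactness argument in the style of Donsker--Varadhan extends this to compact targets, and exponential tightness of $d_n^{-1}(\widehat{{\mathbf P}}_n - {\mathbf P}_n)$ on $\tau_\Psi$---derived from the same truncation applied uniformly over $f$ in a neighbourhood basis---extends it further to closed sets. For the lower bound, pick ${\mathbf G} \in \mathfrak{int}(\Omega_0 - {\mathbf H})$ with $\rho_0^2({\mathbf G},{\mathbf P})<\infty$, set $g = d{\mathbf G}/d{\mathbf P}$, and tilt ${\mathbf P}_n$ by a density proportional to $1 + d_n g$ (truncating $g$ at a level tending to infinity to preserve positivity). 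Under the tilted law the LLN places $d_n^{-1}(\widehat{{\mathbf P}}_n - {\mathbf P}_n)$ into any $\tau_\Psi$-neighbourhood of ${\mathbf G}$ with probability $\to 1$, and changing back via Radon--Nikodym costs $n d_n^2 \rho_0^2({\mathbf G},{\mathbf P})(1+o(1))$, by a second-moment LLN for $\sum_i \log(1+d_n g(X_i))$; infimising over ${\mathbf G}$ yields the claim.

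The main obstacle is the fineness of the $\tau_\Psi$-topology combined with the unboundedness of its test functions, which means no off-the-shelf empirical MDP applies directly. Both bounds therefore rest on delicate truncation arguments in which B2 and~(\ref{2.6}) are precisely the quantitative inputs guaranteeing that tails of $f$ under ${\mathbf P}$ and under every ${\mathbf H}_m$ cost only $o(nd_n^2)$ on the exponential scale. The single most technical step is upgrading the compact-set upper bound to all closed sets via exponential tightness, where the interaction between the truncation level, the contiguous shift $d_n {\mathbf H}_n$, and the topology must all be controlled simultaneously.
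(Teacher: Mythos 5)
The paper does not actually contain a proof of Theorem~\ref{t2.3}: immediately before the statement the text reads ``In such a form this MDP has been proved in \cite{er95} and is provided here for comparison with the bootstrap results,'' so the result is quoted from \cite{er95}, not re-proved here. The closest in-paper analogue is the proof of Theorem~\ref{t2.1} in Section~6, which proceeds via the Dawson--Gartner projective limit theorem and a G\"artner--Ellis computation (Lemmas~\ref{l4.2}--\ref{l4.5}) with truncation of the test functions at level $b_n^{-1}$; crucially, the shift ${\mathbf H}$ is absorbed into the limiting log--moment generating function of Lemma~\ref{l4.5} (it produces the linear term $\langle t'\vec f,{\mathbf H}\rangle$), not into the target set. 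Your proposal instead translates the target set by ${\mathbf H}_n$, which is a genuinely different route.

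As written, though, your reduction has a gap. The step ``since ${\mathbf H}_n\to{\mathbf H}$ in the $\tau_\Psi$-topology, every open $\tau_\Psi$-neighbourhood of $\Omega_0-{\mathbf H}$ eventually contains $\Omega_0-{\mathbf H}_n$'' is false when $\Omega_0$ is unbounded: an open neighbourhood of an unbounded set need not contain any non-zero translate of it. This does not damage the lower bound, where it suffices to fix ${\mathbf G}\in\mathfrak{int}(\Omega_0-{\mathbf H})$, take a basic symmetric convex $V$ with ${\mathbf G}+V\subset\Omega_0-{\mathbf H}$, and use that ${\mathbf H}-{\mathbf H}_n\in\tfrac12V$ eventually --- a local, pointwise argument. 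But for the upper bound you must compare $\rho_0^2(\mathfrak{cl}(\Omega_0-{\mathbf H}_n),{\mathbf P})$ with $\rho_0^2(\mathfrak{cl}(\Omega_0-{\mathbf H}),{\mathbf P})$, and this has to be routed through the $\tau_\Psi$-compactness of the sublevel sets $\Gamma_r$ and lower semicontinuity of $\rho_0^2$ (Lemmas~\ref{l5.1} and~\ref{l4.1}), not through the uniform-neighbourhood claim. Separately, your upper bound appeals to exponential tightness of $d_n^{-1}(\widehat{\mathbf P}_n-{\mathbf P}_n)$ with respect to $\tau_\Psi$; this is a substantial assertion in so fine a topology and would require its own proof. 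The paper's machinery deliberately avoids it: Theorem~\ref{t2.1} uses Dawson--Gartner, and the proof of Theorem~\ref{t2.6} uses de Acosta's device of covering the $\tau_\Theta$-compact set $\Gamma_{0,\eta-\delta}$ by finitely many cylinder sets. With those two repairs, the Chebyshev-plus-truncation upper bound and the tilt-by-$(1+d_n g)$ lower bound that you describe are the right computations, consistent with Lemma~\ref{l4.5} specialized to the case with no bootstrap component.
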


\noindent{\bf Example}. Let $\EE[\exp\{c|f(X_1)|^\gamma\}]< \infty$, for all $f \in \Theta$ with $\gamma>0$. Then there hold
$$
b_n = o\left(n^{-\frac{1}{1+\gamma}}\right), \quad d_n =o\left(n^{-\frac{1-\gamma}{2-\gamma}}\right)
\quad\text{and}\quad a_n = o\left(|\log n|^{-\gamma}\right).
$$
Therefore conditional MDP holds for significantly wider zone than MDP for empirical  measures.

\section{Moderate deviation probabilities of statistical functionals}
 For statistical functionals the technique of Freshet and Hadamard derivatives (see \cite{er95} and \cite{gao}) works for the  proofs of MDPs to the same extent as in the proofs of asymptotic normality.

\subsection{Differentiable statistical functionals}
For statistical functionals having the Freshet derivatives   MDP has been studied in \cite{er95}. For functionals having the Hadamard derivatives MDP technique has been developed in \cite{gao}. Instead of convergence in the weak topology, in these results the differentiability of statistical functionals in some metric space is supposed. If $S=R^d$, the Kolmogorov-Smirnov metric on the set of distribution functions is continuous in the $\tau$-topology (see \cite{gor}). Thus the functionals continuous in Kolmogorov-Smirnov metric satisfies  MDP. This approach has been implemented
in \cite{er95} for the proof of MDP  for $L$ and $M$ statistics having the Freshet derivatives.
In \cite{gao}, the Hadamard differentiability  in KS-metric allows to derive MDP for Kaplan-Meier estimator, empirical quantile processes and empirical copula functions. The continuiuty of KS-metric in the $\tau$-topology allows to replace  MDP for empirical processes  with MDP for empirical probability measures in the reasoning.   MDP for the bootstrap empirical quantile processes and MDP for the bootstrap empirical copula functions provided in the subsequent subsections follows straightforwardly from the continuiuty of KS-distance in the $\tau$-topology, Theorem \ref{t2.5} and Theorem 3.1 in \cite{gao}.

In \cite{er95}, we prove that Kolmogorov-Smirnov metric having some weight function is continuous in the $\tau_\Psi$-topology. A version of this result for the $\tau_{\Theta_t}$-topology will be provided in the subsection.

Suppose that
$$
\int |x|^{t\kappa} d\,{\mathbf P} < \infty
$$
with $t> 2, \kappa>0$.

Define the set $\Upsilon$ of measurable functions $f: R^d \to R^1$ such that
$$
|f(x)| \le C(1 + |x|^\kappa), \quad x \in R^d.
$$
Define the set $\Lambda_\kappa$ of all probability measures ${\mathbf Q}$ such that
$$
\int |x|^\kappa d\,{\mathbf Q} < \infty.
$$
Let $F(x), x \in R^d,$ be distribution function of probability measure ${\mathbf P}$.

For any ${\mathbf P}$ and ${\mathbf Q}$ define the distance
$$
\rho_\kappa({\mathbf P},{\mathbf Q}) = \sup_{x\in R^d} |F_Q(x) - F_P(x)|(1 + |x|^\kappa)
$$
where $F_Q$ and $F_P$ stand for c.d.f.'s of probability measures ${\mathbf Q}$ and ${\mathbf P}$ respectively.

Define the $\rho_\kappa$-topology  in $\Lambda_\kappa$ generated by the distance $\rho_\kappa$. 

\begin{theorem} \label{ta3} The $\rho_\kappa$-topology is coaser than the $\tau_\Upsilon$-topology.
\end{theorem}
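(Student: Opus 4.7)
The aim is to show that the identity map $(\Lambda_\kappa,\tau_\Upsilon)\to(\Lambda_\kappa,\rho_\kappa)$ is continuous at every ${\mathbf P}_0\in\Lambda_\kappa$. Equivalently, given ${\mathbf P}_0$ and $\epsilon>0$, I plan to exhibit finitely many test functions $f_1,\ldots,f_N\in\Upsilon$ and a $\delta>0$ such that
\[
\max_{1\le i\le N}\Bigl|\int f_i\,d({\mathbf Q}-{\mathbf P}_0)\Bigr|<\delta\quad\Longrightarrow\quad\rho_\kappa({\mathbf Q},{\mathbf P}_0)<\epsilon.
\]
The supremum defining $\rho_\kappa$ is split over a bulk region $\{|x|\le M\}$ and a tail region $\{|x|>M\}$, where $M=M(\epsilon,{\mathbf P}_0)$ will be chosen large; each region is handled by a different family of test functions.

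For the tail I place into the list $\{f_i\}$ the truncated moment $g_M(y)=|y|^\kappa\chi_{\{|y|>M\}}$, which lies in $\Upsilon$. Because ${\mathbf P}_0\in\Lambda_\kappa$, dominated convergence gives $\int g_M\,d{\mathbf P}_0\to 0$ and the direct tail estimate $|x|^\kappa{\mathbf P}_0(|Y|>|x|)\le\int_{|y|>|x|}|y|^\kappa\,d{\mathbf P}_0\to 0$ holds as $|x|\to\infty$. Choose $M$ so that both $\int g_M\,d{\mathbf P}_0$ and $(1+|x|^\kappa){\mathbf P}_0(|Y|>|x|)$ for $|x|>M$ are smaller than $\epsilon/16$. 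With $\delta<\epsilon/16$, closeness through $g_M$ forces $\int g_M\,d{\mathbf Q}<\epsilon/8$, and a further Markov bound on ${\mathbf Q}$ yields $(1+|x|^\kappa){\mathbf Q}(|Y|>|x|)<\epsilon/4$ for $|x|>M$. Since the multivariate cdf difference $|F_{\mathbf Q}(x)-F_{{\mathbf P}_0}(x)|$ for large $|x|$ is dominated by a sum of marginal tail probabilities at scale $|x|/\sqrt{d}$, this gives the uniform bound $(1+|x|^\kappa)|F_{\mathbf Q}(x)-F_{{\mathbf P}_0}(x)|<\epsilon/2$ on the tail.

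For the bulk $\{|x|\le M\}$ the weight is at most $1+M^\kappa$, so it suffices to force $|F_{\mathbf Q}(x)-F_{{\mathbf P}_0}(x)|<\epsilon/(2(1+M^\kappa))$ uniformly. I construct a finite grid $\mathcal{G}\subset[-M,M]^d$ as follows: place grid coordinate hyperplanes through every jump hyperplane of ${\mathbf P}_0$ of mass exceeding $\epsilon/(8d(1+M^\kappa))$ (finitely many such, by a pigeonhole), and then refine in each coordinate until every resulting rectangular cell has ${\mathbf P}_0$-mass below $\epsilon/(4(1+M^\kappa))$. Add the bounded indicators $\chi(\,\cdot\le x_k)$, $x_k\in\mathcal{G}$, to the test-function list---they lie in $\Upsilon$---so that $|F_{\mathbf Q}(x_k)-F_{{\mathbf P}_0}(x_k)|<\delta$ at every grid point. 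For any $x\in[-M,M]^d$ in a cell with componentwise lower and upper vertices $x^-,x^+\in\mathcal{G}$, componentwise monotonicity of $F_{\mathbf Q}$ and $F_{{\mathbf P}_0}$ supplies the sandwich
\[
F_{\mathbf Q}(x^-)-F_{{\mathbf P}_0}(x^+)\le F_{\mathbf Q}(x)-F_{{\mathbf P}_0}(x)\le F_{\mathbf Q}(x^+)-F_{{\mathbf P}_0}(x^-),
\]
each side being at most (a grid-point difference, $<\delta$)\,$+$\,(the ${\mathbf P}_0$-variation across the cell, $<\epsilon/(4(1+M^\kappa))$). A small enough $\delta$ then delivers the required bulk bound.

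Combining the two estimates gives $\rho_\kappa({\mathbf Q},{\mathbf P}_0)<\epsilon$. The main technical obstacle is the grid construction in the bulk step when $d>1$: lower-dimensional concentrations of ${\mathbf P}_0$ on coordinate hyperplanes have to be absorbed into grid boundaries, and a finite refinement with small ${\mathbf P}_0$-mass per cell must be extracted. This is measure-theoretic bookkeeping; once it is in place, the rest of the proof is a weighted analogue of the classical Polya-type argument, upgrading pointwise convergence of cdfs together with uniform $\kappa$-moment tightness to uniform convergence with weight $(1+|x|^\kappa)$.
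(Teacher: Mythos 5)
Your overall strategy---continuity of the identity $(\Lambda_\kappa,\tau_\Upsilon)\to(\Lambda_\kappa,\rho_\kappa)$, obtained by splitting the weighted sup into a bulk $\{|x|\le M\}$ controlled by finitely many bounded indicator test functions and a tail $\{|x|>M\}$ controlled by the truncated moment $g_M(y)=|y|^\kappa\chi_{\{|y|>M\}}\in\Upsilon$ plus a Markov bound---is the right weighted Polya-type argument and is essentially the route the paper has in mind (the paper omits the proof, referring to Lemma~4.1 of its reference on the weighted Kolmogorov--Smirnov metric). For $d=1$ every step of your argument goes through: a single large coordinate controls the whole cdf difference via tail probabilities, the grid on $[-M,M]$ is elementary, and the finitely many indicator functions $\chi(\cdot\le x_k)$ together with $g_M$ give the required $\tau_\Upsilon$-neighborhood.

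The gap is in the tail step for $d>1$, and it is not the cosmetic grid-building issue you flag at the end. You assert that for $|x|$ large, $|F_{\mathbf Q}(x)-F_{\mathbf P_0}(x)|$ is dominated by a sum of marginal tail probabilities at scale $|x|/\sqrt d$. That is false when $|x|$ is large because a \emph{single} coordinate is large positive while the others remain moderate. Take $d=2$ and $x=(A,0)$ with $A\to\infty$: then $F_{\mathbf Q}(x)-F_{\mathbf P_0}(x)\to \mathbf Q(Y_2\le 0)-\mathbf P_0(Y_2\le 0)$, a fixed number that need not be small, while the weight $(1+|x|^\kappa)=1+A^\kappa\to\infty$; so the weighted tail term is not small, and in fact $\rho_\kappa(\mathbf Q,\mathbf P_0)$ is generically infinite for $\mathbf Q,\mathbf P_0\in\Lambda_\kappa$ when $d>1$. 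A large \emph{negative} coordinate does force both cdfs to be small, but that is the only easy case; the troublesome case of a large positive coordinate together with bounded remaining coordinates cannot be handled by tail control alone. So as written the proof is sound only for $d=1$; if the theorem is intended to hold in $R^d$ with $d>1$ the statement needs a different reading of the $\rho_\kappa$-topology (or an added matching-at-infinity condition on the measures), and your tail estimate would have to be replaced entirely. I would either restrict the proof explicitly to $d=1$, where everything you wrote is correct, or address the $d>1$ tail case head on rather than the bulk grid.
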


The proof of Theorem \ref{ta3} is akin to the proof of Lemma 4.1 in \cite{er95} and is omitted.

\subsection{Bootstrap empirical quantile processes}

Denote $D[a,b], -\infty< a < b < \infty$ the Banach space of all right continuous with left-hand limits functions $f: [a,b] \to R^1$ equipped with uniform norm. Let \break $F(x)$,
$x\in (a,b)$ -- be distribution function of independent identically distributed random variables  $X_1,\ldots,X_n$. Denote $\widehat F_n$  and $F^*_{k_n}$ respectively  empirical distribution functions of
 $X_1,\ldots,X_n$ and
  $X^*_1,\ldots,X^*_{k_n}$.

For any distribution function $G(x)$, $x \in (a,b),$ and any $p \in (0,1)$, denote $(G)^{-1}(p) = \inf\{x:G(x)\ge p\}$.

\begin{theorem}\label{ta5} Let $a_n>0$ be decreasing sequence such that
 $a_n \to 0$, $a_{n+1}/a_n\to 1$, $k_na_n^2 \to \infty$
as $n \to \infty$. Let fixed values of $p$  and $q$ with
 $0<p<q<1$ be provided. Let $F$ have continuous and positive density on interval  $[(F)^{-1}(p)-\epsilon, (F)^{-1}(q)+\epsilon]$
with $\epsilon > 0$. Then, for any set
$\Omega \subset D((F)^{-1}(p),(F)^{-1}(q))$,
we have
\begin{equation*}
\begin{split}&
\liminf_{n\to\infty}(k_na_n^2)^{-1}\log (\widehat {\mathbf P}_n)_{*} ((F^*_{k_n})^{-1} - (\widehat F_n)^{-1}
\in a_n\Omega) \ge
-I_q( \mathfrak{int}(\Omega))\quad a.\,s_*
\end{split}
\end{equation*}
and
\begin{equation*}
\begin{split}&
\limsup_{n\to\infty}(k_na_n^2)^{-1}\log (\widehat {\mathbf P}_n)^{*} ((F^*_{k_n})^{-1} - (\widehat F_n)^{-1}
\in a_n\Omega) \le
-I_q(\mathfrak{cl}(\Omega)) \quad a.\,s^*,
\end{split}
\end{equation*}
where for any set $\Psi\subset D((F)^{-1}(p)-\epsilon,(F)^{-1}(q)+\epsilon)$
\begin{equation*}
\begin{split}&
 I_q(\Psi)=\inf\left\{\rho_0^2({\mathbf Q},{\mathbf P}): {\mathbf Q}\in\Lambda_{0\Theta_2},\, q=\frac{d{\mathbf Q}}{d{\mathbf P}}, -\frac{q((F)^{-1}(x))}{f((F)^{-1}(x))} =\phi(x),\right.\\& \left.\phi(x)\in\Psi, \,x \in [p,q] \right\}.
\end{split}
\end{equation*}
\end{theorem}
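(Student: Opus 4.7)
The strategy is to regard $F\mapsto F^{-1}|_{[p,q]}$ as a Hadamard-differentiable functional and to push the conditional MDP for bootstrap empirical measures (Theorem \ref{t2.5}) through it by a delta-method/contraction-principle argument. Three ingredients are required: the MDP for $({\mathbf P}^*_{k_n}-\widehat{\mathbf P}_n)/a_n$ in a sufficiently fine topology on signed measures; continuity of the map from signed measures to distribution functions in a Kolmogorov--Smirnov type distance; and the Hadamard differentiability of the quantile map together with the MDP version of the delta method in Theorem 3.1 of \cite{gao}.

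First I would apply Theorem \ref{t2.5} with $\Theta=\Theta_2$, obtaining the conditional MDP for ${\mathbf P}^*_{k_n}-\widehat{\mathbf P}_n\in a_n\Omega_0$ in the $\tau_{\Theta_2}$-topology on $\Lambda_{0\Theta_2}$, with rate function $\rho_0^2(\cdot,{\mathbf P})$ (in the $a.s._*$/$a.s.^*$ sense). By Theorem \ref{ta3}, the weighted KS-distance $\rho_\kappa$ is continuous with respect to $\tau_{\Theta_2}$; in particular, the map sending a signed measure ${\mathbf G}\in\Lambda_{0\Theta_2}$ to its cumulative distribution function $G(x)={\mathbf G}((-\infty,x])$ is continuous into the uniform norm on the interval $[(F)^{-1}(p)-\epsilon,(F)^{-1}(q)+\epsilon]$. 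Applying this continuous-mapping step, the MDP is transferred to a statement about $F^*_{k_n}-\widehat F_n$ with the same rate function, re-expressed in terms of densities $dG/d{\mathbf P}$.

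Under the regularity on $F$ (continuous positive density on a neighbourhood of $[(F)^{-1}(p),(F)^{-1}(q)]$), the quantile map $F\mapsto F^{-1}|_{[p,q]}$ is Hadamard differentiable at $F$ with derivative $h\mapsto -h(F^{-1}(\cdot))/f(F^{-1}(\cdot))$. Feeding the MDP from the previous step into the MDP-version of the delta method (Theorem 3.1 of \cite{gao}) produces the claimed bounds for $(F^*_{k_n})^{-1}-(\widehat F_n)^{-1}$, with rate function obtained by minimising $\rho_0^2({\mathbf Q},{\mathbf P})$ over signed measures ${\mathbf Q}\ll{\mathbf P}$ whose density $q=d{\mathbf Q}/d{\mathbf P}$ produces the profile $\phi(x)=-q(F^{-1}(x))/f(F^{-1}(x))\in\Psi$; this matches $I_q(\Psi)$ in the statement.

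The principal obstacle is the careful bookkeeping of outer and inner probabilities throughout, because $D[p,q]$-valued quantile processes are not Borel measurable in the uniform norm, so each continuity/contraction step must be executed at the level of the $a.s._*$/$a.s.^*$ formalism rather than via a direct contraction principle on measurable maps. A secondary technical point is that the differentiability is needed along sample paths, i.e.\ one applies the derivative at $F$ while the base point is $\widehat F_n$; this requires $\widehat F_n\to F$ uniformly on the relevant interval (which follows from Glivenko--Cantelli under the density assumption) together with the standard robustness of Hadamard differentiability under a moving base point. Finally, the matching of interiors and closures under the quantile map must be checked: the interior/closure of $\Omega\subset D[p,q]$ pulled back to $\Lambda_{0\Theta_2}$ contains/is contained in the interior/closure of the corresponding set of signed measures, which is exactly what the continuous delta-method statement of \cite{gao} supplies.
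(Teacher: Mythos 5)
Your proposal matches the paper's own proof: the paper states, just before Theorem~\ref{ta3}, that Theorem~\ref{ta5} ``follows straightforwardly from the continuity of KS-distance in the $\tau$-topology, Theorem~\ref{t2.5} and Theorem~3.1 in \cite{gao}'' --- exactly the three ingredients you identify and combine. The only minor divergence is that, because the quantile map restricted to $[p,q]$ with $0<p<q<1$ depends on the c.d.f.\ only on a bounded interval, the paper invokes the ordinary $\tau$-topology and the unweighted Kolmogorov--Smirnov distance, whereas you reach for the weighted $\rho_\kappa$ of Theorem~\ref{ta3} and the $\tau_{\Theta_2}$-topology, which is slightly more than is needed here (and does not quite match the hypotheses of Theorem~\ref{t2.5}, which are stated for $\Theta_{2,h}$ or $\Theta_t$ with $t>2$ rather than for $\Theta_2$ itself).
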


\begin{theorem}\label{ta7} Let $b_n>0$ be decreasing sequence such that
 $b_n \to 0$, $b_{n+1}/b_n\to 1$, $k_nb_n^2 \to \infty$ as $n \to \infty$.
Let $k_n/n \to \nu$ as $n \to \infty$. Let  $F$ satisfy the conditions of Theorem ~{\rm\ref{ta5}}. Then, for any sets
$$
\Omega_1 \subset D ((F)^{-1}(p),(F)^{-1}(q)) \ \text{ and } \ \Omega_2 \subset D((F)^{-1}(p),(F)^{-1}(q)),
$$
we have
\begin{equation*}
\begin{split}&
\liminf_{n\to\infty}(n b_n^2)^{-1}\ln ( {\mathbf P})_{*} ((F^*_{k_n})^{-1} - (\widehat F_n)^{-1}
\in b_n\Omega_2, (\widehat F_{n})^{-1} - ( F)^{-1}
\in b_n\Omega_1)\\& \ge
-\nu I_q( \mathfrak{int}(\Omega_2))+I_q( \mathfrak{int}(\Omega_1))
\end{split}
\end{equation*}
and
\begin{equation*}
\begin{split}&
\limsup_{n\to\infty}(n b_n^2)^{-1}\ln ( {\mathbf P})^{*} ((F^*_{k_n})^{-1} - (\widehat F_n)^{-1}
\in b_n\Omega_2, (\widehat F_{n})^{-1} - ( F)^{-1}
\in b_n\Omega_1)\\& \le
-\nu I_q( \mathfrak{cl}(\Omega_2))+I_q( \mathfrak{cl}(\Omega_1)).
\end{split}
\end{equation*}
\end{theorem}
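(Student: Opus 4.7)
The plan is to follow the same route used to establish Theorem~\ref{ta5}, but starting from the joint MDP for bootstrap and empirical measures (Theorem~\ref{t2.1}) in place of the conditional MDP. Combining this joint MDP with the continuity of the Kolmogorov--Smirnov distance under the relevant $\tau_\Upsilon$-topology (Theorem~\ref{ta3}) and the Hadamard functional delta method for quantile functionals (Theorem~3.1 of \cite{gao}) will produce the two-sided asymptotic inequalities asserted.

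Concretely, I would first apply Theorem~\ref{t2.1} with ${\mathbf H}_n = {\mathbf H} = {\mathbf O}$, so that assumptions A and B1 are trivially satisfied and ${\mathbf P}_n = {\mathbf P}$. This yields a joint MDP for $({\mathbf P}^*_{k_n} - \widehat {\mathbf P}_n) \times (\widehat {\mathbf P}_n - {\mathbf P})$ with rate function $\nu\rho_0^2({\mathbf G}_2,{\mathbf P}) + \rho_0^2({\mathbf G}_1,{\mathbf P})$. Indicators of half-lines $\chi((-\infty,x])$ are bounded and therefore belong to every admissible $\Phi$, so the MDP covers these signed measures evaluated against such indicators. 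By Theorem~\ref{ta3} the map ${\mathbf G}\mapsto F_{\mathbf G}$ from signed measures to distribution functions is continuous from the $\tau_\Upsilon$-topology to the uniform topology on $D[F^{-1}(p)-\epsilon,\, F^{-1}(q)+\epsilon]$; the contraction principle then transfers the joint MDP to one for the pair $(F^*_{k_n} - \widehat F_n,\, \widehat F_n - F)$ on $D[F^{-1}(p)-\epsilon, F^{-1}(q)+\epsilon]^2$ with uniform norm, preserving the rate function.

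Under the assumed continuous positive density of $F$ on the neighbourhood $[F^{-1}(p)-\epsilon,\, F^{-1}(q)+\epsilon]$, the quantile map $G \mapsto G^{-1}|_{[p,q]}$ is Hadamard differentiable at $F$ with derivative $h \mapsto -h(F^{-1}(\cdot))/f(F^{-1}(\cdot))$. Applying the functional delta method for MDP from \cite{gao} coordinatewise to the joint MDP above yields the joint MDP for $((F^*_{k_n})^{-1} - (\widehat F_n)^{-1},\, (\widehat F_n)^{-1} - F^{-1})$ on $D(F^{-1}(p),F^{-1}(q))^2$. The resulting rate function is obtained from the contraction principle by infimizing $\nu\rho_0^2({\mathbf G}_2,{\mathbf P}) + \rho_0^2({\mathbf G}_1,{\mathbf P})$ over pairs $({\mathbf G}_2,{\mathbf G}_1)$ whose images under the derivative map lie in $\Omega_2$ and $\Omega_1$ respectively. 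Since the constraints on the two coordinates are independent and the derivative acts separately on each, the infimum factorises into $\nu I_q(\Omega_2) + I_q(\Omega_1)$ with $I_q$ as in Theorem~\ref{ta5}.

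The main obstacle is technical rather than conceptual: $\widehat F_n$ and $F^*_{k_n}$ are not Borel-measurable as $D[a,b]$-valued maps under the uniform norm, so every step must be phrased carefully in terms of inner and outer probabilities, tracking $({\mathbf P})_*$, $({\mathbf P})^*$ and their bootstrap counterparts. A secondary subtlety is that the Hadamard derivative is only tangential to continuous functions, so the delta step must be combined with an exponentially-good approximation of both $F^*_{k_n} - \widehat F_n$ and $\widehat F_n - F$ by smoother perturbations, as is done in \cite{gao}; the hypothesis $k_n/n \to \nu$ enters here to keep the two normalisations compatible and to deliver the factor $\nu$ in front of $I_q(\Omega_2)$.
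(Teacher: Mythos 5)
Your proposal is correct and matches the route the paper itself indicates (the paper offers no separate proof of Theorem~\ref{ta7}, only the remark preceding Theorem~\ref{ta5} that these MDPs follow from the relevant MDP for empirical measures, continuity of the Kolmogorov--Smirnov distance in the $\tau$-topology, and the Hadamard delta method of \cite{gao}; for the joint version one substitutes Theorem~\ref{t2.1} for Theorem~\ref{t2.5}, exactly as you do). Note also that your derived rate $\nu I_q(\Omega_2)+I_q(\Omega_1)$ implicitly corrects what appears to be a sign typo in the statement, whose bounds should read $-\nu I_q(\cdot)-I_q(\cdot)$ rather than $-\nu I_q(\cdot)+I_q(\cdot)$.
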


\subsection{Bootstrap empirical copula processes}
Let $(X_1,Y_1),\break\ldots,(X_n,Y_n)$ be independent identically distributed random vectors having probability measure ${\mathbf P}$ defined on $(a,b)\times(c,d) \supset R^2$.
Let $H$ be distribution function of ${\mathbf P}$. The empirical estimator of copula function $C(u,v) = H((F)^{-1}(u),(G)^{-1}(v))$ is defined as  $\widehat C_n(u,v) = \widehat H_n((\widehat F_n)^{-1}(u),(\widehat G_n)^{-1}(v))$ where $\widehat H_n$ and $\widehat F_n$, $\widehat G_n$ are respectively the joint and  marginal distribution functions of observations. The bootstrap empirical copula function is defined similarly  $C^*_n(u,v) = H^*_n((F^*_n)^{-1}(u),\break(G^*_n)^{-1}(v))$ using the observations
$(X^*_1,Y^*_1),\ldots,(X^*_{k_n},Y^*_{k_n})$. Here $(X^*_1,Y^*_1),\ldots,(X^*_{k_n},Y^*_{k_n})$
are distributed with respect to empirical probability measure $\widehat {\mathbf P}_n$ generated by the observations $(X_1,Y_1),\ldots,(X_n,Y_n)$.

For any set $S$ denote $l_\infty(S)$ linear space of all maps $z: S \to R^1$ having the norm $\|z\| = \sup\limits_{s\in S} |z(s)|$.

\begin{theorem}\label{ta6} Let $a_n>0$  be decreasing sequence such that
$a_n \to 0$, $a_{n+1}/a_n\to 1$, $k_na_n^2 \to \infty$ as $n \to \infty$. Let  $0<p_1<q_1<1$ and $0<p_2<q_2<1$ be fixed.
Suppose that  $F$ and $G$ are continuously differentiable on the intervals $[(F)^{-1}(p_1)-\epsilon,(F)^{-1}(q_1)+\epsilon]$ and $[{(G)^{-1}(p_2)-\epsilon},(G)^{-1}(q_2)+\epsilon]$ respectively and  have strictly positive densities  $f$ and $g$ respectively with $\epsilon >0$.
Suppose that there are continuous derivatives  $\partial H/\partial x$ and $\partial H/\partial y$ on the product of intervals
$$
[(F)^{-1}(p_1)-\epsilon,(F)^{-1}(q_1)+\epsilon]\times[(G)^{-1}(p_2)-\epsilon,(G)^{-1}(q_2)+\epsilon].
$$
Then, for any set $\Omega \subset l_\infty([p_1,q_1]\times[p_2,q_2])$, we have
\begin{equation*}
\begin{split}&
\liminf_{n\to\infty}(k_na_n^2)^{-1}\log (\widehat {\mathbf P}_n)_{*} (C^*_{k_n}- \widehat C_n
\in a_n\Omega) \ge
-I_C( \mathfrak{int}(\Omega))\quad a.\,s_*
\end{split}
\end{equation*}
and
\begin{equation*}
\begin{split}&
\limsup_{n\to\infty}(k_na_n^2)^{-1}\log (\widehat {\mathbf P}_n)^{*} (C^*_{k_n} - \widehat C_n
\in a_n\Omega) \le
-I_C(\mathfrak{cl}(\Omega)) \quad a.\,s^*,
\end{split}
\end{equation*}
where, for any set $\Psi\subset l_\infty([p_1,q_1]\times[p_2,q_2])$, there holds
\begin{align*}
I_C(\Psi) = \inf\bigg\{\rho_0({\mathbf Q}): q = \frac{d{\mathbf Q}}{d{\mathbf P}}, \, {\mathbf Q} \in \Lambda_{0\Theta_2},\,
\Phi'_H(\alpha) = \phi,\, \phi \in \Psi,& \\
\alpha(s,t) =\int\limits_{-\infty}^s\int\limits_{-\infty}^t q(x,y) H(dx,dy)\bigg\},&
\end{align*}
with $\Phi'_H$ defined by
$$
\Phi'_H(\alpha)(u,v) =\alpha((F)^{-1}(u),(G)^{-1}(v))
$$
$$
 - \frac{\partial H}{\partial x}((F)^{-1}(u),(G)^{-1}(v))\frac{\alpha((F)^{-1}(u),\infty)}{f((F)^{-1}(u))}
$$
$$
-
\frac{\partial H}{\partial y}((F)^{-1}(u),(G)^{-1}(v))\frac{\alpha(\infty,(G)^{-1}(v))}{g((G)^{-1}(v))}.
$$
\end{theorem}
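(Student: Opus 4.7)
The strategy is to combine the conditional MDP for bootstrap empirical measures (Theorem \ref{t2.5}) with a Hadamard-type functional delta method for the copula map, mirroring the approach used for empirical copula functions in \cite{gao}.

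First I would apply Theorem \ref{t2.5} in its $\tau_{\Theta_2}$ formulation to the two-dimensional signed measure $\mathbf{P}^*_{k_n}-\widehat{\mathbf{P}}_n$. On the compact rectangle $[(F)^{-1}(p_1)-\epsilon,(F)^{-1}(q_1)+\epsilon]\times[(G)^{-1}(p_2)-\epsilon,(G)^{-1}(q_2)+\epsilon]$, $\Theta_2$-integrability is automatic, so the conditional MDP holds almost surely in $\tau_{\Theta_2}$ for every $\Omega_0\subset\Lambda_{0\Theta_2}$. Next, the two-dimensional analogue of Theorem \ref{ta3} shows that the Kolmogorov--Smirnov (sup-norm) metric on bivariate signed c.d.f.s is coarser than $\tau_{\Theta_2}$, so open and closed sets in the sup-norm topology pull back appropriately and the MDP transfers to $(\widehat H^*_n-\widehat H_n)$ in the space of bivariate distribution functions equipped with uniform norm on the rectangle.

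Then I would invoke the Hadamard differentiability of the copula functional $\Phi:(H,F,G)\mapsto H(F^{-1}(\cdot),G^{-1}(\cdot))$ at $(H,F,G)$ under the stated smoothness assumptions (continuous positive densities $f,g$ and continuous partials $\partial H/\partial x$, $\partial H/\partial y$). The resulting derivative, applied to the c.d.f.\ $\alpha$ of a signed measure $\mathbf{Q}$ with density $q=d\mathbf{Q}/d\mathbf{P}$, decomposes into the direct term $\alpha((F)^{-1}(u),(G)^{-1}(v))$ plus the two quantile-correction terms coming from the delta method on $F^{-1}$ and $G^{-1}$; this is exactly the displayed $\Phi'_H(\alpha)$. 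Applying the MDP contraction principle to the continuous linear map $\Phi'_H$ yields the rate function
$$
I_C(\Psi)=\inf\bigl\{\rho_0^2(\mathbf{Q},\mathbf{P})\colon \mathbf{Q}\in\Lambda_{0\Theta_2},\ \Phi'_H(\alpha)\in\Psi,\ \alpha\ \text{the c.d.f. of}\ \mathbf{Q}\bigr\},
$$
matching the statement, and transfers both the lower and upper bounds of the MDP to $C^*_{k_n}-\widehat C_n$.

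The main obstacle is controlling the Hadamard remainder on the moderate deviation scale along almost every bootstrap sample path. Since the linearization point in the bootstrap setup is the random $\widehat H_n$ rather than $H$, one must show that $\Phi'_{\widehat H_n}\to\Phi'_H$ uniformly on compacts in the sup-norm topology and that the $o(a_n)$ remainder in $C^*_{k_n}-\widehat C_n = a_n\Phi'_{\widehat H_n}\bigl((\widehat H^*_n-\widehat H_n)/a_n\bigr)+o(a_n)$ is subexponential on the $k_n a_n^2$ scale. This is where the assumptions of strictly positive densities and continuous partial derivatives enter, via the almost-sure uniform convergence of $\widehat F_n,\widehat G_n,\widehat H_n$ on the rectangle. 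The parallel bookkeeping of outer/inner probabilities, inherited from Theorem \ref{t2.5} through the composition with $\Phi$, is a secondary technical point handled by the standard measurability arguments for empirical processes used in \cite{gao}.
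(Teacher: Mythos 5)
Your proposal is correct and follows exactly the route the paper takes: the paper states that Theorem \ref{ta6} ``follows straightforwardly from the continuity of KS-distance in the $\tau$-topology, Theorem \ref{t2.5} and Theorem 3.1 in \cite{gao}'', which is precisely your combination of the conditional MDP, the bivariate analogue of Theorem \ref{ta3} to move to the sup-norm on distribution functions, and the Gao--Zhao Hadamard delta method (contraction) for the copula map. The paper leaves the proof at this sketch level, so your added discussion of the remainder control at the random linearization point $\widehat H_n$ is a sound elaboration rather than a deviation.
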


\medskip
\noindent{\bf Remark.} Versions of Theorem ~\ref{ta5} and \ref{ta6} can be provided also in terms of convergence on probability  (see Theorem ~\ref{t2.6}).

\section{Proofs of Theorems \ref{t2.5} and \ref{t2.6}
}
We begin with the proof of Theorem \ref{t2.6}. The reasoning are based on the proof LDP for empirical measures proposed in  \cite{ac}. For any $r>0$ define the set $\Gamma_{0r} = \{G: \rho_0^2(G: P) < r, G \in \Lambda_{0\Theta}\}$.

\begin{lemma} \label{l5.1} There hold

{\rm(i)} $\Gamma_{0r} \subset \Lambda_{0\Theta}$,

{\rm(ii)} $\Gamma_{0r}$ is $\tau_\Theta$-compact and sequentially
$\tau_\Theta$-compact set in $\Lambda_{0\Theta}$,

{\rm(iii)} the $\tau$ and $\tau_\Theta$-topologies coincide in $\Gamma_{0r}$.
\end{lemma}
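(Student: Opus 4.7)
The plan is to identify $\Gamma_{0r}$ with a bounded subset of $L^2(\mathbf{P})$ via the Radon--Nikodym map $\mathbf{G}\mapsto g:=d\mathbf{G}/d\mathbf{P}$ and then transfer weak $L^2$-compactness to the $\tau_\Theta$-topology. For part~(i), any $f\in\Theta$ satisfies $\mathbf{E}[|f(X)|^t]<\infty$ with $t\ge 2$, and since $\mathbf{P}$ is a probability measure this forces $f\in L^2(\mathbf{P})$. For $\mathbf{G}\in\Gamma_{0r}$, the Cauchy--Schwarz inequality yields
$$
\int |f|\,d|\mathbf{G}| \;=\; \int |f|\,|g|\,d\mathbf{P} \;\le\; \|f\|_{L^2(\mathbf{P})}\|g\|_{L^2(\mathbf{P})} \;\le\; \sqrt{2r}\,\|f\|_{L^2(\mathbf{P})} \;<\; \infty,
$$
so $\mathbf{G}\in\Lambda_{0\Theta}$, proving (i).

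For part~(ii), introduce $B_{0r}:=\{g\in L^2(\mathbf{P}):\tfrac12\|g\|_2^2\le r,\ \int g\,d\mathbf{P}=0\}$ (replacing the strict inequality in the definition of $\Gamma_{0r}$ by the natural closure, as is standard for level sets of good rate functions). The set $B_{0r}$ is the intersection of a closed norm ball with a weakly closed affine hyperplane, so it is weakly compact by Banach--Alaoglu together with the reflexivity of $L^2(\mathbf{P})$, and weakly sequentially compact by Eberlein--\v{S}mulian. Since $\Theta\subset L^2(\mathbf{P})$ and $\Theta$ contains every bounded measurable function, $\Theta$ is norm-dense in $L^2(\mathbf{P})$; because $B_{0r}$ is norm-bounded, a standard $\epsilon/2$-argument shows that on $B_{0r}$ the topology $\tau_\Theta$ coincides with the restriction of the weak $L^2$-topology. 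The compactness and sequential compactness of $\Gamma_{0r}$ in $\tau_\Theta$ follow.

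For part~(iii), the containment $\tau\subseteq\tau_\Theta$ on $\Gamma_{0r}$ is immediate since bounded measurable functions lie in $\Theta$. By part~(ii), $\tau_\Theta$ makes $\Gamma_{0r}$ compact, while $\tau$ is already Hausdorff (bounded measurable functions separate points of $\Lambda_{0\Theta}$), so the general fact that a weaker Hausdorff topology on a compact space must agree with the given topology yields $\tau=\tau_\Theta$ on $\Gamma_{0r}$. A direct alternative is a truncation argument: for $f\in\Theta$ and $f_N=f\,\chi(|f|\le N)$ one has $\int f_N\,d\mathbf{G}_n\to\int f_N\,d\mathbf{G}$ from $\tau$-convergence, while Cauchy--Schwarz bounds the tail by
$$
\Bigl|\int (f-f_N)\,d\mathbf{G}_n\Bigr| \;\le\; \sqrt{2r}\,\Bigl(\int f^2\,\chi(|f|>N)\,d\mathbf{P}\Bigr)^{1/2},
$$
which tends to $0$ uniformly in $n$ as $N\to\infty$, since $f\in L^2(\mathbf{P})$. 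The main care-point is the identification in~(ii): because $L^2(\mathbf{P})$ need not be weakly metrizable when it is non-separable, one has to rely on the density of $\Theta$ together with the norm-boundedness of $B_{0r}$ to obtain the topological identification, and on Eberlein--\v{S}mulian (rather than metric sequential arguments) for the sequential compactness claim.
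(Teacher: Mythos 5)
Your proof is correct, but it organizes the logic in the opposite order from the paper and uses a genuinely different engine for compactness. The paper proves~(i) by an AM--GM form of Cauchy--Schwarz, then proves~(iii) first by the same inequality plus a truncation argument (showing that each map $\mathbf{G}\mapsto\int\phi\,d\mathbf{G}$, $\phi\in\Theta$, is a uniform limit on $\Gamma_{0r}$ of the $\tau$-continuous maps $\mathbf{G}\mapsto\int_{|\phi|<n}\phi\,d\mathbf{G}$), and only then obtains~(ii) by citing $\tau$-compactness and sequential $\tau$-compactness of the level sets $\Gamma_{0r}$ from Arcones and Borovkov--Mogulskii and invoking the just-proved coincidence of topologies. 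You instead derive~(ii) directly and self-containedly, identifying $\Gamma_{0r}$ with the norm-bounded, mean-zero set $B_{0r}\subset L^2(\mathbf{P})$, invoking Banach--Alaoglu plus reflexivity for weak compactness, Eberlein--\v Smulian for weak sequential compactness, and the density of $\Theta$ in $L^2(\mathbf{P})$ together with the uniform norm bound to match $\tau_\Theta$ with the weak $L^2$-topology on $B_{0r}$; only then do you obtain~(iii), either by the compact--Hausdorff trick or by the same truncation estimate the paper uses. Your route buys self-containment and avoids any appeal to metrizability of the weak topology in the non-separable case; the paper's route is shorter by leaning on the cited $\tau$-compactness results. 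You also correctly flag that the definition of $\Gamma_{0r}$ must use $\le r$ rather than $<r$ for the compactness claim to hold (a point the paper leaves implicit; indeed the paper's own $\Gamma_r$ in Section~6 uses the non-strict inequality). One cosmetic remark: the set $\{g:\int g\,d\mathbf{P}=0\}$ is a linear (not merely affine) subspace, though this does not affect the argument.
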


\begin{proof} 
The reasoning are akin to the proof of Lemma ~2.1 in
\cite{ei02}. For any signed measure
 $ {\mathbf G} \in \Gamma_{0r}$, any measurable set $A
\subseteq S $ and each $\phi \in \Theta$, we have
\begin{equation}
\label{4.2}
\int\limits_A |\phi|\, d |{\mathbf G}| \le
\alpha \bigg(\int\limits_A \phi^2\, d{\mathbf P} \bigg) +
\alpha^{-1}\bigg(\int\limits_A \left(\frac{d{\mathbf G}}{d{\mathbf P}}\right)^2 d{\mathbf P} \bigg)^2 d{\mathbf P}
\end{equation}

for all $\alpha > 0$. By the definition of $\Gamma_{0r}$, this implies (i), if $A =S$.

Fix $\epsilon > 0$. Let $\alpha = r/\epsilon$ and let $n = n(\epsilon)$ be such that
$$
\frac{r}{\epsilon} \bigg( \ \int\limits_{|\phi|>n} \phi^2\, d{\mathbf P} \bigg) < \epsilon.
$$
Then
$$
\alpha^{-1}\int\limits_{|\phi|>n} \left(\frac{d{\mathbf G}}{d{\mathbf P}} \right)^2 d{\mathbf P}
\le \epsilon.
$$
Therefore, by (\ref{4.2}), we get
$$
\int |\phi|\,d |{\mathbf G}| -
\int\limits_{|\phi|<n} |\phi|\,d |{\mathbf G}| < 2\epsilon
$$
Hence the map $\Gamma_{0r} \ni {\mathbf G} \to \int \phi\,d {\mathbf G} $
is $\tau$-continuous as continuous limit of functions
$$
\int\limits_{|\phi_1|<n} \phi\, d{\mathbf G}.
$$
Therefore the $\tau$ and $\tau_\Theta$-topologies coincide in $\Gamma_{0r}$.

Since the sets ~$\Gamma_{0r}$ are $\tau$-compact and
sequentially $\tau$-compact (see ~\cite{ar, bor}),
these sets are $\tau_\Theta$-compact and sequentially $\tau_\Theta$-compact.
This completes the proof of Lemma~\ref{l5.1}.
\end{proof}

We begin with the proof of upper bound (\ref{2.16}). Denote
$\eta = \rho_0^2(\mathfrak{cl}(\Omega_0),{\mathbf P})$ and fix $\delta,
0<2\delta<\eta$. It is clear that $\Gamma_{0,\eta-\delta} \subset \Lambda_{0\Theta}\setminus\Omega_0$.

For any $f_1,\ldots,f_l \in \Theta$, ${\mathbf G} \in
\Lambda_{0\Theta}$ and $\gamma > 0$, denote
$$
U(f_1,\ldots,f_l,{\mathbf G}, \gamma) =\left\{{\mathbf R}: \left|\int f_i d({\mathbf R}-{\mathbf G})\right| < \gamma, {\mathbf R} \in \Lambda_{0\Theta}, 1\le i \le l\right\}.
$$
Define the linear space
$$
\wt\Lambda_{0\Theta} = \bigg\{{\mathbf G} : {\mathbf G} = \sum_{i=1}^k \lambda_i {\mathbf G}_i, {\mathbf G}_i \in \Lambda_{0\Theta}, \lambda_i \in R^1, 1 \le i \le k,
k=1,2,\ldots\bigg\}.
$$
Define the $\tau_\Theta$-topology in $\wt \Lambda_{0\Theta}$.
It is clear that $ \Lambda_{0\Theta}\subset \wt \Lambda_{0\Theta}$.

Since $\Lambda_{0\Theta}$ is Hausdorff topological space, then the space $\Lambda_{0\Theta}$ is regular
(see Theorem~B2 in \cite{dem}). Therefore, for each
${\mathbf G} \in \Gamma_{0,\eta-\delta}$, there is open set $U(f_1,\ldots,f_l,{\mathbf G},\gamma) \subset \Lambda_{0\Theta}\setminus {\rm cl}\,(\Omega_0)$.
 The set $\Gamma_{0,\eta-\delta}$ is compact. Therefore there is finite covering of
$\Gamma_{0,\eta-\delta}$ by the sets
$$
U_1 =
U(f_{11},\ldots,f_{1l_1},{\mathbf G}_1, c_1),\ldots,U_m
=U(f_{m1},\ldots,f_{ml_m},{\mathbf G}_m, c_m),
$$
where $f_{ij} \in \Theta$,
${\mathbf G}_i \in \Lambda_{0\Theta}$ для $1\le j \le l_i$,
$1 \le i \le m$. Denote $U = \cup_{i=1}^m U_i$.

Therefore, for the proof of (\ref{2.16}), it suffices to estimate left-hand side
$$
\widehat {\mathbf P}_n({\mathbf P}^*_n\notin \widehat{\mathbf P}_n + a_nU) \ge (\widehat {\mathbf P}_n)^*({\mathbf P}^*_n\in \widehat{\mathbf P}_n + a_n\Omega_0).
$$
The problem was reduced to finite dimensional.

For any finite set $H= \{h_1,\ldots,h_m; h_i \in \Theta, 1\le i \le m\}$ and any set $\Psi \subset \Lambda_{0\Theta}$ denote
$$
\Psi_H = \{z = (z_1,\ldots,z_m): z_i = {\mathbf E} [h_i(X)], 1 \le i \le m\}.
$$
For all $i,\,j$, $ 1 \le j \le l_i$, $1\le i \le m$, define
signed measures ${\mathbf F}_{ij}$ having the densities
$\frac{d{\mathbf F}_{ij}}{d{\mathbf P}}= f_{ij} - \EE[ f_{ij}(X)]$. Define linear spaces
$$
L = \bigg\{ {\mathbf F}: {\mathbf F} = \sum_{i=1}^k\sum_{j=1}^{l_i} \lambda_{ij} {\mathbf F}_{ij},
\lambda_{ij}\in R^1, 1 \le j \le l_i, 1 \le i \le m\bigg\}
$$
and
$$
\wt l = \Big\{ f : f = \frac{d{\mathbf F}}{d{\mathbf P}}, {\mathbf F} \in L\Big\}.
$$
Let $h_1,\ldots,h_{m_1},h_{m_1+1},\ldots,h_m$ be linear independent functions in $\wt l$ such that ${\mathbf E} [h_i^2(X)] = 2(\eta - \delta), 1 \le i \le m_1$, and ${\mathbf E} [h_i^2(X)] = 0, m_1 < i \le m$. Define the sets $H = \{h_1,\ldots,h_{m_1}\}$ and $H_1 = \{h_{m_1+1},\ldots,h_m\}$. We have
$$
(\mathfrak{cl}(\Omega_0))_H \cap (\Gamma_{\eta-\delta})_H = \emptyset.
$$
Denote $\Omega_1 = \{{\mathbf G} : {\mathbf G} \in \mathfrak{cl}(\Omega_0), G_{H} \ne 0\}$ and $\Omega_2 = \{{\mathbf G} : {\mathbf G} \in \mathfrak{cl}(\Omega_0), G_{H_1} \ne 0\}$.

It is clear that
$\widehat {\mathbf P}_n({\mathbf P}^*_n\notin \widehat{\mathbf P}_n + a_n\Omega_1) = 0$ almost surely.

Therefore
$$\widehat {\mathbf P}_n({\mathbf P}^*_n\in \widehat{\mathbf P}_n + a_n\Omega_0) \le \widehat {\mathbf P}_n({\mathbf P}^*_{nH}\in \widehat{\mathbf P}_{nH} + a_n\Omega_{1H}) \le
$$
$$
\widehat {\mathbf P}_n({\mathbf P}^*_{nH}\notin \widehat{\mathbf P}_{nH} + a_n (\Gamma_{0,\eta-\delta})_{H}).
$$
Define the sets
$$\widehat\Gamma_{0c}= \left\{f: f = \frac{dF}{d{\mathbf P}},
F \in \Gamma_{0c}\cap L\right\},\quad  c>0.
$$

There is a finite number of functions
$q_1,\ldots,q_{l} \in \widehat \Gamma_{0,\eta-2\delta}$,
such that
$$
\EE[q_i(X)]=0,\quad \EE[q_i^2(X)] = 2(\eta - 2\delta),\quad
1 \le i \le l
$$ and
\begin{equation*}
\widehat\Gamma_{0,\eta-2\delta}\cap L \subset \cap_{i=1}^l V(q_i)\cap L \subset \widehat\Gamma_{0,\eta-\delta} \cap L,
\end{equation*}
where
$$
 V_i=V(q_i)=\left\{{\mathbf G}: \left|\int q_i d{\mathbf G}\right| < 2(\eta - 2\delta), {\mathbf G} \in \Lambda_{0\Theta}\right\}.
$$
Denote
$$
V = \bigcap_{i=1}^k V_i.
$$
Since $\widehat\Gamma_{0,\eta-\delta} \subset U\cap L$, then $V\subset U$. Therefore
$$
\Omega_1 \subset W = \Lambda_{0\Theta} \setminus V.
$$
Therefore it suffices to estimate the right-hand side
$$
\log (\widehat {\mathbf P}_n)^*({\mathbf P}^*_{k_n} \in \widehat {\mathbf P}_n + a_n \Omega_1)\le\log \widehat {\mathbf P}_n({\mathbf P}^*_{k_n} \in \widehat {\mathbf P}_n + a_n W).
$$
We have
\begin{equation}\label{qq7}
\begin{split}&
\widehat {\mathbf P}_n({\mathbf P}^*_{k_n} \in \widehat {\mathbf P}_n + a_n W) \le \sum_{i=1}^k \widehat {\mathbf P}_n
({\mathbf P}^*_{k_n} \notin \widehat {\mathbf P}_n + a_n U_i)
\\&
=
\sum_{i=1}^k \widehat {\mathbf P}_n \left(\int q_i \,d({\mathbf P}^*_{k_n} -\widehat {\mathbf P}_n) - 2a_n(\eta-2\delta) >0\right).
\end{split}
\end{equation}
Therefore it suffices to prove that, for each  $f \in \Theta, \EE[f(X)]=0$,
$\EE[ f^2(X)]=\eta-2\delta$ and $n > n_0(\epsilon,f)$, we have
\begin{equation}\label{2005}
\begin{split}&
 (k_na_n^2)^{-1}\log \widehat {\mathbf P}_n\left(\int f d({\mathbf P}^*_{k_n} -\widehat {\mathbf P}_n) > 2a_n(\eta-2\delta) \right)\\& \le
-2\frac{(\eta-2\delta)^2}{{\rm Var}\,[f(X_1)]}(1-\epsilon)=-2(\eta-2\delta)(1-\epsilon)
\end{split}
\end{equation}
with probability $\kappa_n(\epsilon,U(f,q))$.

Denote $s^2\doteq s^2_f\doteq s_n^2 = \frac{1}{n}\sum\limits_{i=1}^n
f^2(X_i) - \ov f^2$, where $\ov f=\frac{1}{n}\sum\limits_{i=1}^n
f(X_i)$. We put
 $\gamma = \frac{\sqrt{2}s\epsilon}{324\sigma}$ where $\sigma^2 ={\rm Var}\,[f(X_1)]=\eta- 2\delta$.

By Theorem 28 in \cite[Ch.~4]{pe}, we get ${\mathbf P}(|s_n^2 -
\sigma^2| > \epsilon) < \beta_{2n}(f)$ where
$\beta_{2n}(f)= C_1(f,\epsilon) n^{1-t/2}$.
 Therefore, for the proof of
(\ref{2005}), we can suppose that
\begin{equation}\label{6.3}
|s_n^2 - \sigma^2| < \epsilon.
\end{equation}
Define the set of events
$$
A_{nf}=\{X_1,\ldots,X_n: \max\limits_{1\le s \le n} |f(X_s)| <
\sigma\gamma a_n^{-1}\}.
$$
We have
\begin{equation*}
{\mathbf P}(A_{nf}) \ge 1 - n {\mathbf P}(|f(X_1)| > \sigma\gamma a_n^{-1}) =1
-nh\Big(\frac{a_n}{\sigma\gamma}\Big)\doteq 1 - \beta_{2n}.
\end{equation*}

Note that, by (\ref{qqq5}),
$nh\big(\frac{a_n}{\sigma\gamma}\big) \to 0$ as $n \to \infty$.
Therefore it suffices to prove  (\ref{2005}) if $A_{nf}$ holds.

The further reasoning are based on a slightly simplified version of Theorem ~3.2 in \cite{sau}. This version is provided below.

Let $Y_{1n},\ldots,Y_{k_n,n}$ be independent identically distributed random variables having probability measure ${\mathbf P}_n$, $\EE [Y_{1n}] = 0$, ${\rm Var}\,[Y_{1n}] = \sigma^2$,
$|Y_{in}| < \sigma\gamma a_n^{-1}$.
Denote
$$
S_n = \frac{1}{\sqrt{k_n}\sigma} \sum_{i=1}^{k_n} Y_{in}.
$$
Suppose that
\begin{equation}\label{qqh1}
a_n^{-2} z^{-2} \log {\mathbf E}[\exp\{za_n\sigma^{-1} Y_{1n}\}] <C \quad
\mbox{for all} \quad |z| < \kappa
\end{equation}
and
\begin{equation}\label{uuh2}
\omega = \frac{\sqrt{2}\kappa }{36\max\{1,C\}} > 1.
\end{equation}
Denote
$\Delta = \omega a_nk_n^{1/2}.$
\begin{theorem}\label{t6} Assume {\rm(\ref{qqh1})} and
{\rm(\ref{uuh2})}.
Then we have
\begin{multline}\label{uuh3}
{\mathbf P}(S_n > k_n^{1/2} a_n) \\
=
(1 - \Phi(k_n^{1/2}a_n))\exp\{L(k_n^{1/2}a_n)\}\bigg(1 +
\theta f_1(k_n^{1/2}a_n)\frac{k_n^{1/2}a_n+ 1}{\Delta}\bigg),
\end{multline}
where
$$
f_1(k_n^{1/2}a_n) = \frac{60(1 + 10\Delta^2\exp\{-(1
-\omega^{-1})\sqrt{\Delta}\})} {1 -
\omega^{-1}}
$$
and
\begin{equation}\label{be2}
-\frac{k_n a_n^2}{3\omega} < L(k_n^{1/2}a_n) < \frac{k_n
a_n^2}{2}\frac{1}{1 + \omega}.
\end{equation}
\end{theorem}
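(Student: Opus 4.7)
The plan is to prove this Cram\'er-type expansion by the classical exponential tilting (Esscher transform) technique, adapting the scheme of \cite{sau}. After rescaling $W_{in} = a_n \sigma^{-1} Y_{in}$, the hypothesis (\ref{qqh1}) becomes a uniform bound $z^{-2}\log \EE[\exp\{zW_{1n}\}] < C$ on $|z| < \kappa$, boundedness reads $|W_{in}| < \gamma$, and the target event is $\{\sum_{i=1}^{k_n} W_{in} > k_n a_n^2\}$. Introduce the tilted law ${\mathbf P}_h$ with $d{\mathbf P}_h/d{\mathbf P} \propto \exp\{h W_{1n}\}$ and pick $h = h_n$ so that $\EE_h[W_{1n}] = a_n^2$; the MGF bound together with (\ref{uuh2}) guarantees existence of such $h$, with $h$ of order $a_n^2$ and strictly inside the admissible range, in fact with $|h|$ at most a fraction $1/\omega$ of $\kappa$.

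The standard change-of-measure identity rewrites the probability as
\[
{\mathbf P}\Big(\sum_{i=1}^{k_n} W_{in} > k_n a_n^2\Big) = \exp\{-k_n I(a_n^2)\}\,\EE_h\big[\exp\{-h T_n\}\chi(T_n > 0)\big],
\]
with $T_n = \sum_{i=1}^{k_n}(W_{in}-a_n^2)$ and $I$ the Legendre transform of the log-MGF $\psi$ of $W_{1n}$. A Taylor expansion of $I$ around the origin gives $k_n I(a_n^2) = \tfrac12 k_n a_n^2 - L(k_n^{1/2}a_n)$, where $L$ collects the cubic and higher cumulant contributions. The two-sided estimate (\ref{be2}) follows from the Lagrange remainder after bounding $\psi'''$ uniformly on $|z|<\kappa$ via (\ref{qqh1}); the explicit coefficients $1/(3\omega)$ and $1/(2(1+\omega))$ reflect how much of the admissible MGF range is consumed by $h$. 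Combined with the local analysis of $T_n$ below, the factor $\exp\{-\tfrac12 k_n a_n^2\}$ produces the Mill's-ratio form $1-\Phi(k_n^{1/2}a_n)$ appearing in (\ref{uuh3}).

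To evaluate $\EE_h[\exp\{-hT_n\}\chi(T_n>0)]$, observe that under ${\mathbf P}_h$ the summands $W_{in}-a_n^2$ remain centered, bounded by $2\gamma$, with variance close to the original $\sigma_W^2$. A Berry--Esseen bound yields a Gaussian approximation for $T_n/\sqrt{k_n}$, with constant inflated by $(1-\omega^{-1})^{-1}$ reflecting the residual tilt room. Integrating the Gaussian density against $\exp\{-ht\}$ on $t>0$ supplies the leading contribution; splitting the integral at $t$ of order $\sqrt{\Delta}/h$ and bounding the far tail by Bernstein's inequality produces the term $10\Delta^2\exp\{-(1-\omega^{-1})\sqrt{\Delta}\}$ inside $f_1$, while the Berry--Esseen remainder near the origin gives the coefficient $60/(1-\omega^{-1})$. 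Collecting these yields the error factor $1 + \theta f_1(k_n^{1/2}a_n)(k_n^{1/2}a_n+1)/\Delta$ in (\ref{uuh3}).

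The main obstacle is the explicit bookkeeping of numerical constants: one has to verify that the choice $\omega = \sqrt{2}\kappa/(36\max\{1,C\})$ is exactly what makes the tilt range, Bernstein tail, Berry--Esseen remainder, and Taylor remainder of $\psi$ combine into the precise expressions in (\ref{be2}) and $f_1$. The conceptual architecture is the standard Cram\'er scheme fixed by \cite{sau}; the real labour lies in carrying these constants through the four estimates without loss.
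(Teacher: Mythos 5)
The paper itself gives no proof of this theorem: it is introduced with the sentence ``The further reasoning are based on a slightly simplified version of Theorem~3.2 in \cite{sau}. This version is provided below,'' and is then simply quoted as a known estimate from Saulis--Statulevi\v{c}ius. There is therefore no proof of the author's to compare against. At the structural level your outline matches the route one would in fact find in \cite{sau}: a Cram\'er exponential tilt, a Taylor/cumulant expansion of the Legendre transform to isolate the Cram\'er series $L$, and a Berry--Esseen-plus-tail-splitting analysis of the tilted sum to produce $f_1$.

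That said, what you have written is a plan, not a proof. The entire content of Theorem~\ref{t6} is the explicit numerics: the precise form of $f_1$, the two-sided bound on $L$ in (\ref{be2}), and the exact role of $\omega$ and $\Delta$. You explicitly defer all of that bookkeeping to ``the real labour,'' but that bookkeeping \emph{is} the theorem; without carrying the constants through the tilt, the $\psi'''$ bound, the Berry--Esseen step and the Bernstein tail, you have not proved (\ref{uuh3})--(\ref{be2}) but only described the genre of argument that should yield them. A smaller concrete slip: after rescaling $W_{in}=a_n\sigma^{-1}Y_{in}$, hypothesis (\ref{qqh1}) reads $a_n^{-2}z^{-2}\log\EE[\exp\{zW_{1n}\}]<C$ for $|z|<\kappa$, i.e.\ $z^{-2}\log\EE[\exp\{zW_{1n}\}]<Ca_n^2$, not $<C$ as you state; the factor $a_n^2$ is precisely what pins the tilt parameter to the scale needed for the bounds $-k_na_n^2/(3\omega)<L<k_na_n^2/(2(1+\omega))$, and dropping it would leave the orders of magnitude in your sketch incoherent.
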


Note that, if $\omega > 16$ and $a_nk_n^{1/2} > 100$, then we have
\begin{equation}\label{be1}
|\theta_1 f_1(k_n^{1/2}a_n)|\frac{k_n^{1/2}a_n + 1}{\Delta} < 6.
\end{equation}

If $|z| < \kappa$ and $|f(X_i)| < \sigma\gamma a_n^{-1}$,
$1 \le i \le n$, then
\begin{equation*}
\begin{split}&
\log \EE_{\widehat {\mathbf P}_n}\{\exp\{za_n(f(X^*_1)- \ov f)/s\}\}\\&
=
\log\bigg[\frac{1}{n}\sum_{l=1}^n \exp\{za_n(f(X_i) - \ov
f)/s\}\bigg] \\&
= \log \Big(1\! +\! \frac{z^2a_n^2}{2}\! +\!
\frac{\theta^3 z^3a_n^3s^{-3}}{6n} \sum_{i=1}^n (f(X_i)\! -\! \ov f)^3
\exp\{\theta z a_n(f(X_i)\! -\! \ov f)/s\}\Big)
\\& \doteq \tau_n,
\end{split}
\end{equation*}
where $ 0 <\theta <1 $.

Since
$$
\exp\{\theta z a_n (f(X_1) - \ov f)/s\} <
\exp\{2\gamma\kappa\theta\sigma s^{-1}\}
\doteq R,
$$
using $\log(1+x) < x$, $x > 0$, we get
\begin{equation*}
\tau_n < \log\Big(1 + \frac{z^2a_n^2}{2}(1 + \gamma\kappa\sigma R
s^{-1})\Big) < \frac{z^2a_n^2}{2}(1 + \gamma\kappa\sigma R
s^{-1}) = z^2 a_n^2 D
\end{equation*}
where $D = \frac{1 + \gamma\kappa R\sigma s^{-1}}{2}$.

If
\begin{equation*}
\kappa = \frac{s}{2\gamma\sigma},
\end{equation*}
then $R < 3$ and $D < 2$. Therefore
$$
\omega > \frac{9}{2\epsilon}, \quad L(k_n^{1/2}a_n) \le \frac{k_n^{1/2}a_n^2}{2}\frac{\epsilon}{9/2+\epsilon}.
$$
Hence, by (\ref{uuh3}) and (\ref{be1}), we get
{\allowdisplaybreaks
\begin{align*}
  &
(k_n a_n^2)^{-1}\log \widehat {\mathbf P}_n\left( \int f d({\mathbf P}^*_{k_n} - \widehat {\mathbf P}_n)
> 2a_n (\eta - 2\delta)\right)
\\&
\le
-\frac{1}{2}s^{-2}(\eta - 2\delta)^2\left(1 - \frac{\epsilon}{9/2 +
\epsilon}\right)
\\&
 \quad + (\log 7 - \frac{1}{2}
\log(2\pi s^{-2}(1 + \epsilon))) (k_n a_n^2)^{-1}
\\&
\le
-\frac{1}{2}s^{-2}(\eta - 2\delta)^2\left(1 - \frac{\epsilon}{2}\right) + C(k_na_n^2)^{-1}
\\&
=-\frac{1}{2}s^{-2}(\eta - 2\delta)^2(1-\frac{\epsilon}{2})+ C(k_na_n^2)^{-1}
\\&
\le-\frac{1}{2} s^{-2}(\eta - 2\delta)^2\left(1 -
\frac{\epsilon}{2}\right) + C(k_na_n^2)^{-1}.
 \end{align*}

This implies (\ref{2005}), if (\ref{6.3}) and $|f(X_i)| < \sigma\gamma a_n^{-1}$,
$1\le i \le n$ hold.
This completes the proof of (\ref{2.16}).

If $\rho_0^2(\mathfrak{cl}(\Omega_0),{\mathbf P}) = \infty$, we put $\eta=L$. After that it suffices to implement the same reasoning as in the proof of  (\ref{2.16}).

The proof of lower bound (\ref{2.15}) is based on standard reasoning (see \cite{ac, dem, san} and references therein) and estimates of Theorem \ref{t6}.
 For any $\delta > 0$ there is open set
$U = U(f_1,\ldots,f_l,{\mathbf G},\gamma)$ such that $U \subset \mathfrak{int}(\Omega_0)$ and
$\rho_0^2(U,{\mathbf P}) < \eta + \delta$, $\rho_0^2({\mathbf G},{\mathbf P}) < \eta+\delta$.
Therefore it suffices to find lower bound for the asymptotic
$$
(k_na_n^2)^{-1} \log \widehat {\mathbf P}_n ({\mathbf P}^*_k \in \widehat {\mathbf P}_n + a_n U).
$$
Arguing similarly to the proof of upper bound, we can suppose that the signed measure
 ${\mathbf G}$ has the density $g= \frac{d{\mathbf G}}{d{\mathbf P}}= \sum\limits_{i=1}^l \lambda_i f_i$,
 $f_i \in \Theta$. Thus the problem is finite dimensional.

We fix $\lambda, 0 < \lambda <1$ such that $\lambda {\mathbf G} \in U$.
Note that  $\lambda $ may be defined arbitrary from some vicinity of ~1. Define the set $U_1 = U \cap U(g,{\mathbf G},(1-\lambda)^2\|g\|^2)$. It is clear that we can choose $\lambda$
such that $\rho_0^2(U_1,{\mathbf P}) \le
\frac{1}{2}\lambda^2 \|g\|^2$.

\begin{lemma} \label{l5.2} There is simplex
$\wt U \subset U_1$ bounded the hyperplane
 $$
 \Pi = \big\{{\mathbf R}: \int g \,d{\mathbf R} = \lambda^2 \|g\|^2, {\mathbf R} \in \Lambda_{0\Theta}\big\}
 $$
 and the hyperplanes
$$
\Pi_i = \big\{{\mathbf R}: \int g_i d{\mathbf R} = c_i, {\mathbf R} \in \Lambda_{0\Theta}\big\},
$$ with
 $g_i \in \Theta$, $1 \le i \le l$ such that
$\rho_0^2(\Pi_i,{\mathbf P}) \ge \lambda^2 \|g\|^2> \rho_0^2(\Pi,{\mathbf P})$.
\end{lemma}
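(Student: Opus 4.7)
The plan is to carry out an elementary Euclidean construction inside the Hilbert space $(L, \langle\cdot,\cdot\rangle_{\mathbf P})$ with $\langle r_1, r_2\rangle_{\mathbf P} = \int r_1 r_2\,d{\mathbf P}$, which under identification of measures with their densities is a finite-dimensional (at most $l$-dimensional) inner-product space. In this identification $\rho_0^2({\mathbf R},{\mathbf P}) = \tfrac12 \|r\|_{\mathbf P}^2$, each hyperplane $\{{\mathbf R}:\int \phi\,d{\mathbf R}=c\}$ becomes an affine hyperplane with $L^2({\mathbf P})$-distance $|c|/\|\phi\|_{\mathbf P}$ from the origin ${\mathbf P}$, and in particular $\rho_0^2(\Pi,{\mathbf P}) = \tfrac12 \lambda^4\|g\|^2 < \lambda^2\|g\|^2$ automatically secures the strict inequality in the conclusion. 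The problem thus reduces to constructing, in this Euclidean setting, an $l$-simplex inscribed in $U_1$ with one face in $\Pi$ and the remaining $l$ faces in hyperplanes at squared distance at least $2\lambda^2\|g\|^2$ from the origin.

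I would build $\wt U$ as a cone-simplex symmetric about the axis through ${\mathbf P}$ and ${\mathbf G}$. Fix an orthonormal basis $e_0 = g/\|g\|, e_1,\ldots,e_{l-1}$ of $L$ and take unit vectors $u_1,\ldots,u_l \in e_0^\perp\cap L$ realising the vertices of a regular $(l-1)$-simplex (so $\sum u_j = 0$ and $\langle u_j,u_k\rangle_{\mathbf P} = -1/(l-1)$ for $j\ne k$). Set base vertices $V_j = \lambda^2 g + R u_j$, which lie in $\Pi$ by the orthogonality $\langle u_j, g\rangle_{\mathbf P} = 0$, and apex $A = \mu g$ on the symmetry axis, for parameters $R>0$ and $\mu > \lambda$ to be tuned. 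The segment from the base centroid $\lambda^2 g$ to $A$ passes through $\lambda g$, so an appropriate choice of $R$ and $\mu$ puts $\lambda g$ in the interior of $\wt U$ while keeping $\wt U$ inside $U_1$.

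By symmetry the $j$-th side face $\Pi_j$, defined as the affine span of $\{A\}\cup\{V_k: k\ne j\}$, has unit normal parallel to $e_0 - \tfrac{(l-1)(\mu-\lambda^2)\|g\|}{R} u_j$ and passes through $A$, giving its $L^2({\mathbf P})$-distance from the origin equal to $\mu\|g\|\bigl[1 + (l-1)^2(\mu-\lambda^2)^2\|g\|^2/R^2\bigr]^{-1/2}$. The target bound $\rho_0^2(\Pi_j,{\mathbf P}) \ge \lambda^2\|g\|^2$ reduces to the algebraic inequality $R^2(\mu^2 - 2\lambda^2) \ge 2\lambda^2(l-1)^2(\mu-\lambda^2)^2\|g\|^2$, which admits a non-empty range of $R$ as soon as $\mu > \sqrt{2}\,\lambda$. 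I then take $g_i$ to be the $L^2({\mathbf P})$-normalised normal of $\Pi_j$, which lies in $\wt l \subset \Theta$ by construction, and read off $c_i$ as the corresponding level.

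The main obstacle is the simultaneous accommodation of (i) $\lambda g$ in the interior of $\wt U$, (ii) $\wt U \subset U_1$, and (iii) the side-face rate bound. The three free parameters $\mu$, $R$, and the radius $\gamma$ in the definition of $U$ give enough degrees of freedom: choose $\mu$ consistent with the $\int g\,d{\mathbf R}$-window allowed by $U_1$ and with $\mu > \sqrt{2}\,\lambda$, then pick $R$ inside the range dictated by the derived quadratic inequality, and finally shrink $\gamma$ so that the $f_i$-constraints defining $U$ accept the chosen $R$; the strict inequality in the Fisher-information bound leaves an open regime of admissible parameter triples. Affine independence of $\{A, V_1,\ldots,V_l\}$, automatic for $R>0$ and $\mu\ne\lambda^2$, ensures that $\Pi,\Pi_1,\ldots,\Pi_l$ are in general position and that $\wt U$ is a genuine non-degenerate $l$-simplex, completing the construction.
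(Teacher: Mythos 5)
Your construction---an explicit cone-simplex with apex on the $g$-axis and base in $\Pi$---is the same geometric device the paper's proof uses, and your computation of the side-face normals and the resulting distance formula $\mu\|g\|\bigl[1 + (l-1)^2(\mu-\lambda^2)^2\|g\|^2/R^2\bigr]^{-1/2}$ is correct as far as it goes. The gap is in the final parameter-matching. To reach the bound $\rho_0^2(\Pi_j,{\mathbf P})\ge\lambda^2\|g\|^2$ as literally stated you must take $\mu>\sqrt 2\,\lambda$, placing the apex $A=\mu g$ at $\int g\,dA=\mu\|g\|^2$ with $\mu>\sqrt 2\,\lambda$. But $U_1\subset U(g,{\mathbf G},(1-\lambda)^2\|g\|^2)$ confines every ${\mathbf R}\in U_1$ to $\bigl|\int g\,d{\mathbf R}-\|g\|^2\bigr|<(1-\lambda)^2\|g\|^2$, so the apex coordinate must satisfy $|\mu-1|<(1-\lambda)^2$; and the apex must also lie in $U=U(f_1,\ldots,f_l,{\mathbf G},\gamma)$ with $\gamma$ already fixed small enough that $U\subset\mathfrak{int}(\Omega_0)$. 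The paper explicitly takes $\lambda$ from a vicinity of $1$; in that regime the admissible window for $\mu$ collapses to $\{1\}$ while $\sqrt 2\,\lambda\to\sqrt 2>1$, so the range of $R$ produced by your quadratic inequality is vacuous. Your remark that shrinking $\gamma$ supplies a further degree of freedom goes in the wrong direction: making $\gamma$ smaller only tightens the constraint on the apex's $e_0$-coordinate, it never relaxes it.

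What the paper's own sketch actually establishes is weaker than the lemma's displayed inequality and uses a much flatter cone: apex $v=(1+\tfrac12 r^2)u$, $r\ll r_0$, over a base of radius $r$ about the $\rho_0$-minimizer $u$ of $U_1$ lying on $\Pi$, yielding only $\rho(0,\Pi_1)>\rho(0,u)$, that is $\rho_0^2(\Pi_i,{\mathbf P})>\rho_0^2(U_1,{\mathbf P})$. That strict inequality over the base level, uniformly over the finitely many side faces, is all the downstream estimate on $\exp\{-\rho_0^2(\Pi_i,{\mathbf P})a_n^2k_n(1+\epsilon_n)\}$ actually needs, and it is attainable with $\mu$ arbitrarily close to $\lambda^2$. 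To repair your argument, replace the target $\rho_0^2(\Pi_j,{\mathbf P})\ge\lambda^2\|g\|^2$ by the operative one $\rho_0^2(\Pi_j,{\mathbf P})>\rho_0^2(\Pi,{\mathbf P})=\tfrac12\lambda^4\|g\|^2$; your distance formula then reduces the requirement to $R^2(\mu^2-\lambda^4)>\lambda^4(l-1)^2(\mu-\lambda^2)^2\|g\|^2$, which has solutions for every $\mu>\lambda^2$ with $R$ of order $\sqrt{\mu-\lambda^2}$, a thin cone entirely compatible with the $U_1$-window.
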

The proof of Lemma \ref{l5.2} will be given later.
Let Lemma \ref{l5.2} be valid. Suppose that  $A_{bf}$ holds with $f=g$ and $f = g_i$, $1 \le i \le l$.
 Then, implementing Theorem \ref{t6} and Lemma \ref{l5.2}, we get
{\allowdisplaybreaks
\begin{align}
 &
\widehat {\mathbf P}_n({\mathbf P}^*_{k_n} \in \widehat {\mathbf P}_n + a_nU_1) \ge \widehat {\mathbf P}_n({\mathbf P}^*_{k_n} \in \widehat {\mathbf P}_n + a_n\wt U) \notag
\\ &
\ge
\widehat {\mathbf P}_n\left(\int g d({\mathbf P}^*_{k_n} - \widehat {\mathbf P}_n ) > \lambda^2 \|g\|^2a_n\right)\notag
\\ &
\quad -
\sum_{i=1}^l \widehat {\mathbf P}_n\left(\int g_i (d{\mathbf P}^*_{k_n} - \widehat {\mathbf P}_n) > a_n c_i\right) \label{6.150}
\\ &
\ge
\widehat {\mathbf P}_n\left(\int g d({\mathbf P}^*_{k_n} - \widehat {\mathbf P}_n ) >\lambda^2 \|g\|^2a_n \right)\notag
\\&
 - \sum_{i=1}^l\exp\{-\rho_0^2(\Pi_i,{\mathbf P}) a_n^2 k_n\notag
(1+\epsilon_n)\}
\end{align}}
where $\epsilon_n \to 0$ as $n \to \infty$.

Thus, it remains to implement Theorem  \ref{t6}
to the first addendum of right-hand side of  (\ref{6.150}).

By (\ref{uuh3}) and (\ref{be2}), we get
\begin{equation*}
\begin{split}&
(a_n^2k_n)^{-1} \log \widehat {\mathbf P}_n\left(\int g\, d{\mathbf P}^*_{k_n} - \widehat {\mathbf P}_n) > a_n \lambda^2\|g\|^2\right)
\\&
\ge
-\frac{1}{2} \lambda^2\|g\|^2\left(1 + \frac{1}{3\omega}\right) + c(k_na_n^2)^{-1}
\\&
=
-\frac{1}{2} \lambda^2\|g\|^2\left(1 + \frac{s}{9\sigma}\epsilon\right) + c(k_na_n^2)^{-1}.
\end{split}
\end{equation*}
This implies the lower bound.

\begin{proof}[Proof of Lemma \ref{l5.2}] The problem is reduced to the following. Let we be given a parallelepiped $U_1$ in $R^{l+1}$ and
$0 \notin U_1$. Let the point $u$ lies on the face $\Pi$ of parallelepiped $U_1$ and $|u| = \rho(0,U_1) = \inf_{x \in U_1} |x|$.
 One needs to point out simplex $V \subset U_1$ such that $\Pi \cap V$ is the face of $V$, $ u \in \Pi\cap V$ and, for any hyperplane $\Pi_1$ passing through
another face of V, it holds $\rho(0,\Pi_1) > \rho(0,u)$.
 Let the distance of $u$ from any face other than $\Pi$ exceeds $r_0$.
A simple trigonometric reasoning shows that the simplex $V$ can be defined as follows. We take the vertex $v =(1 + \frac{1}{2} r^2) u$
of $V$ where $ r << r_0$ and all other vertices $v_i, 1 \le i \le l$ belong $\Pi$ and $|v_i - u| =r$.

For the proof of
this statement it suffices to consider the case $l=1$.  Let us draw through $v$ the line $L$
intersecting the line $\Pi$ at the point $w$ and such that
$w$ is orthogonal to $L$. Then $|u-v| = |w - u|^2 |u|^{-1}(1 + o(1))$. Therefore, if the line
$L_1, v \in L_1$ intersect $\Pi$ at the point $z = c |w - u|^2 |u|^{-1},
c <1$, then $\rho(0,L_1) > |u|$.
\end{proof}

\begin{proof}[Proof of Theorem \ref{t2.5}] The reasoning are based on estimates of Theorem \ref{t2.6}.

We begin with the proof of upper bound (\ref{2.16a}) in the case of $\tau_{\Theta_{2h}}$-topology. Suppose that $\rho_0^2(\mathfrak{cl}(\Omega_0),{\mathbf P})< \infty$. If $\rho_0^2(\mathfrak{cl}(\Omega_0),{\mathbf P}) = \infty$, the reasoning are similar. It suffices to prove that, for any  $\epsilon > 0$, there holds
\begin{equation*}
(k_na_n^2)^{-1}\log (\widehat {\mathbf P}_n)^* ({\mathbf P}^*_{k_n} \in \widehat {\mathbf P}_n + a_n\Omega_0) \le
-\rho_0^2 (\mathfrak{cl}(\Omega_0),{\mathbf P})+\epsilon \quad a.\,s^*.
\end{equation*}
By Strong Law of Large Numbers and  (\ref{2.14}), for any $f \in \Theta$
there holds
\begin{equation}\label{v1}
s^2_n(f) \to \sigma^2(f) \quad a.\,s.,
\end{equation}
with $\sigma^2(f) < \infty$.

By (\ref{qqq9}) and (\ref{2.13}), for any $\delta> 0$ we have
\begin{equation}\label{v5}
\begin{split}&
{\mathbf P}(\max_{i\ge l} a_i |f(X_i)| \le \delta ) = \prod_{i=l}^\infty (1- {\mathbf P}(|f(X_i)| > \delta a_s^{-1}))\\&
\ge \prod_{i=l}^\infty (1- h(a_i/\delta)) \ge \exp\left\{-\sum_{i=l}^\infty h(a_i/\delta)\right\} = 1+ o(1)
\end{split}
\end{equation}
as $l \to \infty$.

For each $k$
\begin{equation}\label{v12}
{\mathbf P}(\max_{1\le i\le k} a_n |f(X_i)| > \delta) = o(1) \quad\hbox{as $n \to \infty$}.
\end{equation}

Note that $\max\limits_{i\ge k} a_i |f(X_i)| < \delta
$
implies $\max\limits_{k\le i\le n} |f(X_i)| < \delta a_n^{-1}$. Therefore, by (\ref{v5}) and (\ref{v12}), we get
\begin{equation}\label{v3}
\max_{1\le s\le n} |f(X_s)| < \delta a_n^{-1} \quad a.\,s.
\end{equation}

Using (\ref{v1}) and (\ref{v3}), we can implement the same technique for the proof  of
(\ref{2005}), as in the proof of
(\ref{2.15}) in Theorem ~\ref{t2.6}. This completes the proof of ~(\ref{2.16a}).
\end{proof}

For the proof of (\ref{2.16a}) in the case of  $\tau_{\Theta_{t}}$-topology, it suffices to show that, for any $\delta>0$, there holds
\begin{equation} \label{g1}
I_k\doteq {\mathbf P}(\max_{i>k} a_i |f(X_i)| > \delta) = o(1) \quad\hbox{as $k \to \infty$}.
\end{equation}

We have
\begin{equation*}
I_k \le \sum_{i=k}^\infty {\mathbf P}(f(X_i) > \delta a_i^{-1})
=
\sum_{i=k+1}^\infty (i-k) {\mathbf P}(\delta a_{i-1}^{-1} < |f(X_1)| \le \delta a_i^{-1}) \doteq J_k.
\end{equation*}
Define the function $u(x) =\delta a_{i-1}^{-1}+ \delta(x - a^{-1}_{i -1})$, if
$ x \in [a_{i-1}^{-1}, a_i^{-1})$. Define the inverse function $v(y) = \inf \{t: u(t) = y,
t \in R^1\}$. Define the distribution function  $F(x) = {\mathbf P}(|f(X_1)|<x), x \in R^1_+$.

Then
\begin{equation*}
J_k \le 2 \int\limits_{a_k^{-1}}^\infty v(x) d\,F(x) \le
2\int\limits_{a_k^{-1}}^\infty x^t d\,F(x)= o(1) \quad\hbox{при $k \to \infty$}.
\end{equation*}
This implies (\ref{g1}).

The proof of lower bound (\ref{2.15a})  is based on similar reasoning and is omitted.

\section{Proof of Theorem \ref{t2.1} }

For all $r > 0$ define the set
 $$\Gamma_r = \big\{\ov {\mathbf G} \in \Lambda_0^2:
\rho_{0b}^2(\ov {\mathbf G} : {\mathbf P}) \le r\big\}.$$

\begin{lemma}
\label{l4.1} Let {\rm(\ref{2.6})} hold. Then

{\rm(i)} $\Gamma_r \subset \Lambda^2_{0\Phi}$,

{\rm(ii)} the set  $\Gamma_r$ is $\tau_\Phi$-compact and sequentially
 $\tau_\Phi$-compact set in $\Lambda^2_{0\Phi}$,

{\rm(iii)} the $\tau$ and $\tau_\Phi$-topologies coincide in $\Gamma_r$.
\end{lemma}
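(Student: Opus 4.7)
The plan is to reduce the statement to the one-coordinate result of Lemma~\ref{l5.1}, exploiting that both $\Lambda_{0\Phi}^2$ and the rate function $\rho_{0b}^2$ respect the product structure. For any $\ov{{\mathbf G}}={\mathbf G}_2\times{\mathbf G}_1\in\Gamma_r$, the bound
\[
\nu\rho_0^2({\mathbf G}_2,{\mathbf P})+\rho_0^2({\mathbf G}_1,{\mathbf P})\le r
\]
forces ${\mathbf G}_2\in\Gamma_{0,r/\nu}$ and ${\mathbf G}_1\in\Gamma_{0,r}$, with the notation of Lemma~\ref{l5.1} applied to the class $\Phi$.

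Assertion (i) is then immediate from Lemma~\ref{l5.1}(i): each coordinate lies in $\Lambda_{0\Phi}$ and hence $\ov{{\mathbf G}}\in\Lambda_{0\Phi}^2$. For (ii), Lemma~\ref{l5.1}(ii) applied to $\Phi$ yields that $\Gamma_{0,r/\nu}$ and $\Gamma_{0,r}$ are $\tau_\Phi$-compact and sequentially $\tau_\Phi$-compact in $\Lambda_{0\Phi}$, so their Cartesian product is compact and sequentially compact in the product $\tau_\Phi$-topology. To conclude that $\Gamma_r$ itself is compact I would check that it is a closed subset of this product, which reduces to $\tau_\Phi$-lower semicontinuity of $(\,{\mathbf G}_2,{\mathbf G}_1)\mapsto\nu\rho_0^2({\mathbf G}_2,{\mathbf P})+\rho_0^2({\mathbf G}_1,{\mathbf P})$; this follows coordinatewise from the standard variational representation of $\rho_0^2$ as a supremum over bounded affine functionals of ${\mathbf G}$. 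Assertion (iii) is likewise coordinatewise: Lemma~\ref{l5.1}(iii) for $\Phi$ gives coincidence of $\tau$ and $\tau_\Phi$ on each factor's sublevel set, hence on their product and, a fortiori, on $\Gamma_r$.

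The main obstacle is verifying that Lemma~\ref{l5.1} does carry over with $\Theta$ replaced by $\Phi$. The proof of Lemma~\ref{l5.1} hinges on the Cauchy--Schwarz estimate (\ref{4.2}) together with $\int\phi^2\,d{\mathbf P}<\infty$ for every $\phi\in\Phi$; this integrability is exactly property (\ref{e2}), which the paper has already derived from the tail condition (\ref{2.2}) via the Eichelsbacher--Schmock truncation argument of Lemma~2.5 in \cite{ei03}. Granted this, the truncation step of Lemma~\ref{l5.1} runs identically, yielding uniform $\tau$-continuity of ${\mathbf G}\mapsto\int\phi\,d{\mathbf G}$ on each level set and thereby reducing $\tau_\Phi$ to $\tau$ on $\Gamma_r$. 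Here the hypothesis (\ref{2.6}) cited in the statement should be read as its $\Phi$-analogue (\ref{2.2}), which is the relevant tail condition for the $\tau_\Phi$-setup of Theorem~\ref{t2.1}.
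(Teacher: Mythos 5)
Your proposal is correct and follows the same route the paper merely gestures at when it writes that the proof ``is akin to the proof of Lemma~\ref{l5.1} and is omitted'': reduce coordinatewise via $\Gamma_r\subset\Gamma_{0,r/\nu}\times\Gamma_{0,r}$, invoke the $\Phi$-analogue of Lemma~\ref{l5.1} (whose only prerequisite, $\int\phi^2\,d{\mathbf P}<\infty$ for $\phi\in\Phi$, the paper secures as (\ref{e2})), and pass to the product. The two supplementary points you flag --- $\tau_\Phi$-closedness of $\Gamma_r$ via lower semicontinuity of $\rho_{0b}^2$ as a sum of suprema of continuous affine functionals, and the reading of ``(\ref{2.6})'' as its $\Phi$-counterpart (\ref{2.2}) --- are exactly the details the paper leaves implicit, and you handle them correctly.
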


The proof of Lemma \ref{l4.1} is akin to the proof of Lemma  \ref{l5.1} and is omitted.

The same reasoning as in the proof of Lemma \ref{l4.1} can be repeated  in the case of
 $\tau_\Psi$-topology. Thus the sets
 $\Gamma_{0r}$ are $\tau_\Psi$-compacts as well.

In Lemmas 4.2--4.5 given below we suppose that the assumptions of Theorem  \ref{t2.1} are satisfied.

For all $u,v\in R^k$ denote $u'v$ the inner product of vectors $u$ and $v$.
For all $f \in \Phi$ and all ${\mathbf G}\in \Lambda_{0\Phi}$ denote
$\langle f,{\mathbf G}\rangle = \int f\, d{\mathbf G}$.

Let $f_1,\ldots,f_{k_1}, g_1,\ldots,g_{k_2} \in \Phi$ and $ {\mathbf G} \in
\Lambda_{0\Phi}$. Let $\EE [f_i(X)]=0$, $\EE [g_j(X)] =0$ for $1\le
i\le k_1$, $1 \le j \le k_2$. Define covariance matrices
$$
R_f =\{ \EE [f_i(X) f_j(X)]\}_{i,j=1}^{k_1}\ \text{ and } \  R_g = \{ \EE [g_i(X)
g_j(X)]\}_{i,j =1}^{k_2}.
$$
 Denote $\vec{f} = \{f_i\}_{i=1}^{k_1}$,
$\vec{g}=\{g_i\}_{i=1}^{k_2}$ and $\ov g_i =
\frac{1}{n}\sum\limits_{l=1}^n g_i(X_l)$, $1\le i \le k_2$.

By   Dawson-Gartner Theorem (see  \cite{dem} Theorem 4.6.9 and \cite{leo}),
Theorem \ref{t2.1} follows from Lemma \ref{l4.2} given below. Note that the de Acosta \cite{ar} approach (see section 5)
also allows to deduce Theorem \ref{t2.1} from Lemma \ref{l4.2}.

\begin{lemma}\label{l4.2} For random vectors
\begin{multline*}
\vec{U}_n( \vec{X}) = \bigg( \frac{1}{n}\sum_{i=1}^n
f_1(X_i),\ldots, \frac{1}{n}\sum_{i=1}^n f_{k_1}(X_i),
\\
\frac{1}{n}\sum_{i=1}^n g_1(X^*_i)- \ov
g_1,\ldots,\frac{1}{n}\sum_{i=1}^n g_{k_2}(X^*_i) - \ov
g_{k_2}\bigg)
\end{multline*}
LDP holds, that is, for every  $\Omega\subset
R^{k_1+k_2}$, there holds
\begin{equation}\label{4.3}
\liminf_{n\to\infty} (nb_n^2)^{-1} \log {\mathbf P}_n(\vec{U}_n(\vec{X}) \in b_n\Omega) \ge -
\inf_{x \in {\rm int} (\Omega)} x'I_{f,g}x
\end{equation}
 and
 \begin{equation}\label{4.4}
\limsup_{n \to \infty} (nb_n^2)^{-1} \log {\mathbf P}_n(\vec{U}_n(\vec{X})
\in b_n\Omega) \le - \inf_{x \in {\rm cl} (\Omega)} x'I_{f,g}x
\end{equation}
where, for each $x =(y,z) \in R^{k_1+k_2}$, $y \in R^{k_1}$ and $z \in R^{k_2}$,
$$
x' I_{f,g} x = \sup_{t\in R^{k_1}, s\in R^{k_2}} \left(t'y +s'z -
\langle t'f,H\rangle - \frac{1}{2} t'R_ft -
\frac{1}{2}s'R_g s\right).
$$
Note that, if there is  $R^{-1}_f$ and $R^{-1}_g$, then
$$
x' I_{fg}x = \frac{1}{2}\big(y -\langle f,H\rangle\big)' R_f^{-1}
\big(y - \langle f,H\rangle \big) + \frac{1}{2} z'R^{-1}_g z.
$$
\end{lemma}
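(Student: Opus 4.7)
The plan is to derive the finite-dimensional LDP via a G\"artner--Ellis type argument. Set
\[
\Lambda_n(t,s) = (nb_n^2)^{-1}\log {\mathbf E}_{{\mathbf P}_n}\exp\{nb_n(t'\vec{U}_{n,1} + s'\vec{U}_{n,2})\},
\]
where $\vec U_{n,1}$ and $\vec U_{n,2}$ denote the two blocks of $\vec U_n(\vec X)$. I will show that $\Lambda_n(t,s)\to\Lambda(t,s) = \langle t'f,{\mathbf H}\rangle + \tfrac12 t'R_f t + \tfrac12 s'R_g s$. Since $x'I_{f,g}x$ as defined in the lemma is exactly the Legendre transform of $\Lambda$, the upper bound~\eqref{4.4} will then follow from the standard exponential Chebyshev argument combined with a finite cover by half-spaces tangent to the compact level sets of $x'I_{f,g}x$, and the lower bound~\eqref{4.3} from an exponential tilt at an arbitrary interior point.

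The key structural observation is that, by conditional independence of the bootstrap given $\vec X = (X_1,\ldots,X_n)$, the joint MGF factors as
\[
{\mathbf E}_{{\mathbf P}_n}\bigl[e^{b_n t'\sum_i f(X_i)}\,M_n(s,\vec X)\bigr],\qquad M_n(s,\vec X) = \Bigl(\tfrac1n\sum_{l=1}^n e^{b_n s'(g(X_l)-\ov g)}\Bigr)^n.
\]
For the ``deterministic'' factor, I would write ${\mathbf P}_n = {\mathbf P} + b_n{\mathbf H}_n$ and use $\EE[f]=0$ together with ${\mathbf H}_n(S)=0$ to obtain, after truncation at $|t'f|\le b_n^{-1}$ justified by \eqref{2.2} and assumption~B1,
\[
{\mathbf E}_{{\mathbf P}_n}\bigl[e^{b_n t'f(X)}\bigr] = 1 + \tfrac{b_n^2}{2}t'R_f t + b_n^2\langle t'f,{\mathbf H}_n\rangle + o(b_n^2).
\]
Raising to the $n$-th power and invoking ${\mathbf H}_n\to{\mathbf H}$ in $\tau_\Phi$ (assumption~A) together with \eqref{e2} will produce the contribution $\tfrac12 t'R_f t + \langle t'f,{\mathbf H}\rangle$ to $\Lambda(t,s)$.

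For the random factor $M_n(s,\vec X)$, I would expand the exponentials on the event $A_n = \{\max_{l\le n}|g(X_l)|\le \gamma b_n^{-1}\}$, whose complement has probability at most $nh(\gamma b_n) = o(e^{-nb_n^2})$ by \eqref{2.2}. On $A_n$,
\[
\tfrac1n\sum_{l=1}^n e^{b_n s'(g(X_l)-\ov g)} = 1 + \tfrac{b_n^2}{2}s'\widehat R_g s + O(b_n^3\gamma),
\]
with $\widehat R_g = n^{-1}\sum_l (g(X_l)-\ov g)(g(X_l)-\ov g)' \to R_g$ a.s., which will give $(nb_n^2)^{-1}\log M_n(s,\vec X)\to \tfrac12 s'R_g s$. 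Quantitatively this is the same ingredient used in the proof of Theorem~\ref{t2.6} via Theorem~\ref{t6}, which I would reuse to produce the required concentration of $M_n$ around its deterministic limit at the correct exponential rate.

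Combining the two factors gives $\Lambda_n\to\Lambda$ pointwise, after which \eqref{4.4} is an exponential Chebyshev argument. For the lower bound \eqref{4.3} I would fix $x=(y,z)\in\mathfrak{int}(\Omega)$, choose a near-minimizer $(t_*,s_*)$ in the sup defining $x'I_{f,g}x$, tilt by $\exp\{nb_n(t_*'\vec U_{n,1}+s_*'\vec U_{n,2})-nb_n^2\Lambda_n(t_*,s_*)\}$, and control the mean and variance of $\vec U_n$ under the tilted law using the same MGF expansion. The main obstacle will be the random bootstrap factor $M_n(s,\vec X)$: because it depends on $\vec X$ it cannot be pulled out of the outer expectation against $e^{b_n t'\sum f(X_i)}$, so the joint MGF requires a simultaneous concentration estimate, uniform in $s$ on compacts and at rate $o(nb_n^2)$. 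This is precisely the point where assumption~B1 and the tail condition \eqref{2.2} are indispensable, and it is also the structural reason the moderate-deviation zone of Theorem~\ref{t2.1} is narrower than the one for the empirical measure alone (Theorem~\ref{t2.3}).
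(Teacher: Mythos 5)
Your overall strategy matches the paper's: both go through a G\"artner--Ellis type limit for the scaled cumulant generating function (the paper packages this as Lemmas \ref{l4.3}--\ref{l4.5}), with truncation at level $b_n^{-1}$ justified by \eqref{2.2} and B1, and the conditional factorization
$I_n = {\mathbf E}_n\bigl[e^{b_n\sum t'\vec f_n(X_l)}\,M_n(s,\vec X)\bigr]$
is the same starting point as \eqref{4.7}. You also correctly flag the central obstruction: $M_n(s,\vec X)$ is a random functional of $\vec X$ and cannot be pulled outside the outer expectation.

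Where the proposal falls short is that it never actually resolves that obstruction. You write that on the truncation event, $\tfrac1n\sum_l e^{b_n s'(g(X_l)-\ov g)} = 1 + \tfrac{b_n^2}{2}s'\widehat R_g s + O(b_n^3\gamma)$, and invoke $\widehat R_g\to R_g$ a.s.\ plus ``the same concentration as Theorem~\ref{t6}.'' This is not enough, for two reasons. First, you need to evaluate ${\mathbf E}_n$ of the product $e^{b_n\sum t'\vec f_n(X_l)}\cdot e^{\frac{nb_n^2}{2}s'\widehat R_g s}$, and an a.s.\ limit of $\widehat R_g$ gives no control over the tail contribution to this exponential moment; the expectation can in principle be dominated by rare configurations where $\widehat R_g$ is large. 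Second, Theorem~\ref{t6} is a conditional bound for the bootstrap sample given fixed $\vec X$; it says nothing about the fluctuation of the data-dependent quantity $M_n(s,\vec X)$, so it cannot supply the ``simultaneous concentration estimate'' you invoke. The piece you are missing is the algebraic step \eqref{4.8}, $\sum_l(s'(\vec g_n(X_l)-\ov g_n))^2 = \sum_l\phi_n^2(X_l) - n(s'\ov g_n - {\mathbf E}_n[s'\vec g_n])^2$ with $\phi_n(X_l)=s'(\vec g_n(X_l)-{\mathbf E}_n\vec g_n(X_1))$, which turns the argument of the exponential into a genuine sum over $l$ of i.i.d.\ terms so that ${\mathbf E}_n[\cdot]$ becomes an $n$-th power of a one-dimensional MGF. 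The quadratic correction $n(s'\ov g_n - {\mathbf E}_n[s'\vec g_n])^2$ is then controlled separately: on the event $A_{nr}=\{s'\ov g_n - {\mathbf E}_n[s'g_n]<rb_n\}$ it is $O(nr^2 b_n^4)=o(nb_n^2)$, and off $A_{nr}$ one uses H\"older together with the moderate-deviation bound \eqref{7.102} for $\ov A_{nr}$. Without this decomposition, the joint MGF is not reduced to a computable form and the pointwise convergence $\Lambda_n\to\Lambda$ is unproved. The cubic error terms also need the split into small/large pieces as in \eqref{4.9}--\eqref{4.11} before being absorbed, not a crude $O(b_n^3\gamma)$ bound: with $\gamma\sim b_n^{-1}$ the error is $O(b_n^2)$, not $o(b_n^2)$. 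Finally, your choice of G\"artner--Ellis for the upper bound and a direct exponential tilt for the lower bound is a harmless stylistic difference from the paper, which deduces both from Lemma~\ref{l4.5} via G\"artner--Ellis.
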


Lemma \ref{l4.2} follows from Lemmas  \ref{l4.3} and \ref{l4.4} given below.

\begin{lemma}
\label{l4.3} There hold
\begin{equation}\label{4.5}
\lim_{n\to \infty} (nb_n^2)^{-1} \log {\mathbf P}_n \Big(\max_{1\le i \le k_1}
\max_{1 \le l \le n} | f_i(X_l)| > b_n^{-1}\Big) = -\infty
\end{equation}
and
\begin{equation}\label{4.6}
\lim_{n\to \infty} (nb_n^2)^{-1} \log {\mathbf P}_n\Big(\max_{1\le i \le k_2}
\max_{1 \le l \le n} | g_i(X^*_l)| > b_n^{-1}\Big) = -\infty.
\end{equation}
\end{lemma}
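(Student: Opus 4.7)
The plan is to derive both (4.5) and (4.6) from straightforward union bounds, after which the defining property (2.2) of the class $\Phi$ and the uniform tail bound B1 on the signed measures $\mathbf{H}_m$ finish the job. The functions $f_i, g_i$ lie in $\Phi$, so both tools apply to each of them individually; since only a finite number of functions appear, any terms of order $\log k_1$ or $\log k_2$ are negligible compared with $nb_n^2$.

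For (4.5), I would first bound
\begin{equation*}
\mathbf{P}_n\Bigl(\max_{1\le i\le k_1}\max_{1\le l\le n}|f_i(X_l)| > b_n^{-1}\Bigr) \le \sum_{i=1}^{k_1} n\,\mathbf{P}_n(|f_i(X)|>b_n^{-1})
\end{equation*}
by a crude union bound. Since assumption A gives $\mathbf{P}_n = \mathbf{P} + b_n \mathbf{H}_n$, each summand splits as
\begin{equation*}
n\,\mathbf{P}_n(|f_i|>b_n^{-1}) \le n\,\mathbf{P}(|f_i|>b_n^{-1}) + nb_n\!\int\!\chi(|f_i|>b_n^{-1})\,d|\mathbf{H}_n|.
\end{equation*}
Taking logarithms and dividing by $nb_n^2$, the first piece tends to $-\infty$ by the defining property (2.2) of $\Phi$, while the second piece tends to $-\infty$ by assumption B1 (applied with $m=n$). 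Since there are only finitely many summands, the maximum over $i$ (and the $\log k_1$ overhead) does not affect the limit.

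For (4.6), the main additional observation is that the bootstrap contribution reduces to a $\mathbf{P}_n$-probability. Conditionally on $X_1,\ldots,X_n$, the bootstrap sample is i.i.d.\ with law $\widehat{\mathbf{P}}_n$, so a union bound together with $\mathbf{E}_{\mathbf{P}_n}[\widehat{\mathbf{P}}_n(A)] = \mathbf{P}_n(A)$ yields
\begin{equation*}
\mathbf{P}_n\Bigl(\max_{1\le i\le k_2}\max_{1\le l\le n}|g_i(X_l^*)|>b_n^{-1}\Bigr)\le\sum_{i=1}^{k_2}n\,\mathbf{E}_{\mathbf{P}_n}[\widehat{\mathbf{P}}_n(|g_i|>b_n^{-1})]=\sum_{i=1}^{k_2}n\,\mathbf{P}_n(|g_i|>b_n^{-1}).
\end{equation*}
This is exactly the quantity handled in the first step, so applying (2.2) and B1 to each $g_i$ closes the argument.

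I do not expect any real obstacle here: the lemma is essentially a negligibility statement that the moderate-deviation analysis needs in order to truncate the summands $f_i(X_l)$ and $g_i(X_l^*)$ at level $b_n^{-1}$. The only non-trivial ingredient is the identity $\mathbf{E}_{\mathbf{P}_n}[\widehat{\mathbf{P}}_n(A)]=\mathbf{P}_n(A)$, which reduces the bootstrap estimate to the classical one and lets assumption B1 control the $\mathbf{H}_n$-perturbation uniformly.
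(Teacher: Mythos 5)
Your proof is correct and follows essentially the same line as the paper: a union bound, the decomposition $\mathbf{P}_n = \mathbf{P} + b_n\mathbf{H}_n$ from assumption A, and an appeal to the defining property (2.2) of $\Phi$ together with B1 for the perturbation term. The only difference is in how (4.6) is reduced to the pattern of (4.5): the paper uses the deterministic containment that each $X_l^*$ equals some $X_j$, so $\max_{1\le l\le n}|g_i(X_l^*)| \le \max_{1\le j\le n}|g_i(X_j)|$ pointwise, whereas you pass through the tower-property identity $\mathbf{E}_{\mathbf{P}_n}[\widehat{\mathbf{P}}_n(A)]=\mathbf{P}_n(A)$ after a union bound over $l$. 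Both reductions land on the same quantity $n\,\mathbf{P}_n(|g_i(X_1)|>b_n^{-1})$ and the remaining estimate is identical; the paper's pathwise argument is marginally more economical since it sidesteps the conditioning step, but this is a matter of taste and not of substance.
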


\begin{proof} 
We have
$$
{\mathbf P}_n \Big(\max_{1\le i \le k_1} \max_{1 \le l \le n} | f_i(X_l)| >b_n^{-1}\Big)
 \le n\sum_{i=1}^{k_1} {\mathbf P}_n(|f_i(X_1)| > b_n^{-1})
$$
$$
\le
n\sum_{i=1}^{k_1} {\mathbf P}(|f_i(X_1)| > b_n^{-1}) + nb_n\sum_{i=1}^{k_1}
\int\chi(|f_i(X_1)| > b_n^{-1})\, d |{\mathbf H}_n|.
$$
By (\ref{2.2}) and B1, this implies (\ref{4.5}).

Since $g_1,\ldots,g_{k_2} \in \Phi$, the same statement holds for these functions as well and we get $$
{\mathbf P}_n(\max_{1\le i \le k_2} \max_{1 \le j \le n} | g_i(X_j)| > b_n^{-1}) =O(\exp\{-Cnb_n^2\})
$$
for any $C >0$. This implies (\ref{4.6}).

For each $h \in \Phi$ denote $ h_n(x) = h(x) \chi(|h(x)| <
b_n^{-1})$. Denote $\vec{f}_n = \{f_{in}\}_{i=1}^{k_1}$ и
$\vec{g}_n=\{g_{in}\}_{i=1}^k$. Define random vector
\begin{multline*}
\wt
U_n( \vec X) = \bigg( \frac{1}{n}\sum_{i=1}^n f_{1n}(X_i),\ldots,
\frac{1}{n}\sum_{i=1}^n f_{k_1n}(X_i),
\\
\frac{1}{n}\sum_{i=1}^n g_{1n}(X^*_i)- \ov g_{1n},\ldots,
\frac{1}{n}\sum_{i=1}^n g_{k_2n}(X^*_i)- \ov
g_{k_2n}\bigg)
\end{multline*}
with $\ov g_{\rm in} = \frac{1}{n}\sum\limits_{l=1}^n
g_{in}(X_l), 1 \le i \le k_2$. Define the sets of events
\begin{multline*}
W_n =
\Big\{X_1,\ldots,X_n: \max_{1\le i \le k_1} \max_{1 \le j \le n} |
f_i(X_j)| < b_n^{-1},\\ \max_{1\le i \le k_2} \max_{1 \le j \le n} |
g_i(X_j)| < b_n^{-1}\Big\}.
\end{multline*}

Denote $\ov W_n$ the complement of
 $W_n$.
By Lemma \ref{l4.2}, we get
\begin{equation} \label{mi}
\begin{split}&
{\mathbf P}_n(\vec{U}_n(\vec{X}) \in b_n\Omega) \le {\mathbf P}_n(\vec{U}_n(\vec{X}) \in b_n\Omega|\ov W_n)
{\mathbf P}(\ov W_n) +{\mathbf P}(W_n)
\\&
 < {\mathbf P}_n(\vec{U}_n(\vec{X}) \in b_n\Omega|\ov W_n)\exp\{o(nb_n^2)\} + \exp\{-Cnb_n^2(1+o(1))\}
\end{split}
\end{equation}
and
\begin{multline*}
{\mathbf P}_n(\vec{U}_n(\vec{X}) \in b_n\Omega)\ge {\mathbf P}_n(\vec{U}_n(\vec{X}) \in b_n\Omega|\ov W_n){\mathbf P}(\ov W_n)
\\
>
{\mathbf P}_n(\vec{U}_n(\vec{X}) \in b_n\Omega|\ov W_n)\exp\{o(nb_n^2)\}
\end{multline*}
where the constant $C$ in (\ref{mi}) can be choosed arbitrary.

Thus, Lemma \ref{l4.2} follows from Lemma \ref{l4.4} given below.
\end{proof}

\begin{lemma}\label{l4.4} For random vectors
$\wt U_n( \vec X) $ LDP holds, that is,
 {\rm(\ref{4.3})} and {\rm(\ref{4.4})} are valid for $\vec{U}_n( \vec{X})= \wt U_n( \vec{X})$.
\end{lemma}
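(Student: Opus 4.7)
The plan is to prove Lemma~\ref{l4.4} via the Gärtner-Ellis theorem in finite dimensions: for every $(t,s)\in R^{k_1}\times R^{k_2}$, I will show
\begin{equation*}
\Lambda_n(t,s):=(nb_n^2)^{-1}\log {\mathbf E}_{{\mathbf P}_n^n}\!\bigl[\exp\{nb_n(t'\wt Y_n+s'\wt Z_n)\}\bigr]\longrightarrow \Lambda(t,s),
\end{equation*}
where $\Lambda(t,s)=t'\langle\vec f,\mathbf H\rangle+\tfrac12 t'R_f t+\tfrac12 s'R_g s$. This $\Lambda$ is everywhere finite, smooth, and convex, and its Legendre transform is precisely the claimed rate $x'I_{f,g}x$, so Gärtner-Ellis then delivers the LDP (\ref{4.3})--(\ref{4.4}).

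First I peel off the bootstrap layer by conditioning on $\widehat{\mathbf P}_n$. Since $X_1^*,\ldots,X_n^*$ are conditionally i.i.d.\ from $\widehat{\mathbf P}_n$,
\begin{equation*}
{\mathbf E}\!\bigl[\exp\{nb_n s'\wt Z_n\}\mid\widehat{\mathbf P}_n\bigr]=\phi_n(s)^n\exp\{-nb_n s'\bar{\vec g}_n\},\quad \phi_n(s)=\tfrac1n\sum_{l=1}^n\exp\{b_n s'\vec g_n(X_l)\}.
\end{equation*}
The truncation keeps $|b_n s'\vec g_n|\le\|s\|\sqrt{k_2}$ bounded, and condition (\ref{2.2}) forces all moments of $\vec g$ to be finite (since $t\mapsto{\mathbf P}(|g|>t)$ is monotone and decays super-polynomially at $t=b_n^{-1}$). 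Taylor-expanding $\phi_n$ and then $\log\phi_n$, and absorbing the cubic-and-higher residues into $o(nb_n^2)$, I obtain after the cancellation of the $\bar{\vec g}_n$-linear terms
\begin{equation*}
n\log\phi_n(s)-nb_n s'\bar{\vec g}_n=\tfrac{nb_n^2}{2}s'V_n s+o(nb_n^2),
\end{equation*}
with $V_n$ the empirical covariance of $\vec g_n$ under $\widehat{\mathbf P}_n$.

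Next I decouple the outer expectation ${\mathbf E}\exp\{b_n t'\sum_i\vec f_n(X_i)+\tfrac{nb_n^2}{2}s'V_n s\}$ via Hölder with exponent $p>1$ slightly greater than $1$. The $\vec f$-factor is a standard i.i.d.\ MGF under ${\mathbf P}_n={\mathbf P}+b_n{\mathbf H}_n$; using A together with $\int\vec f_n\,d{\mathbf H}_n\to\langle\vec f,\mathbf H\rangle$, its normalized log tends to $t'\langle\vec f,\mathbf H\rangle+\tfrac{p}{2}t'R_f t$. For the $\vec g$-factor, the nonpositive $-\tfrac{qnb_n^2}{2}(s'\bar{\vec g}_n)^2$ can be dropped (for the upper bound) and the remainder factorizes as $\prod_i{\mathbf E}\exp\{\tfrac{qb_n^2}{2}(s'\vec g_n(X_i))^2\}$, whose normalized log tends to $\tfrac12 s'R_g s$ by a bounded-variable second-order expansion. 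Sending $p\downarrow1$ yields the upper bound $\Lambda(t,s)$; the matching lower bound is by reverse Hölder, or by restricting to the event $\{|V_n-R_g|<\epsilon\}$, whose complement has probability $\exp\{-cnb_n^2\}$ by a Bernstein inequality applied to the bounded summands $(s'\vec g_n(X_i))^2$.

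The main obstacle is this decoupling step, since $V_n$ and $\sum_i\vec f_n(X_i)$ are both functionals of the same $X_1,\ldots,X_n$; the Hölder argument works because the boundedness of $(s'\vec g_n)^2$ by $\|s\|^2 k_2 b_n^{-2}$, combined with ${\mathbf E}(s'\vec g_n)^2\to s'R_g s$, keeps the $\vec g$-MGF well-behaved. A secondary delicacy is making sure the Taylor remainder in $\log\phi_n$ is genuinely $o(nb_n^2)$ rather than the naive $O(nb_n^2\|s\|^3)$ one would get from just $\vec g\in L^2$; this uses the stronger super-polynomial tail decay of $|\vec g|$ forced by (\ref{2.2}), which gives finiteness of the third (and all higher) moments beyond the $L^2$-bound supplied by (\ref{e2}).
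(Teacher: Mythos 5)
Your high-level plan coincides with the paper's: both verify Gärtner-Ellis by computing the limit of $(nb_n^2)^{-1}\log{\mathbf E}_n[\cdots]$, and both begin by conditioning on $\widehat{\mathbf P}_n$ and Taylor-expanding the conditional MGF. However, there are two genuine gaps in your execution. First, the step "absorbing the cubic-and-higher residues into $o(nb_n^2)$" is not uniform in the sample: the cubic remainder is $O(b_n^3\sum_l|s'\vec g_n(X_l)|^3)$, which, even with the truncation $|\vec g_n|\leq b_n^{-1}$, is only $O(n)$ \emph{deterministically} (not $o(nb_n^2)$), and finiteness of third moments via (\ref{2.2}) controls it only in expectation, not pathwise. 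Since this random remainder sits inside the outer exponential, one cannot simply discard it; the paper's proof handles it by an $\epsilon$-split of each term into $\{|\phi_n|<\epsilon b_n^{-1}\}$ and $\{\epsilon b_n^{-1}\leq|\phi_n|<b_n^{-1}\}$ (the $\omega_{jn1},\omega_{jn2}$ decomposition around (\ref{4.10})), which turns the cubic contribution into $\epsilon\cdot O(nb_n^2)$ plus indicator sums that are then $o(nb_n^2)$ by (\ref{2.2}), with $\epsilon\to 0$ at the end. Your super-polynomial tail observation is necessary but not sufficient without this pathwise truncation.

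Second, your lower-bound device of restricting to $\{|V_n-R_g|<\epsilon\}$ and discarding its complement via Bernstein has a parameter conflict: the Bernstein exponent for bounded summands $(s'\vec g_n(X_i))^2\leq c|s|^2b_n^{-2}$ scales like $c\epsilon^2 nb_n^2$, which \emph{degenerates} as $\epsilon\to 0$. You need $\epsilon\to 0$ for the variance approximation and $c\epsilon^2$ larger than $\tfrac12 t'R_ft$ (the $\vec f$-rate on the Hölder complement), and for $|t|$ large these two requirements cannot be met simultaneously. The paper avoids this by choosing the cut at the \emph{mean} scale, $A_n=\{s'\ov g_n - {\mathbf E}_n[s'\vec g_n]<rb_n\}$ with a large but fixed $r$, so ${\mathbf P}(\ov A_n)\le\exp\{-cr^2nb_n^2\}$ with exponent tunable \emph{upward}; for the lower bound proper the paper instead conditions on $B_n$ (all truncated values $<\epsilon b_n^{-1}$), whose probability is $\exp\{-o(nb_n^2)\}$ independently of $t$. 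A further, smaller difference: the paper does not Hölder-decouple the $\vec f$- and $\vec g$-factors as you do; it retains the joint one-variable expectation after conditioning and uses Hölder only to dispose of the complement $\ov A_n$. Your decoupling is a clean alternative for the upper bound, but the "reverse Hölder" step you invoke for the lower bound is not spelled out and would require a separate justification (finiteness of a negative-moment term).
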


By Gartner-Ellis Theorem  (see \cite[ Lemma \ref{l4.4}]{dem}),
Lemma \ref{l4.4} follows from Lemma  \ref{l4.5} given below.

\begin{lemma}\label{l4.5} Let $f_i \in \Phi$,
$g_j \in \Phi$ for all $1\le i\le k_1$, $1\le j \le k_2$.
Then
\begin{multline}\label{314}
\lim_{n \to \infty} (nb_n^2)^{-1} \log {\mathbf E}_n \left[\exp \left\{ b_n \sum_{l=1}^n t'\vec{f}_n(X_l) +
b_n \sum_{l=1}^n s'(\vec{g}_n(X^*_l) - \ov g_n)\right\}\right]
\\
=
\langle t'\vec{f},H\rangle - \frac{1}{2}t'R_ft - \frac{1}{2} s'R_g s
\end{multline}
with $\ov g_n = (\ov g_{1n}, \ldots, \ov g_{{k_2}n})$.
\end{lemma}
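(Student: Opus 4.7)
The plan is to decouple the bootstrap randomness from the outer sample by conditioning on $X_1,\ldots,X_n$ and reducing the joint logarithmic MGF to two successive Taylor expansions. Since the bootstrap replicates $X^*_l$ are, given the data, i.i.d.\ with distribution $\widehat{\mathbf P}_n$, the inner conditional expectation factors and equals
\[
\left(\frac{1}{n}\sum_{j=1}^n\exp\{b_n s'(\vec g_n(X_j)-\ov g_n)\}\right)^{\!n}.
\]
The additive centering $\ov g_n$ kills the first-order term in the expansion of the inner logarithm, so what survives at scale $b_n^2$ is a quadratic in $s$ whose coefficient is the empirical covariance $n^{-1}\sum_j(\vec g_n(X_j)-\ov g_n)(\vec g_n(X_j)-\ov g_n)'$; the strong law of large numbers under $\mathbf P_n$, together with assumption~A, shows that this empirical covariance converges to $R_g$.

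For the remaining outer expectation, the $X_l$ are i.i.d.\ under $\mathbf P_n$, so the outer log-MGF is $n\log \EE_n[\exp\{b_n t'\vec f_n(X_1)\}]$. Expanding to second order in $b_n$ and using $\EE_n[h]=\EE_{\mathbf P}[h]+b_n\langle h,\mathbf H_n\rangle$ together with the $\tau_\Phi$-convergence $\mathbf H_n\to\mathbf H$ from~A, the linear-in-$t$ part produces a term governed by $\langle t'\vec f,\mathbf H\rangle$ at scale $b_n^2$, while the quadratic-in-$t$ part produces a term governed by $R_f$. Combining with the quadratic in $s$ coming from the bootstrap step, multiplying by $n$, and dividing by $nb_n^2$, one obtains the right-hand side of (\ref{314}).

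The truncation and tail control are routine once Lemma~\ref{l4.3} is in hand: the cut-off at level $b_n^{-1}$, combined with $\EE_{\mathbf P}|h|^2<\infty$ for $h\in\Phi$, keeps the cubic remainders $b_n^3\EE_n|h_n|^3$ in each Taylor expansion of order $o(b_n^2)$ by a uniform-integrability argument splitting the mass at threshold $\epsilon b_n^{-1}$; assumption~B1 similarly ensures that $b_n\int\chi(|h|>b_n^{-1})\,d|\mathbf H_n|\to 0$ fast enough that $\langle h_n,\mathbf H_n\rangle$ may be replaced by $\langle h,\mathbf H\rangle$ in the limit.

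The main obstacle I foresee is the interchange of the outer and inner expectations. The inner bootstrap logarithm, although deterministic to leading order as a function of the data, fluctuates around its limit by random amounts depending on $X_1,\ldots,X_n$, and these fluctuations are in principle amplified by the outer exponentiation. The cleanest route is an exponential tightness estimate, analogous to the one supplied by Theorem~\ref{t6} in the proof of Theorem~\ref{t2.6}, showing that the probability under $\mathbf P_n$ that the empirical covariance of $\vec g_n$ deviates from $R_g$ by more than $\epsilon$ decays faster than $\exp\{-Cnb_n^2\}$ for every $C>0$; this superexponential smallness kills the data-dependent fluctuation of the inner log-MGF when the outer expectation is taken.
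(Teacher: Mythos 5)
Your opening steps coincide with the paper's: condition on the data so that the bootstrap factor becomes $\bigl(\frac1n\sum_{j}\exp\{b_ns'(\vec g_n(X_j)-\ov g_n)\}\bigr)^n$, Taylor-expand twice, and use A, B1 and the truncation at $\epsilon b_n^{-1}$ to discard the cubic remainders and to pass from $\langle h_n,{\mathbf H}_n\rangle$ to $\langle h,{\mathbf H}\rangle$. The genuine gap is exactly at the step you flag yourself, and your proposed fix does not close it. You want to replace the random inner log-moment-generating function by its deterministic limit via an LLN for the empirical covariance of $\vec g_n$, backed by a bound ${\mathbf P}_n(|\,\widehat{\mathrm{cov}}-R_g|>\epsilon)\le e^{-Cnb_n^2}$ for every $C$. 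Even granting such a bound, it does not ``kill'' the bad event inside the outer expectation: after truncation at $b_n^{-1}$ the exponent $\frac{b_n^2}{2}\sum_l\phi_n^2(X_l)$ can be of order $n$, so the integrand on the bad event can be as large as $e^{cn}$, which swamps $e^{-Cnb_n^2}$ for any fixed $C$ (recall $b_n\to0$). A probability estimate alone is therefore insufficient; one needs, as the paper does, a H\"older argument pairing the superexponential probability bound (\ref{7.102}) with a uniform moment bound of the type ${\mathbf E}_n[W_n^{1+\delta}]\le\exp\{O(nb_n^2)\}$ (see (\ref{10.1})--(\ref{7.101})). Moreover, the concentration you invoke is not what Theorem \ref{t6} provides (that is a Cram\'er-type expansion for sums of bounded i.i.d.\ variables, i.e.\ for sample means, not for empirical second moments), and under the sole assumption $g_j\in\Phi$ a bound faster than $e^{-Cnb_n^2}$ for every $C$ for the empirical covariance is not established anywhere in the paper and would require a separate proof.

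The paper's route shows why this detour is unnecessary: identity (\ref{4.8}) splits the empirically centered quadratic into $\sum_l\phi_n^2(X_l)$ minus the term $n(s'\ov g_n-{\mathbf E}_n[s'\vec g_n(X_1)])^2$. Once this is done, the only genuinely data-dependent quantity that needs a probabilistic estimate is the sample-mean fluctuation $\ov g_n-{\mathbf E}_n[\vec g_n(X_1)]$, controlled on the event $A_{nr}$ by the moderate-deviation bound of Theorem 2.4 in \cite{ar} together with H\"older; everything else reduces, by the i.i.d.\ product structure, to $n\log{\mathbf E}_n[\exp\{\cdot\}]$ of a single observation, so only population moments under ${\mathbf P}_n$ (not the empirical covariance) enter, and these converge to $R_f$, $R_g$, $\langle t'\vec f,{\mathbf H}\rangle$ by A, B1, (\ref{e1})--(\ref{e2}). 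Note also that your factorization of the outer expectation into $n\log{\mathbf E}_n[\exp\{b_nt'\vec f_n(X_1)\}]$ is only legitimate after the inner random factor has been replaced by something deterministic or absorbed into the same single-observation exponential, since it depends on all of $X_1,\ldots,X_n$ through $\ov g_n$; this is precisely the point where your argument, as written, is circular. A matching lower bound (conditioning on the event $B_n$ of uniformly truncated observations, with ${\mathbf P}(B_n)=e^{-o(nb_n^2)}$, and the inequality $\log(1+x)\ge x-x^2$) is also needed and is absent from your sketch.
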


\begin{proof} We begin with the proof of upper bound  in  (\ref{314}). We have
{\allowdisplaybreaks
\begin{align}
 I_n&={\mathbf E}_n \bigg[\exp\bigg\{ b_n \sum_{l=1}^n t'\vec{f}_n(X_l) +
b_n \sum_{l=1}^n s'(\vec{g}_n(X^*_l) - \ov g_n)\bigg\}\bigg]\notag
\\ &
=
{\mathbf E}_n \bigg[\exp\bigg\{ b_n \sum_{l=1}^n t'\vec{f}_n(X_l) \bigg\}\prod_{l=1}^n {\mathbf E}_{\widehat {\mathbf P}_n}[\exp\{
 s'(\vec{g}_n(X^*_l) - \ov g_n)\}]\bigg]\notag
 \\&
 =
{\mathbf E}_n \bigg[\exp\bigg\{ b_n \sum_{l=1}^n t'\vec{f}_n(X_l)\bigg\}
\bigg(\frac{1}{n} \sum_{l=1}^n \exp\{b_ns'(\vec{g}_n(X_l) - \ov g_n)\}\bigg)^n\bigg]\notag
\\&
\le
{\mathbf E}_n \bigg[\exp \bigg\{ b_n \sum_{l=1}^n t'\vec{f}_n(X_l)\bigg\}\bigg(1 +
\frac{b_n^2}{2n} \sum_{l=1}^n(s'(\vec{g}_n(X_l) - \ov g_n))^2\notag
 \\&
\quad+ C(s,k_2)\frac{b_n^3}{6n}
\sum_{l=1}^n|s'(\vec{g}_n(X_l) - \ov g_n)|^3\bigg)^n\bigg]\notag
\\ &
\le
{\mathbf E}_n \bigg[\exp \bigg\{ b_n \sum_{l=1}^n t'\vec{f}_n(X_l) +
\frac{b_n^2}{2} \sum_{l=1}^n(s'(\vec{g}_n(X_l) - \ov g_n))^2 \notag
\\&
\quad+C(s,k_2) b_n^3
\sum_{l=1}^n |s'(\vec{g}_n(X_l) - \ov g_n)|^3\bigg\}\bigg]
\doteq I_{1n}.\label{4.7}
\end{align}}
The first inequality in (\ref{4.7}) follows from Taylor formula   and
\begin{equation*}
|s'(\vec{g_n}(x) - \ov g_n)| \le |s|\,\,|\vec{g_n}(x) - \ov g_n|< |s| 2k_2^{1/2}b_n^{-1}.
\end{equation*}

Denote $\phi_n(X_l)=s'(\vec{g}_n(X_l) - {\mathbf E}_n[\vec{g}_n(X_1)])$ with
$1 \le l \le n$.

By straightforward calculations, we get
\begin{equation}
\label{4.8}
 \sum_{l=1}^n(s'(\vec{g}_n(X_l) - \ov g_n))^2 =
\sum_{l=1}^n\phi_n^2(X_l) - n(s'\ov g_n - {\mathbf E}_n[s'\vec{g}_n(X_1)])^2.
\end{equation}
We have
\begin{multline}\label{4.9}
\sum_{l=1}^n|s'(\vec{g}_n(X_l) - \ov g_n)|^3
\\
\le
\sum_{l=1}^n|\phi_n(X_l)|^3 +
8n|s'(\ov g_n - {\mathbf E}_n[\vec{g}_n(X_1)])|^3\doteq 8V_1 +8nV_2.
\end{multline}
Since
\begin{equation}
\begin{split}&
|s'(\vec g_n(X_1) - {\mathbf E} [g_n(X_1)])|^3 \le |s|^{3/2} |\vec g_n(X_1) - {\mathbf E}_n
[g_n(X_1)]|^{3/2} \\&
= |s|^{3/2} \bigg( \sum_{j=1}^{k_2}(g_{jn}(X_1) -
{\mathbf E}_n[g_{jn}(X_1))^2\bigg)^{3/2} < 8|s|^3 k_2^{3/2} b_n^{-3},
\end{split}
\end{equation}
then
\begin{equation}\label{4.10}
\begin{split}
& b_n^3|V_1| = b_n^3\sum_{l=1}^n|\phi_n(X_l)|^3\chi(|\phi_n(X_l)|
\le \epsilon b_n^{-1}|s| )
\\&
+b_n^3\sum_{l=1}^n |\phi_n(X_l)|^3
\chi(|\phi_n(X_l)| \ge \epsilon b_n^{-1}|s| )
\\ &
\le
\epsilon |s| b_n^2\sum_{l=1}^n\phi^2_n(X_l)+
8|s|^3 k_2^{3/2}\sum_{l=1}^n\chi(|\phi_n(X_l)| \ge \epsilon b_n^{-1}|s|).
\end{split}
\end{equation}
Implementing Jensen's inequality, we get
\begin{equation}\label{4.11}
V_{2}= n^{-3}\biggl|\sum_{l=1}^n \phi_n(X_l)\biggr|^3 \le n^{-1}\sum_{l=1}^n|\phi_n(X_l)|^3
= n^{-1}V_1.
\end{equation}
Using (\ref{4.8})--(\ref{4.11}), we get
\begin{align}
 I_{1n} &\le {\mathbf E}_n\bigg[\exp\bigg\{b_n\sum_{l=1}^n t'\vec{f}_n(X_l) +
\frac{b_n^2}{2}(1-
 2C(s,k_2)\epsilon_n)\sum_{l=1}^n(\phi_n^2(X_l)\notag
 \\& -\frac{b_n^2}{2n}\bigg(\sum_{l=1}^n \phi_n(X_l)\bigg)^2
 +C(s,k_2)|s|^3\sum_{i=1}^n\chi(|\phi_n(X_l)| \ge \epsilon b_n^{-1}|s|))
 \bigg\}\bigg]\notag
 \\&
 \doteq {\mathbf E}_n [W_{n}]
 \label{4.12}
\end{align}
where $\epsilon=\epsilon_n \to 0$ as $n \to \infty$.
For all $r>0$, define the events
$$
A_{n} = A_{nr}\doteq
\{X_1,\ldots,X_n: s'\ov g_n - {\mathbf E}_n [s'g_n(X_1)] < rb_n\}.
$$
 Denote
$\ov A_{n}$ the complement of $ A_{n}$.

We have
\begin{equation*}
\wt I_n = {\mathbf E}_n[W_{n}\chi(A_n)] + {\mathbf E}_n[W_{n}\chi(\ov A_n)]\doteq U_{1n} + U_{2n}.
\end{equation*}
Let  $A_n$ holds. Then
$$
\frac{r^2b_n^4}{2n}\left(\sum_{l=1}^n \phi_n(X_l)\right)^2=\frac{nb_n^2}{2}(s'\ov g_n - {\mathbf E}_n[s'\vec{g}_n(X)])^2 <
\frac{nr^2b_n^4}{2}.
$$
Therefore we have
\begin{equation*}
\begin{split}
& \log[U_{1n}] \le \log {\mathbf E}_n\left[\exp\left\{b_n \sum_{l=1}^nt'\vec{f}_n(X_l) +
 \frac{b_n^2}{2}\sum_{l=1}^n\phi_n^2(X_l)(1+ 2C(s,k_2)\epsilon)\right.\right.
\\ &
\left.\left.+C(s,k_2)|s|^3\sum_{l=1}^n\chi(|\phi_n(X_l
)| \ge \epsilon b_n^{-1})+O(nr^2b_n^4)
\right\}\right]
\\ &
=
 n\log {\mathbf E}_n\left[\exp\left\{b_n t'\vec{f}_n(X_1) +
 \frac{b_n^2}{2}\phi_n^2(X_1)(1+ 2C(s,k_2)\epsilon)\right.\right.
 \\&
\left.\left.+C(s,k_2)|s|^3\chi(|\phi_n(X_1)| \ge \epsilon b_n^{-1})+O(r^2b_n^4)
\right\}\right].
\end{split}
\end{equation*}
Expanding in the Taylor series, we get
\begin{multline*}
\log U_{1n} \le n\log {\mathbf E}_n\left[1+b_n t'\vec{f}_n(X_1) + \frac{b_n^2}{2}(t'\vec{f}_n(X_1))^2\right.
\\
\left. +\frac{b_n^2}{2}\phi_n^2(X_1)(1+ 2C(s,k_2)\epsilon)+ C(s,t,k_1,k_2)\omega_n
+O(r^2b_n^4)\right]
\end{multline*}
with
\begin{align*}
\omega_n &= \omega_{1n} + \omega_{2n} +\omega_{3n} + \omega_{4n} + \omega_{5n},
\\
\omega_{1n}&= \frac{b_n^3}{6} |t'\vec{f}_n(X_1)|^3, \quad \omega_{2n} = 3\frac{b_n^3}{2}|t'\vec{f}_n(X_1)|\phi_n^2(X_1),
\quad
\omega_{3n} = \frac{b_n^4}{8} \phi_n^4(X_1),
\\
\omega_{4n}& =\frac{b_n^4}{12}(t'\vec{f}_n(X_1))^2\phi_n^2(X_1),
\quad
 \omega_{5n} =\chi(|\phi_n(X_1)| \ge \epsilon b_n^{-1}).
\end{align*}
We have
\begin{multline*}
\omega_{1n}\le b_n^3 |t'\vec{f}_n(X_1)|^3\chi(|t'\vec{f}_n(X_1)|< \epsilon b_n^{-1}) +
\chi(\epsilon b_n^{-1}<|t'\vec{f}_n(X_1)|< b_n^{-1})
\\
\doteq \omega_{1n1} + \omega_{1n2},
\end{multline*}
\begin{multline*}
 \omega_{2n} \le b_n^3|t'\vec{f}_n(X_1)|\phi_n^2(X_1)\chi(|t'\vec{f}_n(X_1)|< \epsilon b_n^{-1})
 \\
+C(s,t,k_1,k_2)\chi(\epsilon b_n^{-1}<|t'\vec{f}_n(X_1)|< b_n^{-1})\doteq \omega_{2n1} + \omega_{2n2},
\end{multline*}
\begin{multline*}
\omega_{3n} \le b_n^4 \phi_n^4(X_1)\chi(\phi_n(X_1) < \epsilon b_n^{-1}) + C\chi(\epsilon b_n^{-1}<
\phi_n(X_1) < c b_n^{-1})
\\
 \doteq \omega_{3n1} + \omega_{3n2},
\end{multline*}
\begin{multline*}
\omega_{4n} \le b_n^4
(t'\vec{f}_n(X_1))^2\phi_n^2(X_1)\chi(|t'\vec{f}_n(X_1)|< \epsilon
b_n^{-1})
\\
+
c\chi(\epsilon b_n^{-1}<|t'\vec{f}_n(X_1)|< b_n^{-1})\doteq
\omega_{4n1} + \omega_{4n2}.
\end{multline*}
Using (\ref{2.2}), we get
$$
{\mathbf E}_n[\omega_{1n1}] \le c\epsilon|t|b_n^2 {\mathbf E}_n[(t'\vec{f}_n(X_1))^2], \quad
{\mathbf E}_n[\omega_{2n1}] \le c\epsilon|t|b_n^2 {\mathbf E}_n [\phi_n^2(X_1)],
$$
$$
{\mathbf E}_n[\omega_{3n1}] \le c\epsilon^2|s|^2b_n^2 {\mathbf E}_n [\phi_n^2(X_1)],
\quad \quad
{\mathbf E}_n[\omega_{4n1}] \le c\epsilon^2|t|^2b_n^2 {\mathbf E}_n[\phi_n^2(X_1)]
$$
and
\begin{equation}\label{2.100}
{\mathbf E}_n[\omega_{5n}] \le\epsilon^{-2}b_n^2{\mathbf E}_n[\phi^2_n(X_1)
\chi(|\phi_n(X_i)|\ge\epsilon b_n^{-1})]= o(\epsilon^{-2}b_n^2),
\end{equation}
\begin{equation}\label{2.101}
\begin{split}&
{\mathbf E}_n[\chi(\epsilon b_n^{-1}<|t'\vec{f}_n(X_1)|< b_n^{-1})]\\
&
\le
\epsilon^{-2}b_n^2 {\mathbf E}_n[|t'\vec{f}_n(X_1)|^2\chi(\epsilon
b_n^{-1}<|t'\vec{f}_n(X_1)|)]= o(\epsilon^{-2}b_n^2)
\end{split}
\end{equation}
where the last inequalities in  (\ref{2.100}) and (\ref{2.101})
follows from A and (\ref{e1}), (\ref{e2}).

Hence, we get ${\mathbf E}_n[\omega_n] = o(b_n^2)$.
Therefore we have
\begin{equation*}
\log(U_{1n}) \le -\frac{nb_n^2}{2} \left(2\langle t'\vec{f},H\rangle - t'R_ft - s'R_g s\right)(1+O(1))\doteq v_n.
\end{equation*}
Implementing the Hoelder's inequality, we get
\begin{equation}\label{10.1}
U_{2n} \le \Big({\mathbf E}_n [W_n^{1+\delta}]\Big)^{\frac{1}{1+\delta}}({\mathbf P}(\ov A_n))^{\frac{\delta}{1+\delta}}.
\end{equation}
Using (\ref{4.12}), we get
\begin{multline*}
{\mathbf E}_n [W_n^{1+\delta}] \le {\mathbf E}_n\bigg[\exp\bigg\{(1+\delta)\bigg(b_n\sum_{i=1}^n t'\vec{f}_n(X_i)
\\
 +b_n^2\sum_{i=1}^n
\phi_n^2(X_i)(1 + 2C(s,k_2)\epsilon) 
+2 C(s,k_2)\sum_{i=1}^n \chi(\phi_n(X_i) > \epsilon b_n^{-1})\bigg)\bigg\}\bigg].
\end{multline*}
Hence, repeating the estimates for п $U_{1n}$, we get
\begin{multline}\label{7.101}
{\mathbf E}_n[W_n^{1+\delta}]
\\
 \le \exp \left\{-\frac{(1+\delta)nb_n^2}{2}
(2<t'\vec{f},H> - t'R_ft - s'R_g s)(1+O(1))\right\}.
\end{multline}
Note that (\ref{2.2}) and B1 implies (\ref{2.6}) and (\ref{2.6}) implies
$$
\lim_{n\to \infty} (nr^2b_n^2)^{-1}\log ( n{\mathbf P}(|f(X)| > rnb_n)) = - \infty
$$
for all $r>1$.

Hence, by Theorem 2.4 in \cite{ar}, we get
\begin{equation}\label{7.102}
\log {\mathbf P}_n(\ov A_n) \le -cr^2nb_n^2.
\end{equation}
By (\ref{10.1})--(\ref{7.102}), we get
\begin{equation*}
U_{2n} = o(U_{1n})
\end{equation*}
if $r$ is sufficiently large. This completes the proof of upper bound for $I_n$.
\end{proof}

The proof of lower bound is based on similar estimates. Denote
\begin{multline*}
B_n = \big\{x_1,\ldots,x_n: |f_{ni}(x_s)| < \epsilon b_n^{-1}, \ |g_{nj}(x_s)| < \epsilon b_n^{-1},\\
1 \le s \le n, \ 1 \le i \le k_1, \ 1 \le j \le k_2\big\}.
\end{multline*}
By (\ref{2.2}), (\ref{2.100}) and (\ref{2.101}), we get
\begin{multline*}
{\mathbf P}_n\big(|f_{ni}(X_1)| > \epsilon b_n^{-1}\big)\\
 < \epsilon^{-2}b_n^2 {\mathbf E}_n [f_{ni}^2(X_1)\chi(|f_{ni}(X_1)| > \epsilon b_n^{-1})]=
o(\epsilon^{-2}b_n^2).
\end{multline*} 

Estimating similarly ${\mathbf P}_n(|g_{ni}(X_1)| > \epsilon b_n^{-1})$, we get
\begin{multline*}
{\mathbf P}(B_n) = \prod_{i=1}^{k_1}(1 - {\mathbf P}(|f_{ni}(X_1)| > \epsilon b_n^{-1}))^n
\prod_{i=1}^{k_2}(1 - {\mathbf P}(|g_{ni}(X_1)| > \epsilon b_n^{-1}))^n
\\
=\exp\{-o(nb_n^2)\}.
\end{multline*}
Hence
$$
 I_n\ge {\mathbf E}_n \bigg[\exp\big\{ b_n \sum_{i=1}^n t'\vec{f}_n(X_i)\big\}
\Big(\frac{1}{n} \sum_{i=1}^n \exp\{b_ns'(\vec{g}_n(X_i) - \ov g_n)\}\Big)^n\chi(B_n)\bigg]
$$ 
$$
=\!
{\mathbf E}_n 
\bigg[\exp\big\{ b_n \sum_{i=1}^n t'\vec{f}_n(X_i)\big\}
\Big(\frac{1}{n} \sum_{i=1}^n \exp\{b_ns'(\vec{g}_n(X_i)\! -\! \ov g_n)\}\Big)^n\Big|\,B_n\bigg]
$$
$$
\times \exp\{-o(nb_n^2)\}
\doteq
I_{2n}\exp\{-o(nb_n^2)\}.
$$ 

Expanding in the Taylor series, we get
\begin{equation*}
\begin{split}
& I_{2n} \ge {\mathbf E}_n \Big[\exp \big\{ b_n \sum_{i=1}^n t'\vec{f}_n(X_i)\big\} \Big(1+
\frac{b_n^2}{2n} \sum_{i=1}^n(s'(\vec{g}_n(X_i) - \ov g_n))^2
\\&\quad
 -C(s,k_2)\frac{b_n^3}{n}
\sum_{i=1}^n |s'(\vec{g}_n(X_i) - \ov g_n)|^3\Big)^n\Big|B_n\Big]\ge
\\&\ge
{\mathbf E}_n \Big[\exp \big\{ b_n \sum_{i=1}^n t'\vec{f}_n(X_i)\big\}
\\&\quad \times\Big(1+
\frac{b_n^2}{2n}(1- 2\epsilon) \sum_{i=1}^n(s'(\vec{g}_n(X_i) - \ov g_n))^2 \Big)^n\Big|B_n\Big]
\doteq I_{3n}
\end{split}
\end{equation*}
where the last inequality follows from
$$
\sum_{i=1}^n|s'(\vec{g}_n(X_i) - \ov g_n)|^3 \le 2\epsilon b_n^{-1}
\sum_{i=1}^n (s'(\vec{g}_n(X_i) - \ov g_n))^2.
$$
Since $\log(1+x) \ge 1 + x-x^2$ for $x>0$, then

\begin{equation*}
\begin{split}&
I_{3n} =
{\mathbf E}_n \bigg[\exp \bigg\{ b_n \sum_{i=1}^n t'\vec{f}_n(X_i)\bigg\}\\& \quad\times \exp\bigg\{n\ln\bigg(1+
\frac{b_n^2}{2}(1- 2\epsilon) \sum_{i=1}^n(s'(\vec{g}_n(X_i) - \ov g_n))^2 \bigg)\bigg\}\bigg|B_n\bigg]
\\&
\ge {\mathbf E}_n \bigg[\exp \bigg\{ b_n \sum_{i=1}^n t'\vec{f}_n(X_i)+
\frac{b_n^2}{2}(1- 2\epsilon) \sum_{i=1}^n(s'(\vec{g}_n(X_i) - \ov g_n))^2
\\&
\quad-\frac{b_n^4}{4n}\bigg( \sum_{i=1}^n(s'(\vec{g}_n(X_i) - \ov g_n))^2\bigg)^2
 \bigg\}\bigg|B_n\bigg]
\\&
\ge
{\mathbf E}_n \bigg[\exp \bigg\{ b_n \sum_{i=1}^n
t'\vec{f}_n(X_i)
\\&
\quad+\frac{b_n^2}{2}(1- 2\epsilon-4\epsilon^2)
\sum_{i=1}^n(s'(\vec{g}_n(X_i) - \ov g_n))^2
 \bigg\}\bigg|B_n\bigg]\doteq I_{4n}
\end{split}
\end{equation*}
where the last inequality follows from
$$
\frac{b_n^4}{4n}\bigg( \sum_{i=1}^n(s'(\vec{g}_n(X_i) - \ov g_n))^2\bigg)^2\le
\epsilon^2b_n^2\sum_{i=1}^n(s'(\vec{g}_n(X_i) - \ov g_n))^2.
$$
Estimating similarly to the proof of upper bound, we get
\begin{multline*}
(nb_n^2)^{-1}\ln I_{4n}
\\
 =-\frac{nb_n^2}{2} \left(-2<t'\vec{f},H> - t'R_ft - (1-2\epsilon-2\epsilon^2)s'R_g s\right)(1+O(1)).
\end{multline*}
Since the choice of $\epsilon>0$ is arbitrary, this completes the proofs of lower bound and Lemma \ref{l4.5}.
\hfill{$\square$} 
\section{Proof of Theorem \ref{t2.4}}

It suffices to show that
\begin{equation}\label{5.0}
- \log {\mathbf P}\bigg(\sum_{i=1}^n Y^*_i > ne_n\bigg) = o(ne_n^2).
\end{equation}
Define the events $A_{ni} = U_{ni} \cup V_{ni}, 1 \le i \le n$, where
$$
U_{ni} = \{ Y_i: |Y_i| <b_n^{-1}\} \ \mbox{ and} \
V_{ni} =\{ Y_i: r_n < Y_i \}.
$$
Denote $A_n = \bigcap\limits_{i=1}^n A_{ni}$.
Using (\ref{2.2}), we get
\begin{equation}
\label{5.1}
 {\mathbf P}(A_n) > 1 - {\mathbf P}\Big(\max_{1\le i \le n} |Y_i| > b_n^{-1}\Big)>
1 - n{\mathbf P}(|Y_1| > b_n^{-1}) = 1 +o(1).
\end{equation}

Denote ${\mathbf P}_{cn}$ the conditional probability measure of  $Y_1$ given \break $Y_1 \in A_{n1}$.

Using (\ref{5.1}), we get
\begin{equation*}
\begin{split}
{\mathbf P}\bigg(\sum_{i=1}^n Y^*_i > ne_n\bigg)& \ge {\mathbf P}\bigg(\sum_{i=1}^n Y^*_i > ne_n|A_n\bigg){\mathbf P}(A_n)
\\&
=
{\mathbf P}_{cn}\bigg(\sum_{i=1}^n Y^*_i > ne_n\bigg)(1+o(1)).
\end{split}
\end{equation*}
Thus, it suffices to prove (\ref{5.0})
for the probability measures ${\mathbf P}_{cn}$ instead of ${\mathbf P}$.
Denote $p_n\! =\! {\mathbf P}_{cn} ( Y_1\! >\! r_{n})$. By (\ref{2.2}),
we get~$np_n \!\to\! 0$ as $n \to \infty$. Define the events
\smallskip

$W_n(k_n)\! =\! \{\,Y_1,\ldots,Y_n: n-k_n$ random variables $Y_1,\ldots,Y_n$ belong $(0,b_n^{-1})$ and
$k_n$ random variables $Y_1,\ldots,Y_n$ belong $(r_n,\infty)\,\}$.

\medskip
Suppose that $k=k_n \to \infty$
as $n \to \infty$ and
\begin{equation} \label{bbu}
\lim_{n\to \infty} k_n np_n = 0, \quad \lim_{n\to \infty}
(r_ne_n)^{-1} \log\frac{ne_n}{r_nk_n} = 0.
\end{equation}

Implementing the Stirling formula, we get
\begin{equation}\label{5.2}
\begin{split}
& v_n \doteq {\mathbf P}_{cn}(W_n(k))= \frac{n!}{(n-k)!k!} p_n^{k} (1-p_n)^{n-k}
\\&
=
(2\pi)^{-1/2}\exp\{(n + 1/2)\log n - (n-k+1/2)\log(n-k)
\\& \quad - (k+1/2)\log k
+
k\log p_n + (n-k)\log(1-p_n)\}(1 + o(1))
\\&
=
\exp\Big\{-(n-k+1/2)\log\frac{n-k}{n(1-p_n)} - k \log\frac{k}{np_n}(1 +o(1))\Big\}
\\&
=
\exp\{-n(1-k/n)(-k/n+p_n)(1+o(1))- k\log [k/(np_n)](1+o(1))\}
\\ &
=
\exp\{(k - np_n - k\log(k/(np_n))(1+o(1))\}
\\&
=
\exp\Big\{-k\log\frac{k}{np_n} (1 + o(1))\Big\}.
\end{split}
\end{equation}
It follows from (\ref{2.10}) and (\ref{5.2}) that we can choose $k=k_n$
such that
\begin{equation}\label{5.3}
 |\log v_n| = O(k_n |\log (np_n)|) = o(ne_n^2).
\end{equation}
Define the random variable $l_n$ which equals the number of $Y^*_i, 1  \le i \le n$ such that $Y_i^* \in (r_n,\infty)$. Denote
$u_n = c\frac{ne_n}{r_n} = c \frac{ne_n^2}{r_ne_n}$ с $c >1$
and put $m_n = [u_n]$. Suppose that $\frac{u_n}{k_n} \to \infty$
as $n \to \infty$. Then, estimating similarly to (\ref{5.2}), we get
\begin{equation}\label{bbq}
{\mathbf P}_c(l_n> u_n| W_n(k_n)) =
\exp\Big\{-u_n\log\frac{u_n}{k_n}(1 + o(1))\Big\}.
\end{equation}
Denote $c_1 = c -1$. Denote $Y^{1*}\le \ldots \le Y^{n*}$
the order statistis of $Y^*_1 , \ldots , Y^*_n$.

The event $\{Y^*_1,\ldots, Y_n^*: \sum\limits_{i=1}^n Y_i^* > ne_n\}$
contains the event
$$
U_n = \Big\{Y_1^*,\ldots,Y_n^*: \sum_{j=1}^{n-m_n} Y^{j*} >
-c_1ne_n, |Y^{j*}| < b_n^{-1},
$$
$$
1 \le j \le n - m_n, Y^{t*} > r_n, n - m_n < t \le n
\Big\},
$$
since, if $U_n$ holds, then we have
$$
\sum_{t=n - m_n -1}^n Y^{t*} > r_n m_n = cr_n\frac{ne_n}{r_n} =
cn e_n.
$$
Thus it suffices to prove that
\begin{equation} \label{ddu}
\log {\mathbf P}_c(U_n) = o(ne_n^2).
\end{equation}
We have
\begin{equation*}
\begin{split}&
{\mathbf P}_c(U_n) \!\ge\! {\mathbf P}_c(l_n =m_n) {\mathbf P}_c\Big( \sum_{i=1}^{n-m_n} Y_i^* \!>\! -c_1 n
e_n, |Y_i^*| \!<\! b_n^{-1}, 1 \!\le\! i\! \le\! n - m_n\Big)
 \\&\ge
{\mathbf P}_c(l_n =m_n|W_n(k_n))\\& \quad \times
{\mathbf P}_c(W_n(k_n)){\mathbf P}_c\Big( \sum_{i=1}^{n-m_n} Y_i^* > -c_1 n e_n, |Y_i^*|
< b_n^{-1}, 1 \le i\le n - m_n\Big) .
\end{split}
\end{equation*}
Denote $q_n = {\mathbf P}_c(|Y_1| < b_n^{-1})$.
Define the conditional probability measure ${\mathbf P}_{b_n}$
of random variable $Y_1$ given $|Y_1|
< b_n^{-1}$.

We have
\begin{equation}\label{ttu}
\begin{split}
{\mathbf P}_c(|Y_1^*| < b_n^{-1}) &= \sum_{i=1}^n \frac{n!}{(n-i)!i!}
q_n^i(1-q_n)^{n-i} \frac{i}{n} \\& = q_n \sum_{i=1}^n
\frac{(n-1)!}{(n-i)!(i-1)!} q_n^{i-1}(1-q_n)^{n-i} =q_n.
\end{split}
\end{equation}
We have
\begin{equation}\label{ddd}
\begin{split}&
 {\mathbf P}_c\Big( \sum_{i=1}^{n-m_n} Y_i^* > -c_1 n e_n|\, |Y_i^*|
< b_n^{-1}, 1 \le i \le n - m_n\Big)\\&\quad
= 1- {\mathbf P}_c\Big(
\sum_{i=1}^{n-m_n} Y_i^* < -c_1 n e_n|\, |Y_i^*| < b_n^{-1}, 1 \le i
\le n - m_n\Big).
 \end{split}
\end{equation}
By Chebyshev inequality, using (\ref{ttu}), we get {\allowdisplaybreaks
\begin{align}
 &
{\mathbf P}_c\Big( \sum_{i=1}^{n-m_n} Y_i^* < -c_1 n e_n|\, |Y_i^*| < b_n^{-1},
1 \le i\le n - m_n\Big) \notag
\\&\le
\frac{n-m_n}{c_1^2(n-m_n)
^2e_n^2} {\mathbf E}_c[{\rm Var}_{\widehat
{\mathbf P}_n}(Y^*_1|\,|Y^*_1| < b_n^{-1})]\notag
\\&
=\frac{q_n^2}{c_1^2(n-m_n)e_n^2} \sum_{t=0}^n C_n^t q_n^t (1-
q_n)^{n-t} {\mathbf E}_{b_n}\notag\\
&
\quad\times\bigg[(n-t)^{-1}\sum_{i=1}^{n-t}\bigg(Y_i -
(n-t)^{-1}\sum_{j=1}^{n-t} Y_j\bigg)^2\bigg] \label{dd2}
\\&=
\frac{q_n^2}{c_1^2(n-m_n)e_n^2} \sum_{t=0}^n
C_n^t q_n^t (1- q_n)^{n-t} \frac{t-1}{t} {\rm Var}_{b_n} [Y]\notag\\
& \le \frac{q_n^2}{c_1^2(n-m_n)e_n^2}
{\rm Var}_{b_n}[Y]\notag
\end{align}
}
and
\begin{equation}\label{dd3}
\lim_{n\to\infty} q_n^2 {\rm Var}_{b_n}[Y] = {\rm Var}\,[Y].
\end{equation}
Using (\ref{5.2}) and (\ref{bbq}), we get
\begin{equation}\label{5.6}
\begin{split}
&{\mathbf P}_c(l_n =m_n|W_n(k_n))
{\mathbf P}_c(W_n(k_n))
\\ &
\quad=\exp\left\{-\frac{cne_n^2}{r_ne_n}\log\frac{ne_n}{r_nk_n} -ck_n\log\frac{k_n}{np_n}(1+o(1))\right\}\\
&\quad= \exp\{-o(ne_n^2)\}
\end{split}
\end{equation}
where the last inequality follows from ~(\ref{bbu}),
(\ref{5.3}). Now
(\ref{ddu}) follows from (\ref{ddd}-\ref{5.6}). This completes the
proof of Theorem \ref{t2.4}. \hfill{$\square$}

\bigskip

Ermakov M. S. Large Deviation Principle for moderate deviation probabilities of empirical bootstrap measure.
\smallskip

We prove two Large deviations principles (LDP) in the
zone of moderate deviation probabilities. First we
establish LDP for the conditional distributions of
moderate deviations of empirical bootstrap measures
given empirical probability measures. Second we establish
LDP for the joint distributions of empirical measure and bootstrap
empirical  measures. Using these LDPs,
similar LDPs for statistical differentiable functionals
can be established. The LDPs
for moderate deviations of empirical quantile processes
and empirical bootstrap copula function are provided as illustration
of these results.

\end{document}